\newcommand{\inR}{\in \mathbb{R}}
\newcommand{\C}{ \mathbb{C}}
\newcommand{\R}{ \mathbb{R}}
\newcommand{\N}{ \mathbb{N}}
\newcommand{\eqdef}{\stackrel{\vartriangle}{=}}
\newcommand{\Top}{\mathsf{T}}
\newcommand{\Lop}{{\rm L}}
\newcommand{\dint}{{\rm d}}
\newcommand{\Fourier}{ \mathcal{F}} 
\newcommand{\bx}{{\boldsymbol x}}
\newcommand{\bw}{{\boldsymbol \omega}}
\newcommand{\bk}{{\boldsymbol k}}
\def\V#1{{\boldsymbol{#1}}}         
\def\Spc#1{{\mathcal{#1}}}  
\def\M#1{{\bf{#1}}}  
\def\Op#1{{\mathrm{#1}}}  
\def\ee{\mathrm{e}} 
\def\jj{\mathrm{j}}
\def\Proj{\mathrm{Proj}} 
\def\Identity{\mathrm{Id}} %
\newcommand{\embedC}{\xhookrightarrow{}}
\newcommand{\embedD}{\xhookrightarrow{\mbox{\tiny \rm d.}}}
\newcommand{\embedIso}{\xhookrightarrow{\mbox{\tiny \rm iso.}}}
\renewcommand{\[}{\begin{equation}}
\renewcommand{\]}[1]{\label{eq:#1}\end{equation}}
\title{Ridges, Neural Networks, and the Radon Transform
}
\author{\name {Michael Unser} \email {michael.unser@epfl.ch}\\
\addr Biomedical Imaging Group\\ \'Ecole polytechnique f\'ed\'erale de Lausanne (EPFL)\\ CH-1015 Lausanne, Switzerland
}
\begin{document}

\maketitle




%
\begin{abstract}
A ridge is a function that is characterized by a one-dimensional profile (activation)
and a multidimensional direction vector. Ridges appear in the theory of neural networks as functional descriptors of the effect of a neuron, with the direction vector being encoded in the linear weights. In this paper, we investigate 
properties of the Radon transform in relation to ridges and to the characterization of neural networks. We introduce a broad category of hyper-spherical Banach subspaces (including the relevant subspace of measures) over which the back-projection operator is 
invertible. 
We also give conditions under which the back-projection operator is extendable to the full parent space with its null space being 
identifiable as a Banach complement. 
Starting from first principles, we then characterize the sampling functionals that are in the range of the filtered Radon transform. 
Next, we extend the definition of ridges for any distributional profile 
and determine their (filtered) Radon transform in full generality.
Finally, we apply our formalism to clarify and simplify some of the results and proofs on the optimality of ReLU networks that have appeared in the literature.
\end{abstract}

\tableofcontents

\section{Introduction}
A ridge is a multidimensional function $\V x \mapsto r(\V w^\Top \V x)$ from $\R^d \to \R$ that is characterized by a 1D profile $r: \R \to \R$ and a weight vector $\V w \in \R^d \backslash\{\V 0\}$ \citep{Pinkus2015}. Ridges are ubiquitous in mathematics and engineering. Most significantly, the elementary unit (neuron) in a neural network is a function of the form $f_k(\V x)=\sigma(\V w_k^\Top \V x -t_k)$, which is a ridge with a shifted profile $r=\sigma(\cdot-t_k)$, where $\sigma: \R \to \R$ is the activation function and where $t_k\in \R$ (bias) and $\V w_k \in \R^d$ (linear weights) are the trainable parameters of the $k$th neuron \citep{Bishop2006}. Variants of the universal-approximation theorem ensure that any continuous function can be approximated as closely as desired by a weighted sum of ridges with a fixed activation under 
mild conditions on $\sigma$ \citep{Cybenko1989,Hornik1989,Barron1993}.

Ridges are also intimately tied to the Radon transform \citep{Logan1975,Madych1990d} under the condition that the weight vector $\V w$ has a unit norm, so that $\V w \in  \mathbb{S}^{d-1}$ where $\mathbb{S}^{d-1}$ is the unit sphere in $\R^d$ whose generic elements will be denoted by $\V \xi$. This connection is exploited in the ridgelet transform, which provides a wavelet-like representation of functions where the basis elements are ridges \citep{Murata1996,Rubin1998,Candes1999_ridgelets,Candes1999,Kostadinova2014}. 
The expansion of a function in terms of ridgelets is a precusor to sparse signal approximation. There, the idea is to represent a function by a linear combination of a small number of atoms taken within a dictionary \citep{Elad2010b,Foucart2013}. This paradigm, which is the basis for compressed sensing \citep{Donoho2006, Candes2007}, has been adapted to shallow neural networks 
by considering a dictionary that consists of a continuum of neurons. Mathematically, this can be implemented through the integral representation 
(infinite-width neural network) 
\begin{align}
\label{Eq:IntegralNeural}
f(\V x)&=\int_{\R \times \mathbb{S}^{d-1}}\sigma(\V \xi^\Top \V x -t) \dint \mu(t,\V \xi),
\end{align} where $\mu$ is a measure
on $\R \times \mathbb{S}^{d-1}$ (hyper-spherical domain). This model is fitted to data subject to a penalty on the total-variation norm of $\mu$ \citep{Bach2017}. Remarkably, this infinite-dimensional convex optimization problem results in sparse minimizers of the form $\mu=\sum_{k=1}^K a_k\delta_{\V z_k}$ with $\V z_k=(t_k,\V \xi_k) \in \R \times \mathbb{S}^{d-1}$, which then map into standard two-layer neural networks \citep{Bach2017}. 
Interestingly, we can relate
 \eqref{Eq:IntegralNeural}  to the Radon transform by identifying $\mu$ as the (generalized) function $g_\mu$ (with $\dint \mu(t,\V \xi)=g_\mu(t,\V \xi)\dint t\dint \V \xi$) and by rewriting the integral as
\begin{align}
f(\V x)&=\int_{ \mathbb{S}^{d-1}} \left( \int_{\R} \sigma(\V \xi^\Top \V x -t) g_\mu(t,\V \xi) \dint t \right)\dint \V \xi
= \Op R^\ast\Op L_{\rm rad}\{ g_\mu\}(\V x),
\label{Eq:IntegralNeural3}
\end{align}
where $\Op R^\ast$ (the adjoint of the Radon transform) is the  back-projection operator of computer tomography \citep{Natterer1984}. Our ``radial'' operator $\Op L_{\rm rad}: g_\mu \mapsto \sigma \ast_{t} g_\mu$ on the right-hand side of \eqref{Eq:IntegralNeural3}  implements the Radon-domain convolution with $\sigma$ 
along the variable $t$.

While the synthesis approach to the learning problem proposed by \citet{Bach2017} is insightful, there is a strong incentive to make the connection with regularization theory in direct analogy with the classical theory of learning that relies on reproducing-kernel Hilbert spaces \citep{Wahba1990,Poggio1990,Scholkopf1997,Scholkopf2001,Alvarez2012,Unser_2020}. This is feasible provided that the linear relation between $f$ and $g_\mu$ expressed by \eqref{Eq:IntegralNeural3} be one-to-one. This requires that the operators $\Op L_{\rm rad}$ and $\Op R^\ast$ in \eqref{Eq:IntegralNeural3}
be both  invertible. 
\citet{Ongie2020b} made an important step in that direction by  showing that ReLU networks are minimizers of a Radon-domain total-variation norm that involves the 
 Laplacian of $f$. Their optimality result was then generalized by \citet{Parhi2021b}
who considered a broader class of differential operators inspired by spline theory \citep{Unser2017}. The leading idea there is that the operator $\Op L_{\rm rad}$ in \eqref{Eq:IntegralNeural3} should implement some variant of an $n$th-order integrator, with $\sigma={\rm ReLU}$ being the solution for $n=2$. Such an $\Op L_{\rm rad}$ can be formally inverted by applying an $n$th-order partial derivative (e.g., $\Op L^{-1}_{\rm rad}=\partial^n_t$), which motivates the use of the latter as (filtered) Radon-domain regularization operator. 

The proposed spline-based approach to the inversion of \eqref{Eq:IntegralNeural3} is elegant and intuitively appealing.
However, the formulation and resolution of the corresponding optimization problem requires special care because the underlying function spaces have a nontrivial kernel (null space) that needs to be factored out. The latter statement applies not only to the regularization operator (e.g., Laplacian and/or Radon-domain radial derivatives) but also to $\Op R^\ast$, which is an aspect that has been overlooked.
While there is a rich theory on the invertibility of the Radon transform \citep{Helgason2011,Rubin1998,Boman2009,Ramm2020}, there are comparatively fewer---and not as strong---results on the invertibility of $\Op R^\ast$, the problem being that this operator has a huge null space \citep{Ludwig1966}. The primary spaces on which $\Op R^\ast$ is known to be injective, and hence invertible, are
\begin{itemize}
\item $\Spc S(\mathbb{P}^d)$ (the even part of Schwartz' hyper-spherical---or Radon-domain---test functions) \citep{Solmon1987};
\item $L_{\infty, c}(\mathbb{P}^d)$ (the bounded even functions of compact support) \citep{Ramm1996};  
\item $\Spc S'_{\rm Liz}(\mathbb{P}^d)$ (the even Lizorkin distributions) \citep{Kostadinova2014}.
\end{itemize}
The space $\Spc S'_{\rm Liz}(\R^d)$ of Lizorkin distributions, which is the topological dual of $\Spc S_{\rm Liz}(\R^d)$ (the subspace of Schwartz functions that are orthogonal to all polynomials), is especially attractive in that context. Indeed, the Radon transform being an homeomorphism from $\Spc S'_{\rm Liz}(\R^d)$
onto $\Spc S'_{\rm Liz}(\mathbb{P}^d)$, the inversion process is straightforward \citep{Kostadinova2014}. Lizorkin distributions also interact very nicely with the Laplace operator, which makes then well suited to the investigation of fractional integrals \citep{Samko1993} and of wavelets \citep{Saneva_2010}.
The Lizorkin framework, however, has one basic limitation. The underlying objects---Lirzorkin distributions---are abstract entities, with $\Spc S'_{\rm Liz}(\R^d)$ being isomorphic to the quotient space $\Spc S'(\R^d)/\Spc P$, where $\Spc S'(\R^d)$ and $\Spc P$ are the spaces of tempered distributions and polynomials, respectively. Thus, Lizorkin distributions are generally identifiable only modulo some polynomial.
Fortunately, this is not a problem when dealing with ordinary functions $f \in L_p(\R^d)$ since $L_p(\R^d)$ is continuously embedded in $\Spc S'_{\rm Liz}(\R^d)$ for any $p>1$ \citep{Samko1982denseness}. This implies that the Lizorkin distribution $f + \Spc P \in \Spc S'_{\rm Liz}(\R^d)$ has a unique ``concrete'' representer $f \in L_p(\R^d)$, which amounts to simply setting the polynomial to zero.
The situation, however, is not as clearcut for functions and ridge profiles that exhibit polynomial growth at infinity. To offer  insights on the nature of the problem, let us consider three distinct neuronal units $f_1(x)=(x-t_k)_+$ (ReLU activation), $f_2(x)=\tfrac{1}{2}|x-t_k|$, and $f_3(x)=x+(x-t_k)_+$ (ReLU with skip connection), which are all valid representers of the same Lizorkin distribution $f=f_i+\Spc P \in \Spc S'_{\rm Liz}(\R)$ for $i=1,2,3$ (since the $f_i$'s only differ by a first-order polynomial). Suppose that a theoretical argument can be made concerning the optimality of the announced $f \in \Spc S'_{\rm Liz}(\R)$. The practical difficulty then is to map this result into a concrete architecture. Should the choice be one of the $f_i$, if any? The least we can say is that the convenient rule of ``setting the polynomial to zero'' is not applicable here because it is unclear what the underlying polynomial truly is. This intrinsic ambiguity jeopardizes some of the conclusions regarding the connection between ReLU neural networks, ridge splines, and the Radon transform that have been reported in the literature \citep{Sonoda2017,Parhi2021b}. We are in the opinion that adjustments are needed.

In this paper, we revisit the topic and extend the existing formulation so that it can handle arbitrary ridge profiles, without (polynomial) ambiguity. Our four primary contributions are as follows.

\begin{itemize}
\item A detailed investigation of the invertibility of the back-projection operator for a broad family of Radon-compatible Banach subspaces $\Spc X'_{\rm Rad}\subset \Spc X'$, where $\Spc X'$ is the topological dual of some generic hyper-spherical parent space $\Spc X$. 

\item A constructive characterization of the extreme points of 
the space of Radon-compatible hyper-spherical measures.

\item The extension of ridges to distributional profiles $r \in \Spc S'(\R)$ 
and the determination of their Radon transform.

\item The application of the formalism to the investigation of a functional-optimization problem that results in solutions that are parameterized by ReLU neural networks. Our contribution there is to clarify the analysis of \citet{Parhi2021b} and to provide a characterization of the full solution set.

\end{itemize}

The paper is organized as follows: We start with notations and mathematical preliminaries in Section 2. In particular, we recall the main properties of the classical Radon transform and its adjoint, and show how to extend them to tempered distributions by duality.
In Section \ref{Sec:back-projection}, we develop a formulation that leads to the identification of a generic family of Radon-compatible Banach spaces over which the back-projection operator is invertible. Our results are summarized in Theorem \ref{Theo:Complementedspaces}, which can be viewed as the Banach counterpart of the classical result for tempered distributions \citep{Ludwig1966}.
In Section \ref{Sec:RadonDirac}, we use our 
framework to characterize the sampling functionals (Radon-compatible Diracs) that are in the range of the filtered Radon transform 
(Theorem \ref{Theo:Extreme0}). 
In Section \ref{Sec:Ridges}, we introduce a general definition of a ridge with an arbitrary distributional profile 
and derive its
(filtered) Radon transform. Finally, in Section \ref{Sec:RadonRegul}, we apply our formalism to the resolution of a multidimensional supervised-learning problem with a 2nd-order Radon-domain regularization formulated by \citet{Parhi2021b}, the outcome being Theorem \ref{Theo:ReLUSplines} on the optimality of ReLU networks.

\section{Mathematical Preliminaries}
\subsection{Notations}
\label{Sec:Notations}
We shall consider multidimensional functions $f$ on $\R^d$ that are indexed by the variable $\V x \in \R^d$. To describe their partial derivatives, we use the multi-index 
$\V k=(k_1,\dots,k_d) \in \N^d$ with the notational conventions $\V k!\eqdef\prod_{i=1}^d k_i!$, $|\V k|\eqdef k_1+\cdots+k_d$,
$\V x^{\V k}\eqdef\prod_{i=1}^d x_i^{k_i}$ for any $\V x \in \R^d$, and $\partial^\V k f(\V x)\eqdef\frac{\partial^{|\V k|}f(x_1,\dots,x_d)}{\partial^{k_1}_{x_1} \cdots \partial^{k_d}_{x_d}}$. This allows us to write the multidimensional Taylor expansion around $\V x_0$ of an analytical function $f: \R^d \to \R$ explicitly as
\begin{align}
\label{Eq:Taylor}
f(\V x)=\sum_{n=0}^\infty \sum_{|\V k|=n} \frac{\partial^\V k f(\V x_0) (\V x- \V x_0)^\V k}{\V k! },
\end{align}
where the internal summation is over all multi-indices $\V k=(k_1,\dots,k_d)$ such that $k_1+\cdots+k_d=n$.

Schwartz' space of smooth and rapidly decreasing test functions $\varphi: \R^d \to \R$ equipped with the usual Fr\'echet-Schwartz topology is denoted by $\Spc S(\R^d)$. Its continuous dual is the space $\Spc S'(\R^d)$ of tempered distributions. In this setting, the Lebesgue spaces $L_p(\R^d)$ for $p\in[1,\infty)$ can be specified as the completion of 
$\Spc S(\R^d)$ equipped with the $L_p$-norm $\|\cdot\|_{L_p}$; that is, $L_p(\R^d)=\overline{(\Spc S(\R^d),\|\cdot\|_{L_p})}$.
For the end point $p=\infty$, we have that $\overline{(\Spc S(\R^d),\|\cdot\|_{L_\infty})}=C_0(\R^d)$ with $\|\varphi\|_{L_\infty}=\sup_{\V x \in \R^d}|\varphi(\V x)|$, where $C_0(\R^d)$ is the space of continuous functions that vanish at infinity. Its continuous dual is the space $\Spc M(\R^d)=\{f \in \Spc S'(\R^d): \|f\|_{\Spc M}<\infty\}$ of bounded 
Radon measures with
$$
\|f\|_{\Spc M}=\sup_{\varphi \in \Spc S(\R^d): \|\varphi\|_{L_\infty}\le 1} \langle f, \varphi \rangle.
$$
The latter is a superset of $L_1(\R^d)$, which is isometrically embedded in it, meaning that  $\|f\|_{L_1}=
\|f\|_{\Spc M}$ for all $f \in L_1(\R^d)$.

The 
Fourier transform of a function $\varphi \in L_1(\R^d)$ is defined as
\begin{align}
\widehat \varphi(\bw)\eqdef \Fourier\{\varphi\}(\bw)=\frac{1}{(2 \pi)^d} \int_{\R^d} \varphi(\V x) \ee^{-\jj \langle \bw, \V x \rangle} \dint \V x.\label{Eq:Fourier}
\end{align}
Since the Fourier operator $\Fourier$ continuously maps $\Spc S(\R^d)$ into itself, the transform can be extended by duality to the whole space $\Spc S'(\R^d)$ of tempered distribution. Specifically, $\widehat f=\Fourier\{f\} \in \Spc S'(\R^d)$ is the (unique) {\em generalized Fourier transform} of $f\in \Spc S'(\R^d)$ if and only if $\langle \widehat f, \varphi \rangle=\langle f, \widehat\varphi \rangle$ for all $\varphi  \in \Spc S(\R^d)$, where $\widehat\varphi=\Fourier\{\varphi\}$  is the ``classical'' Fourier transform of $\varphi$ defined by \eqref{Eq:Fourier}.

\subsection{Polynomial Spaces and Related Projectors}
\label{Sec:Null}
The regularization operators (e.g., the Laplacian) that are of interest to us are isotropic and have a growth-restricted null space formed by the space $\Pi_{n_0}$ of polynomials of degree $n_0$.
Here, we choose to expand these polynomials 
in the monomial/Taylor basis
\begin{align}
\label{Eq:TaylorBasis}
m_\V k(\V x)\eqdef\frac{\V x^{\V k}}{\V k!}
\end{align}
with $|\bk|\le n_0$. Defining the  $\ell_2$-norm of the Taylor coefficients of the polynomial
$p_0=\sum_{|\V k|\le n_0} b_\V k m_{\V k}$ as
\begin{align}
\|p_0\|_{\Spc P}\eqdef \|(b_\V k)_{|\V k|\le n_0}\|_2,
\end{align}
we add a topological structure that results in the description
\begin{align}
\label{Eq:PolNullspace}
\Spc P_{n_0}=\{p_0\in \Pi_{n_0}: \|p_0\|_{\Spc P}<\infty\}.
\end{align}
The important point here is that  \eqref{Eq:PolNullspace} specifies a finite-dimensional Banach subspace of $\Spc S'(\R^d)$. Its continuous dual $\Spc P'_{n_0}$ is finite-dimensional as well, although it is composed of ``abstract'' elements 
that are, in fact, equivalence classes in $\Spc S'(\R^d)$. Yet, it possible to
 identify every dual element $p_0^\ast\in \Spc P'_{n_0}$ concretely as a function by selecting a particular dual basis $\{m_{\V k}^\ast\}_{|\V k|\le n_0}$ such that $\langle m^\ast_{\V k}, m_{\V k'}\rangle=\delta_{\V k-\V k'}$ (Kroneker delta). Our specific choice is 
 \begin{align}
 \label{Eq:Dualbasis}
m^\ast_\V k=(-1)^{|\V k|}\partial^{\V k}\kappa_{\rm iso} \in \Spc S(\R^d)
 \end{align}
 with $\V k\in \N^d$,  where $\kappa_{\rm iso}$ is the isotropic function described below. 
 
\begin{lemma}
\label{Theo:IsotropicWindow}
There exists an entire isotropic function $\kappa_{\rm iso} \in \Spc S(\R^d)$ with $0\le \widehat{\kappa}_{\rm iso}(\bw)\le 1$ and $\widehat{\kappa}_{\rm iso}(\bw)=0$ for $\|\bw\|\ge 1$
such that
\begin{align}
\int_{\R^d} \frac{\V x^{\V k}}{\V k!} (-1)^{|\V n|}\partial^{\V n}\kappa_{\rm iso}(\V x)\dint \V x=
\; \delta_{\V k -\V n} \end{align} for all $\V k, \V n \in \N^d$.
\end{lemma}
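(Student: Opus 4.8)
The plan is to reduce the stated biorthogonality relation to a shift-independent family of moment conditions on $\kappa_{\rm iso}$, recast those as flatness conditions on $\widehat\kappa_{\rm iso}$ at the origin, and then build $\widehat\kappa_{\rm iso}$ explicitly as a radial bump.

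First I would take a tentative $\kappa_{\rm iso}\in\Spc S(\R^d)$ and integrate by parts $|\V n|$ times in the defining integral. Since $\kappa_{\rm iso}$ and all its derivatives are rapidly decreasing, no boundary terms survive and the two factors $(-1)^{|\V n|}$ cancel, so that
\begin{align}
\int_{\R^d}\frac{\V x^{\V k}}{\V k!}(-1)^{|\V n|}\partial^{\V n}\kappa_{\rm iso}(\V x)\dint\V x
=\frac{1}{\V k!}\int_{\R^d}\partial^{\V n}(\V x^{\V k})\,\kappa_{\rm iso}(\V x)\dint\V x.
\end{align}
Because $\partial^{\V n}\V x^{\V k}=\frac{\V k!}{(\V k-\V n)!}\V x^{\V k-\V n}$ when $\V k\ge\V n$ (meaning $k_i\ge n_i$ for all $i$) and vanishes otherwise, the right-hand side equals $\frac{1}{(\V k-\V n)!}\int_{\R^d}\V x^{\V k-\V n}\kappa_{\rm iso}(\V x)\dint\V x$ for $\V k\ge\V n$ and $0$ otherwise. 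As every multi-index can be written as $\V k-\V n$ with $\V k\ge\V n$, the identity $\delta_{\V k-\V n}$ holds for all $\V k,\V n$ if and only if $\kappa_{\rm iso}$ obeys the single family of moment conditions $\int_{\R^d}\V x^{\V m}\kappa_{\rm iso}(\V x)\dint\V x=\delta_{\V m}$ for every $\V m\in\N^d$ (the cases $\V k\not\ge\V n$ being automatically consistent with $\delta_{\V k-\V n}=0$).

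Next I would pass to the Fourier domain. Differentiating the definition \eqref{Eq:Fourier} under the integral sign shows that $\partial^{\V m}\widehat\kappa_{\rm iso}(\V 0)$ is a nonzero constant multiple of $\int_{\R^d}\V x^{\V m}\kappa_{\rm iso}(\V x)\dint\V x$, so the moment conditions are equivalent to the flatness requirements $\widehat\kappa_{\rm iso}(\V 0)=(2\pi)^{-d}$ and $\partial^{\V m}\widehat\kappa_{\rm iso}(\V 0)=0$ for all $|\V m|\ge 1$. I would then construct $\widehat\kappa_{\rm iso}$ directly as a radial profile $\widehat\kappa_{\rm iso}(\bw)=\Phi(\|\bw\|)$, where $\Phi$ is a $C^\infty$ function that equals the constant $(2\pi)^{-d}$ on a ball $\|\bw\|\le\rho$ (for some $\rho<1$), decreases monotonically, and vanishes for $\|\bw\|\ge 1$, with $0\le\Phi\le(2\pi)^{-d}\le 1$. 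Being constant near the origin, $\widehat\kappa_{\rm iso}$ has vanishing derivatives of all positive orders there, so the flatness conditions are met; the range and support conditions hold by construction; and the radial profile is isotropic. Setting $\kappa_{\rm iso}=\Fourier^{-1}\{\widehat\kappa_{\rm iso}\}$, the compact support and smoothness of $\widehat\kappa_{\rm iso}$ yield, via Paley--Wiener, that $\kappa_{\rm iso}$ extends to an entire function and belongs to $\Spc S(\R^d)$, while the even, real, radial spectrum makes $\kappa_{\rm iso}$ real-valued and isotropic.

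I expect the only genuinely delicate point to be the bookkeeping of the first two steps: verifying that the entire infinite array of biorthogonality constraints collapses onto the single, shift-independent moment family, and recognizing these as exactly the Fourier flatness (Strang--Fix-type) conditions. Once that reduction is secured, the existence of the window reduces to a routine bump-function construction, and the regularity of $\kappa_{\rm iso}$ follows from standard Paley--Wiener arguments.
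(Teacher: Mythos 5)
Your proof is correct and follows essentially the same route as the paper's: integration by parts to collapse the biorthogonality array onto the moment conditions $\int_{\R^d}\V x^{\V m}\kappa_{\rm iso}(\V x)\dint\V x=\delta_{\V m}$, translation of these into flatness of $\widehat{\kappa}_{\rm iso}$ at the origin, construction of a radial, compactly supported window that is constant near $\bw=\V 0$, and Paley--Wiener for entireness. The only (cosmetic) difference is the height of the plateau: you take $(2\pi)^{-d}$, which is the value consistent with the forward-transform normalization stated in \eqref{Eq:Fourier}, whereas the paper sets $\widehat{\kappa}_{\rm rad}=1$ near the origin (consistent with the convention implicitly used in its own Fourier-domain evaluation); either choice meets the lemma's requirement $0\le\widehat{\kappa}_{\rm iso}\le 1$.
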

\begin{proof}
We take $\kappa_{\rm iso}=\Fourier^{-1}\{\widehat \kappa_{\rm rad}(\|\cdot\|)\}$, where the radial profile $\widehat \kappa_{\rm rad}: \R \to \R$  is such that $\widehat \kappa_{\rm rad} \in \Spc S(\R)$,
$\widehat \kappa_{\rm rad}(\omega)=1$ for $0\le |\omega| \le R_0\le \tfrac{1}{2}$, and $\widehat \kappa_{\rm rad}(\omega)=0$ for $|\omega|\ge1$. A particular construction with $R_0=\tfrac{1}{2}$ is
$\widehat \kappa_{\rm rad}={\rm rect} \ast \varphi$, where $\varphi \in \Spc S(\R)$ is a symmetric, non-negative test function (to avoid oscillations) with
${\rm support}(\varphi) \subseteq  [-\tfrac{1}{2},\tfrac{1}{2}]$ and $\int_\R \varphi(x)\dint x=1$. Next, we observe that
\begin{align}
\langle m_{\V k},(-1)^{|\V n|}\partial^{\V n}\kappa_{\rm iso} \rangle=\langle \partial^{\V n}m_{\V k},\kappa_{\rm iso} \rangle=
\begin{cases}
\langle m_{\V k-\V n},\kappa_{\rm iso} \rangle,& \V k-\V n\ge \V 0\\
\langle 0,\kappa_{\rm iso} \rangle=0, & \mbox{otherwise.}
\end{cases}
\end{align}
We evaluate the duality product for the case $\V m=(\V k- \V n) \ge \V 0$ in the Fourier domain  as
\begin{align}
\langle m_{\V m},\kappa_{\rm iso} \rangle&=\frac{1}{(2 \pi)^d} \langle  \Fourier\{ m_{\V m}\},\widehat \kappa_{\rm iso} \rangle \nonumber\\
&=\langle  \frac{\jj^{|\V m|}}{\V m!}\delta^{(\V m)},\widehat \kappa_{\rm iso}\rangle=\frac{-\jj^{|\V m|}}{\V m!} \partial^{\V m} \widehat \kappa_{\rm iso}(\V 0)=\begin{cases}
1,& \V m= \V 0\\
0, & \mbox{otherwise},
\end{cases}
\end{align}
where we have used the relation $\Fourier\{\V x^{\V m}\}(\bw)=(2\pi)^d\; \jj^{|\V m|} \delta^{(\V m)}(\bw)$. Finally, since $\widehat \kappa_{\rm iso}$ is compactly supported, its inverse Fourier transform $\kappa_{\rm iso}$ is an entire function of exponential-type (by the Paley-Wiener theorem). This means that the function $\V x \mapsto \kappa_{\rm iso}(\V x)$ is analytic
with a convergent Taylor series of the form \eqref{Eq:Taylor} for any $\V x, \V x_0 \in \R^d$.
\end{proof}
This allows us to describe the dual space explicitly as
\begin{align}
\label{Eq:DualNullspace}
\Spc P'_{n_0}=\{p_0^\ast=\sum_{|\V k|\le n_0} b^\ast_\V k m^\ast_{\V k}: \|p_0^\ast\|_{\Spc P'}\eqdef \|(b^\ast_\V k)\|_2<\infty\}
\end{align}
where each elements $p_0^\ast$ has a unique representation in terms of its coefficients $(b^\ast_\V k)_{|\V k|\le n_0}$.
We can 
use the dual basis $\{m_\V k^\ast\}$ to specify the projection operator
$\Proj_{\Spc P_{n_0}}: \Spc S'(\R^d) \to \Spc P_{n_0}$ as
\begin{align}
\label{Eq:ProjP}
\Proj_{\Spc P_{n_0}}\{f\}&=\sum_{|\V k|\le n_0} \langle f,m_\V k^\ast  \rangle \; m_\V k,
\end{align}
which is well-defined for any $f \in \Spc S'(\R^d)$ since $m_\V k^\ast \in \Spc S(\R^d)$.

  \subsection{Radon Transform}
 The Radon transform integrates of a function of $\R^d$ over all 
hyperplanes of dimension $(d-1)$. These hyperplanes are indexed over
$\R \times \mathbb{S}^{d-1}$, where $\mathbb{S}^{d-1}=\{\V \xi \in \R^d: \|\V \xi\|_2=1 \}$ is the unit sphere in $\R^d$.
Specifically, the coordinates $\V x$ of a hyperplane associated with an offset $t\in\R$ and a normal vector $\V \xi \in \mathbb{S}^{d-1}$ satisfy 
\begin{align}
\V \xi^\Top \V x=\xi_1x_1+ \dots + \xi_d x_d = t.
\end{align}
The transform is first described for ordinary (test) functions and then extended to tempered distributions by duality.

\subsubsection{Classical Integral Formulation}
The Radon transform of the function $f\in L_1(\R^d)$ is defined as
\begin{align}
\Op R\{ f\}(t, \V \xi)
&=\int_{\R^d}\delta(t-\V \xi^\Top\V x)  f(\V x) \dint \V x,\quad (t,\V \xi) \in \R \times \mathbb{S}^{d-1}. \label{Eq:Radon2}
\end{align}
The adjoint of $\Op R$ is the back-projection operator $\Op R^\ast$.
Its action on $g: \R \times \mathbb{S}^{d-1} \to \R$ yields the function
\begin{align}
\Op R^\ast \{g\}(\V x)=\int_{\mathbb{S}^{d-1}} g(\underbrace{\V \xi^\Top\V x}_{t}, \V \xi)\dint \V \xi, \quad\V x\in \R^d.
\label{Eq:back-projection}
\end{align}
Given the $d$-dimensional Fourier transform $\hat f=\Fourier\{f\}$ 
of  $f \in L_1(\R^d)$, one can calculate $ \Op R \{f\}(\cdot,\V \xi_0)$ for any fixed  $\V \xi_0 \in \mathbb{S}^{d-1}$ through
\begin{align}
\Op R\{ f\}(t, \V \xi_0)=\frac{1}{2 \pi} \int_{\R} \hat f(\omega\V \xi_0) \ee^{\jj \omega t} \dint \omega= \Fourier^{-1}_{\omega \to t} \{  \hat f(\omega\V \xi_0) \}\{t\}.
\label{Eq:CentralSliceTheo}
\end{align}
In other words, the restriction of $\hat f: \R^d \to \C$ along the ray $\{\bw=\omega \V \xi_0: \omega \in \R\}$ is equal to the 1D Fourier transform of $\Op R\{ f\}(\cdot, \V \xi_0)$, a property that is referred to as the {\em Fourier slice theorem}.

%

To describe the functional properties of the Radon transform, one needs the 
 (hyper)spherical (or Radon-domain) counterparts of the spaces described in Section \ref{Sec:Notations} where the Euclidean indexing with $\V x \in \R^d$ is replaced by $(t, \V \xi) \in \R \times  \mathbb{S}^{d-1}$.
The spherical counterpart of $\Spc S(\R^d)$ is $\Spc S(\R \times \mathbb{S}^{d-1})$. Correspondingly, an element $g \in \Spc S'(\R \times \mathbb{S}^{d-1})$ is a continuous linear functional on $\Spc S(\R \times \mathbb{S}^{d-1})$ whose action on the test function $\phi(t,\V \xi)$ is represented by the duality product $g: \phi \mapsto \langle g,\phi \rangle_{\rm Rad}$. When $g$ can be identified as an ordinary function $g: (t,\V \xi) \mapsto \R$, one has that
\begin{align}
\langle g,\phi\rangle_{\rm Rad} = \int_{\mathbb{S}^{d-1}} \int_{\R} g(t, \V \xi) \phi(t, \V \xi) \dint t \dint \V \xi,
\end{align}
where $\dint \V \xi$ stands for a surface element on $\mathbb{S}^{d-1}$ with $\|\V \xi\|_2=1$. For instance, for $d=2$, we 
parameterize $\mathbb{S}^{1}$ 
by setting $\V \xi=(\cos \theta, \sin \theta)$ with $\dint \V \xi=\dint \theta$ for $\theta\in [0,2 \pi]$, which then yields
\begin{align}
\langle g,\phi \rangle_{\rm Rad} = \int_{0}^{2 \pi} \int_{\R} g(t, \theta) \phi(t, \theta)\dint t  \dint \theta.
\end{align}
Such explicit representations are also available in higher dimensions using hyper-spherical polar coordinates.
Of special importance to us is the translated and rotated hyper-spherical Dirac distribution 
$\delta_{(t_0, \V \xi_0)}=\delta(\cdot-t_0)\delta(\cdot-\V \xi_0)\in \Spc S'(\R \times \mathbb{S}^{d-1})$, which is defined as
$\langle \delta_{\V z_0},\phi \rangle_{\rm Rad}\eqdef \phi(\V z_0)$ for all
$\phi \in  \Spc S(\R \times \mathbb{S}^{d-1})$ and any offset $\V z_0=(t_0, \V \xi_0) \in \R \times \mathbb{S}^{d-1}$.
This Dirac impulse, which is separable in the index variables $t$ and $\V \xi$, is included in the Banach space $\Spc M (\R \times \mathbb{S}^{d-1})$ (hyper-spherical Radon measures) with the property that
$\|\delta_{\V z_0}\|_{\Spc M}=1$.

The key property for analysis is that the Radon transform is continuous on $\Spc S$ and invertible (see \citep{Ludwig1966,Helgason2011,Ramm2020} for details).

\begin{theorem}[Continuity and Invertibility of the Radon Transform on $\Spc S(\R^d)$]
\label{Theo:RadonS0}
The Radon operator $\Op R$ continuously maps $\Spc S(\R^d) \to \Spc S(\R \times \mathbb{S}^{d-1})$. Moreover, $\Op R^\ast \Op K_{\rm rad} \Op R=\Op K \Op R^\ast \Op R=\Op R^\ast \Op R\Op K =\Identity \mbox{ on }\Spc S(\R^d)
$,
where $\Op K=(\Op R^\ast \Op R)^{-1}=c_d(-\Delta)^{\tfrac{d-1}{2}}$ with $c_d=\frac{1}{2(2\pi)^{d-1}}$ is the so-called ``filtering'' operator, 
and is $ \Op K_{\rm rad}$ its one-dimensional radial counterpart that acts along the Radon-domain variable $t$. These filtering operators are characterized by their frequency response 
$\widehat K(\bw)=c_d\|\bw\|^{d-1}$ and
$\widehat K_{\rm rad}(\omega)=c_d |\omega|^{d-1}$. \end{theorem}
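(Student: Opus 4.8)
The plan is to reduce every assertion to one-dimensional and $d$-dimensional Fourier analysis, using the Fourier slice theorem \eqref{Eq:CentralSliceTheo} as the bridge between the Radon domain and the Fourier domain. I would first establish the continuity of $\Op R$, and then derive all three operator identities from a single computation of the Fourier multiplier attached to $\Op R^\ast \Op R$.

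For continuity, the slice theorem \eqref{Eq:CentralSliceTheo} identifies the one-dimensional Fourier transform (in $t$) of $\Op R\{f\}(\cdot, \V \xi)$ with the restriction of $\hat f$ to the ray $\{\omega \V \xi : \omega \in \R\}$, up to a fixed normalization constant. Since $f \in \Spc S(\R^d)$ forces $\hat f \in \Spc S(\R^d)$, the map $(\omega, \V \xi) \mapsto \hat f(\omega \V \xi)$ is smooth on $\R \times \mathbb{S}^{d-1}$ and rapidly decreasing in $\omega$, uniformly in $\V \xi$. Carrying this back through the one-dimensional inverse Fourier transform, I would check that each Schwartz seminorm of $\Op R\{f\}$ on $\R \times \mathbb{S}^{d-1}$ is dominated by finitely many seminorms of $f$. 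The only delicate point is smoothness across $\omega=0$ (where the polar parametrization of the ray degenerates), which I would handle together with the evenness constraint $\Op R\{f\}(t, \V \xi) = \Op R\{f\}(-t, -\V \xi)$ inherited from the antipodal symmetry of the sphere.

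For the inversion identities, I would write the inverse Fourier representation $f(\V x) = \int_{\R^d} \hat f(\bw)\, \ee^{\jj \langle \bw, \V x\rangle}\, \dint \bw$ in polar coordinates $\bw = \omega \V \xi$ and symmetrize over the antipodal map to obtain
\begin{equation*}
f(\V x) = \tfrac{1}{2}\int_{\mathbb{S}^{d-1}}\int_\R \hat f(\omega \V \xi)\, \ee^{\jj \omega \V \xi^\Top \V x}\, |\omega|^{d-1}\, \dint \omega\, \dint \V \xi.
\end{equation*}
The Jacobian factor $|\omega|^{d-1}$ is, up to the constant $c_d$, exactly the frequency response of the radial filter $\Op K_{\rm rad}$. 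Inserting $\Op K_{\rm rad}$ into $\Op R\{f\}$ before back-projecting supplies precisely this factor via the slice theorem, which yields the filtered back-projection identity $\Op R^\ast \Op K_{\rm rad} \Op R = \Identity$. Performing the same computation \emph{without} the filter shows that $\Op R^\ast \Op R$ is the radial Fourier multiplier proportional to $\|\bw\|^{-(d-1)}$; hence its inverse is the multiplier $c_d\|\bw\|^{d-1}$, that is $\Op K = c_d(-\Delta)^{(d-1)/2}$. Because $\Op K$ and $\Op R^\ast \Op R$ are both radial Fourier multipliers, they commute, which gives $\Op K \Op R^\ast \Op R = \Op R^\ast \Op R \Op K = \Identity$ simultaneously. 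The stated value $c_d = \frac{1}{2(2\pi)^{d-1}}$ is then fixed by bookkeeping the normalization constants of \eqref{Eq:Fourier} together with the factor $\tfrac{1}{2}$ coming from the antipodal symmetrization.

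The main obstacle I anticipate is not the formal Fourier manipulation but its rigorous justification: the interchanges of integration and the use of $\int_\R \ee^{\jj \omega s}\, \dint \omega = 2\pi \delta(s)$ must be validated on $\Spc S$, and, more seriously, the symbol $\|\bw\|^{d-1}$ is only homogeneous---and, for even $d$, non-smooth at the origin---so that $\Op K$ is a genuinely nonlocal fractional Laplacian. I would therefore verify that $\Op K$ maps $\Spc S(\R^d)$ into a space on which back-projection acts consistently, appealing to the classical range characterization of $\Op R$ (the even test functions satisfying the moment conditions) to ensure that the singular behaviour at $\omega=0$ is harmless. For this range description and the attendant density arguments I would invoke the references already cited \citep{Ludwig1966,Helgason2011,Ramm2020}, reserving the self-contained part of the argument for the Fourier-multiplier computation that produces the three identities.
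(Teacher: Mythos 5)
The paper offers no proof of Theorem \ref{Theo:RadonS0} at all: it is invoked as a classical fact and deferred to the cited references \citep{Ludwig1966,Helgason2011,Ramm2020}. Your sketch reconstructs exactly the argument that those references (and the standard tomography literature) use, so there is no divergence of method to report---only a check of your outline, which is sound. The slice theorem \eqref{Eq:CentralSliceTheo} does control the Schwartz seminorms of $\Op R\{f\}$; the polar-coordinate rewriting of the inverse Fourier integral with the antipodal symmetrization produces the Jacobian $|\omega|^{d-1}$ and hence $\Op R^\ast \Op K_{\rm rad}\Op R=\Identity$; the unfiltered computation identifies $\Op R^\ast\Op R$ as the multiplier $c_d^{-1}\|\bw\|^{-(d-1)}$ (equivalently, convolution with a multiple of $\|\V x\|^{-1}$), so its inverse is the multiplier $c_d\|\bw\|^{d-1}$ and the constant $c_d=\tfrac{1}{2(2\pi)^{d-1}}$ does come out of the factor $\tfrac12$ plus Fourier normalization, as you claim. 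Two refinements to your own assessment of where the work lies. First, your worry about smoothness across $\omega=0$ is misplaced for the \emph{continuity} claim: the map $(\omega,\V \xi)\mapsto \hat f(\omega\V \xi)$ is smooth on $\R\times\mathbb{S}^{d-1}$ outright, and the behaviour of its Taylor coefficients at $\omega=0$ matters only for the \emph{range} characterization (Theorem \ref{Theo:RangeRadon}), which your proof of this theorem never needs. Second, the genuinely delicate point is the one you flag last, and it deserves to be stated sharply: for even $d$ the operator $\Op K$ does not map $\Spc S(\R^d)$ into itself (the symbol $\|\bw\|^{d-1}$ is not smooth at the origin, so $\Op K\{f\}$ decays only algebraically), and likewise $\Op R^\ast\Op R\{f\}$ decays only like $\|\V x\|^{-1}$; hence the identities $\Op K\Op R^\ast\Op R=\Identity$ and $\Op R^\ast\Op R\Op K=\Identity$ involve applying $\Op R$ or a Fourier multiplier to non-Schwartz intermediate objects, and one must verify that the hyperplane integrals \eqref{Eq:Radon2} still converge absolutely and that the slice theorem persists for such functions (it does, since the decay rates $\|\V x\|^{-(2d-1)}$ and $\|\V x\|^{-1}$ respectively suffice for the relevant operations). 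This is the same even-dimensional subtlety the paper itself acknowledges in a footnote; with that verification spelled out, your outline closes into a complete proof.
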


The image of $\Spc S(\R^d)$ through the Radon transform is the space $\Spc S_{\rm Rad}\eqdef\Op R\big(\Spc S(\R^d)\big)$: a subset of the space of hyper-spherical test functions $\Spc S(\R \times   \mathbb{S}^{d-1})$ that can be characterized explicitly \citep{Gelfand1966,Helgason2011,Ludwig1966}. In particular, a function $\phi=\Op R\{\varphi\} \in \Spc S_{\rm Rad}$ must be symmetric and such as  $\int_\R \Op R \{\varphi\}(t,\V \xi)\dint t=\int_{\R^d}\varphi(\V x)\dint \V x$ for all $\V \xi \in \mathbb{S}^{d-1}$ (see Theorem \ref{Theo:RangeRadon} for details). 
While Theorem \ref{Theo:RadonS0}
implies that $\Op R$ is invertible on $\Spc S_{\rm Rad}$, there is also a stronger version of this property for $\Spc S_{\rm Rad}$ equipped with the Schwartz-Fréchet topology inherited from $\Spc S(\R \times   \mathbb{S}^{d-1})$ \citep[p. 60]{Helgason2011} \citep{Hertle1983}.

\begin{theorem}
\label{Theo:Sinvert}
The operator $\Op R: \Spc S(\R^d) \to \Spc S_{\rm Rad}$ is a continuous bijection, with a continuous inverse given by $\Op R^{-1}=(\Op R^\ast\Op K_{\rm rad}): \Spc S_{\rm Rad}\to \Spc S(\R^d)$.
\end{theorem}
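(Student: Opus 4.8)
The plan is to extract everything from the operator identity $\Op R^\ast \Op K_{\rm rad} \Op R = \Identity$ on $\Spc S(\R^d)$ furnished by Theorem~\ref{Theo:RadonS0}, reserving a single soft-analysis step for the continuity of the inverse. Surjectivity of $\Op R : \Spc S(\R^d) \to \Spc S_{\rm Rad}$ is automatic, since $\Spc S_{\rm Rad}$ is \emph{defined} as the image $\Op R\big(\Spc S(\R^d)\big)$, and continuity of $\Op R$ into $\Spc S_{\rm Rad}$ is inherited from the continuity of $\Op R : \Spc S(\R^d) \to \Spc S(\R \times \mathbb{S}^{d-1})$ in Theorem~\ref{Theo:RadonS0}, because $\Spc S_{\rm Rad}$ carries the subspace topology. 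For injectivity I would note that $\Op R\{\varphi\} = 0$ forces $\varphi = \Op R^\ast \Op K_{\rm rad} \Op R\{\varphi\} = 0$. Moreover, for any $\phi = \Op R\{\varphi\} \in \Spc S_{\rm Rad}$ one has $\Op R^\ast \Op K_{\rm rad}\{\phi\} = \varphi \in \Spc S(\R^d)$, so $\Op R^\ast \Op K_{\rm rad}$ is a genuine left inverse of $\Op R$ on $\Spc S_{\rm Rad}$. Since a left inverse of a bijection is its two-sided inverse, this already gives $\Op R^{-1} = \Op R^\ast \Op K_{\rm rad}$ as a set-theoretic map $\Spc S_{\rm Rad} \to \Spc S(\R^d)$.

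The only substantive point that remains is the \emph{continuity} of $\Op R^{-1}$. My preferred route is the open mapping (Banach--Schauder) theorem for Fr\'echet spaces: $\Spc S(\R^d)$ is Fr\'echet, and if $\Spc S_{\rm Rad}$ is also Fr\'echet, then the continuous bijection $\Op R$ is automatically a homeomorphism, so its inverse is continuous. To see that $\Spc S_{\rm Rad}$ is Fr\'echet it suffices to show it is a closed subspace of the Fr\'echet space $\Spc S(\R \times \mathbb{S}^{d-1})$, which I would read off from the explicit range description of Theorem~\ref{Theo:RangeRadon}: the symmetry relation $\phi(t,\V\xi) = \phi(-t,-\V\xi)$ and the moment/consistency conditions defining $\Spc S_{\rm Rad}$ are each closed linear constraints in the Fr\'echet topology, so their common solution set is closed.

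A more explicit alternative is to bound the Schwartz seminorms of $\varphi = \Op R^\ast \Op K_{\rm rad}\{\phi\}$ directly in terms of those of $\phi$, via the Fourier slice theorem \eqref{Eq:CentralSliceTheo}: with $\phi = \Op R\{\varphi\}$, the one-dimensional Fourier transform of $\phi(\cdot,\V\xi)$ in $t$ equals $\widehat\varphi(\omega\V\xi)$, so $\widehat\varphi$ on $\R^d$ is obtained from $\phi$ by a polar-to-Cartesian reparameterization together with the radial multiplier $c_d|\omega|^{d-1}$ carried by $\Op K_{\rm rad}$. The hard part, in either route, is the behavior at the frequency origin: the singular multiplier $|\omega|^{d-1}$ and the polar change of variables make the smoothness of $\widehat\varphi$ at $\bw = \V 0$ delicate, and this smoothness holds precisely because $\phi$ obeys the moment conditions of $\Spc S_{\rm Rad}$. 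Thus the analytic content of the closedness claim and of the direct seminorm estimate is one and the same. I would therefore present the open mapping argument as the clean proof of continuity, and use the Fourier-slice computation only to confirm that the resulting continuous inverse is exactly the filtered back-projection $\Op R^\ast \Op K_{\rm rad}$ named in the statement.
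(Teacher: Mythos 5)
Your proof is correct, and it is worth noting that the paper itself does not actually prove Theorem \ref{Theo:Sinvert}: it states the result and defers to the literature (Helgason, p.~60, and Hertle, 1983). Your argument is therefore a genuine, self-contained alternative built entirely from ingredients the paper does state: surjectivity and the identification of the inverse follow from the definition $\Spc S_{\rm Rad}=\Op R\big(\Spc S(\R^d)\big)$ together with the inversion identity $\Op R^\ast\Op K_{\rm rad}\Op R=\Identity$ of Theorem \ref{Theo:RadonS0}, and continuity of the inverse follows from the open mapping theorem for Fr\'echet spaces once $\Spc S_{\rm Rad}$ is known to be closed in $\Spc S(\R\times\mathbb{S}^{d-1})$. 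Your closedness step is sound and deserves to be spelled out: by Theorem \ref{Theo:RangeRadon}, $\Spc S_{\rm Rad}$ is the intersection of the kernel of the continuous map $\phi\mapsto\phi-\phi(-\cdot,-\cdot)$ with the preimages, under the continuous linear maps $\phi\mapsto\Phi_k$ into $C(\mathbb{S}^{d-1})$, of the finite-dimensional (hence closed) spaces of restrictions of homogeneous degree-$k$ polynomials; an intersection of closed linear subspaces is closed. One remark of presentation: your claim that ``the analytic content of the closedness claim and of the direct seminorm estimate is one and the same'' is not quite right. Once Theorem \ref{Theo:RangeRadon} is granted as a black box, closedness is entirely soft; all of the delicate analysis at the frequency origin (the smoothness of $\widehat\varphi$ at $\bw=\V 0$ forced by the moment conditions, despite the singular multiplier $|\omega|^{d-1}$) is hidden inside the proof of the range theorem, not in the closedness verification. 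That trade-off is exactly what distinguishes your route from the references the paper leans on: Helgason/Hertle obtain continuity of the inverse by direct estimates, whereas you get it for free from the open mapping theorem at the price of invoking the range characterization---which the paper states anyway---so your proof fits the paper's framework with no additional analytic input.
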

The bottom line is that the classical Radon transform is a homeomorphism $\Spc S(\R^d) \to \Spc S_{\rm Rad}$.

\subsubsection{Distributional Extension}
\label{Sec:RadonDistributions}
To extend the framework to distributions, one proceeds by duality. For instance,
by invoking the property that 
 $\Op R^\ast \Op K_{\rm rad}\Op R=\Identity$
on $\Spc S(\R^d)$ and noting that 
$\Op K_{\rm rad}$
is self-adjoint, we make the manipulation
\begin{align}
\forall \varphi \in \Spc S(\R^d),\quad 
\langle f,\varphi \rangle&=\langle f,\Op R^\ast \Op K_{\rm rad}\Op R\{\varphi\} \rangle
\nonumber\\
&= \langle \Op K_{\rm rad}\Op R\{f\},\Op R\{ \varphi\}\rangle_{\rm Rad}
=\langle \Op K_{\rm rad}\Op R\{f\}, \phi\rangle_{\rm Rad},\label{Eq:dualFilRad}
\end{align}
with $\phi=\Op R\{\varphi\} \in \Spc S_{\rm Rad}$ 
and $\varphi=\Op R^\ast\Op K_{\rm rad} \{\phi\}$. Eq.\ \eqref{Eq:dualFilRad} is valid in the classical sense for $f \in L_1(\R^d)$. Otherwise, it is used as definition to extend the scope of the operator for $f\in \Spc S'(\R^d)$.

\begin{definition}
\label{Def:GeneralizedRadon}
The distribution $g
\in \Spc S'(\R \times   \mathbb{S}^{d-1})$ (respectively, $g=\Op R\{f\} \in \big( \Op K_{\rm rad}\Op R\big( \Spc S)\big)'$ to ensure unicity)
is a formal Radon transform of $f \in \Spc S'(\R^d)$ if
\begin{align}
\forall \phi \in \Op K_{\rm rad}\Op R \big( \Spc S(\R^d)\big):\quad \langle g,\phi \rangle_{\rm Rad}
=\langle f, \Op R^\ast \{\phi\} \rangle.
\label{Eq:RDist}
\end{align}
Likewise, $\tilde g
\in \Spc S'(\R \times   \mathbb{S}^{d-1})$ (respectively, $\tilde g=\Op K_{\rm rad}\Op R\{f\} \in \Spc S_{\rm Rad}'$ to ensure unicity) is a formal filtered projection  of $f \in \Spc S'(\R^d)$ if 
\begin{align}
\forall \phi \in \Spc S_{\rm Rad}: \quad \langle \tilde g,\phi \rangle_{\rm Rad}=\langle f, \Op R^\ast\Op K_{\rm rad} \{\phi\} \rangle.
\label{Eq:KRDist}
\end{align}
Finally, the tempered distribution $f=\Op R^\ast \{g\} \in \Spc S'(\R^d)$ is the back-projection of $g
\in {\Spc S'(\R \times   \mathbb{S}^{d-1})}$ 
if 
\begin{align}
\forall \varphi \in \Spc S(\R^d): \quad \langle \Op R^\ast\{ g\}, \varphi \rangle
=\langle g, \Op R \{\varphi\}\rangle_{\rm Rad}. \label{Eq:RadjDis}
\end{align}
\end{definition}

While \eqref{Eq:RadjDis} yields a unique definition of the linear operator $\Op R^\ast: \Spc S'(\R \times   \mathbb{S}^{d-1}) \to \Spc S'(\R^d)$, the establishement of a proper definition of the 
extended operators $\Op R$ and $(\Op K_{\rm rad}\Op R)$ is trickier because there are always infinitely many distributions $g,\tilde g \in \Spc S'(\R \times   \mathbb{S}^{d-1})$ that satisfy \eqref{Eq:RDist} 
or \eqref{Eq:KRDist}.
The root of the problem is that the Radon transform $\Op R: \Spc S(\R^d) \to \Spc S(\R \times \mathbb{S}^{d-1})$ is not surjective:
Since the test functions $\phi$ in \eqref{Eq:RDist} and \eqref{Eq:KRDist} do not span the whole space $\Spc S(\R \times   \mathbb{S}^{d-1})$, they are unable to separate out
the components that are in the 
null space of $\Op R^{\ast}$, which is huge\footnote{In addition to the odd distributions, ${\rm ker}(\Op R^\ast)=\{h \in \Spc S'(\R \times \mathbb{S}^{d-1}): \Op R^\ast\{h\}=0\}$ is formed of 
the set of rapidly decreasing distributions that are orthogonal to functions of the form $\varphi_m(t,\V \xi)=P_{m-1}(t) Y_m(\V \xi)$ where $P_{m-1}$ is a univariate polynomial of degree less than $m$ and $Y_m$ a spherical harmonic of degree $m$ \citep{Ludwig1966}. }.
To illustrate this ambiguity, we consider the Dirac ridge $\delta(\V \xi_0^\Top\V x - t_0) \in \Spc S'(\R^d)$ and refer to Definition \eqref{Eq:Radon2} of the Radon transform to deduce that, for all $\phi=\Op R\{\varphi\} \in \Spc S_{\rm Rad}$ with $\varphi \in \Spc S(\R^d)$,
\begin{align}
\langle \delta(\V \xi_0^\Top\cdot - t_0),\Op R^\ast \Op K_{\rm rad}\{\phi\}\rangle&=\langle \delta(\V \xi_0^\Top\cdot - t_0), \overbrace{\Op R^\ast\Op K_{\rm rad}\Op R}^{\Identity}\{\varphi\}\rangle \nonumber \\
&= \int_{\R^d}\delta(\V \xi_0^\Top\V x-t_0) \varphi(\V x)  \dint \V x
=\Op R\{\varphi\}(-\V z_0)=\langle\delta_{-\V z_0},\phi\rangle_{\rm Rad}
\label{Eq:DiracRad}
\end{align}
where $\V z_0=(t_0,\V \xi_0)$, which shows that the Dirac impulse $\delta_{-\V z_0}$ is a formal filtered projection of $\delta(\V \xi_0^\Top\V x - t_0)$. Moreover, since 
$\delta(\V \xi_0^\Top\V x - t_0)=\delta(-\V \xi_0^\Top\V x +t_0)$, the same holds true for $\delta_{\V z_0}$, as well as for
$\frac{1}{2} \big(\delta_{\V z_0}+\delta_{-\V z_0}\big)$. In fact, there is an infinity of potential ``formal'' solutions, which is consistent
with the lack of injectivity of $\Op R^\ast: \Spc S'(\R \times \mathbb{S}^{d-1}) \to \Spc S'(\R^d)$. 
%

Classically, one obtains a unique characterization of $g=\Op R\{f\}$ for $f \in \Spc S'(\R^d)$ by restricting the scope of \eqref{Eq:RDist} 
to $g \in  \big(\Op K_{\rm rad}\Op R(\Spc S)\big)'$ \citep{Ludwig1966}. 
Likewise, the description of $\tilde g=\Op K_{\rm rad}\Op R\{f\}$ in \eqref{Eq:KRDist} is unique if it is restricted to $\tilde g\in \big(\Op R(\Spc S)\big)'=\Spc S'_{\rm Rad}$, which then yields a proper definition of
$(\Op K_{\rm rad}\Op R): \Spc S'(\R^d) \to \Spc S'_{\rm Rad}$. 

It turns out that this mechanism is equivalent to factoring out the null space of $\Op R^\ast: \Spc S'(\R \times \mathbb{S}^{d-1}) \to \Spc S'(\R^d)$.
Indeed, since the extended back-projection operator specified by \eqref{Eq:RadjDis} is continuous $\Spc S'(\R \times \mathbb{S}^{d-1})  \to \Spc S'(\R^d)$, its null space
\begin{align}
\label{Eq:NullSpace}
\Spc N_{\Op R^\ast}=\{g \in \Spc S'(\R \times \mathbb{S}^{d-1}): \Op R^\ast\{g\}=0\quad \Leftrightarrow \quad \langle g, \phi  \rangle_{\rm Rad}=0, \forall \phi \in \Spc S_{\rm Rad}\}
\end{align}
is a closed subspace of  $\Spc S'(\R \times \mathbb{S}^{d-1})$. Accordingly, we can identify $\Spc S'_{\rm Rad}$ 
as the abstract quotient space $\Spc S'(\R \times \mathbb{S}^{d-1})/\Spc N_{\Op R^\ast}$. In other words, if we find a hyper-spherical distribution $g_0\in \Spc S'(\R \times \mathbb{S}^{d-1})$ such that
\eqref{Eq:KRDist} is met for a given $f \in \Spc S'(\R^d)$, then, strictly speaking, $\Op K_{\rm rad}\Op R\{f\} \in \Spc S'_{\rm Rad}$ is the equivalence class (or coset) given by
\begin{align}
\label{Eq:FProjEquivalenceClass}
\Op K_{\rm rad}\Op R\{f\}=[g_0]=\{g_0 + h: h \in \Spc N_{\Op R^\ast}\}.
\end{align}
The members of $[g_0]$ are interchangeable---we refer to them as ``formal'' filtered projections of $f$ to remind us of this lack of unicity.

Based on those definitions, one obtains the classical result on the invertibility of the (filtered) Radon transform on $\Spc S'(\R^d)$ \citep{Ludwig1966}, which is the dual of 
Theorem \ref{Theo:Sinvert}.

\begin{theorem}[Invertibility of the Radon Transform on $\Spc S'(\R^d)$]
\label{Theo:InvertRadonDist}
It holds that $\Op R^\ast \Op K_{\rm rad} \Op R
=\Identity$ on $\Spc S'(\R^d)$.
Moreover, the filtered Radon transform $\Op K_{\rm rad}\Op R: \Spc S'(\R^d) \to \Spc S'_{\rm Rad}$ is bicontinuous and one-to-one, with its continuous inverse $(\Op K_{\rm rad}\Op R)^{-1}$ being the back-projection operation $\Op R^\ast: \Spc S'_{\rm Rad} \to 
\Spc S'(\R^d)$.
\end{theorem}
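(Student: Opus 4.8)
The plan is to read the statement as the functional-analytic transpose of Theorem~\ref{Theo:Sinvert}, exploiting that the two operators in play are already defined as transposes: $\Op R^\ast$ from \eqref{Eq:RadjDis} is by construction the transpose of the continuous map $\Op R:\Spc S(\R^d)\to\Spc S_{\rm Rad}$, while $\Op K_{\rm rad}\Op R$, fixed on $\Spc S'_{\rm Rad}=(\Spc S_{\rm Rad})'$ by \eqref{Eq:KRDist}, is the transpose of the continuous inverse $\Op R^{-1}=\Op R^\ast\Op K_{\rm rad}:\Spc S_{\rm Rad}\to\Spc S(\R^d)$ supplied by Theorem~\ref{Theo:Sinvert}. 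I would split the argument into (i) the algebraic identity $\Op R^\ast\Op K_{\rm rad}\Op R=\Identity$, (ii) the resulting bijectivity with identification of the inverse, and (iii) the topological upgrade to bicontinuity.

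For (i), I would fix $f\in\Spc S'(\R^d)$, set $\tilde g=\Op K_{\rm rad}\Op R\{f\}\in\Spc S'_{\rm Rad}$, and test $\Op R^\ast\{\tilde g\}$ against an arbitrary $\varphi\in\Spc S(\R^d)$. Applying \eqref{Eq:RadjDis} moves the test function onto $\Op R\{\varphi\}\in\Spc S_{\rm Rad}$ (the pairing being representative-independent precisely because $\Spc N_{\Op R^\ast}=\ker\Op R^\ast$); the defining property \eqref{Eq:KRDist} of $\tilde g$ then converts $\langle\tilde g,\Op R\{\varphi\}\rangle_{\rm Rad}$ into $\langle f,\Op R^\ast\Op K_{\rm rad}\Op R\{\varphi\}\rangle$; finally the classical identity $\Op R^\ast\Op K_{\rm rad}\Op R=\Identity$ on $\Spc S(\R^d)$ from Theorem~\ref{Theo:RadonS0} collapses the inner operator to $\varphi$, giving $\langle\Op R^\ast\{\tilde g\},\varphi\rangle=\langle f,\varphi\rangle$. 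Since $\varphi$ is arbitrary this proves $\Op R^\ast\Op K_{\rm rad}\Op R=\Identity$ on $\Spc S'(\R^d)$, exhibiting $\Op R^\ast$ as a left inverse, whence $\Op K_{\rm rad}\Op R$ is injective.

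For the reverse composition in (ii), I would take $\tilde g\in\Spc S'_{\rm Rad}$, set $f=\Op R^\ast\{\tilde g\}$, and test $\Op K_{\rm rad}\Op R\{f\}$ against an arbitrary $\phi\in\Spc S_{\rm Rad}$. Writing $\phi=\Op R\{\varphi\}$ with $\varphi=\Op R^\ast\Op K_{\rm rad}\{\phi\}=\Op R^{-1}\{\phi\}$ (Theorem~\ref{Theo:Sinvert}), relation \eqref{Eq:KRDist} gives $\langle\Op K_{\rm rad}\Op R\{f\},\phi\rangle_{\rm Rad}=\langle f,\varphi\rangle=\langle\Op R^\ast\{\tilde g\},\varphi\rangle$, and \eqref{Eq:RadjDis} turns this back into $\langle\tilde g,\Op R\{\varphi\}\rangle_{\rm Rad}=\langle\tilde g,\phi\rangle_{\rm Rad}$. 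As this holds for every $\phi\in\Spc S_{\rm Rad}$, the two functionals coincide in $\Spc S'_{\rm Rad}$, so $(\Op K_{\rm rad}\Op R)\,\Op R^\ast=\Identity$ on $\Spc S'_{\rm Rad}$; combined with (i), this makes $\Op K_{\rm rad}\Op R$ and $\Op R^\ast$ mutually inverse, identifying $(\Op K_{\rm rad}\Op R)^{-1}=\Op R^\ast$.

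The hard part will be (iii), bicontinuity, which is where the abstractness of $\Spc S'_{\rm Rad}=\Spc S'(\R\times\mathbb{S}^{d-1})/\Spc N_{\Op R^\ast}$ and the choice of topologies must be handled carefully. I would phrase it through transpose theory: $\Op R$ is a topological isomorphism onto $\Spc S_{\rm Rad}$, a closed (hence Fréchet) subspace of the nuclear Fréchet--Montel space $\Spc S(\R\times\mathbb{S}^{d-1})$, and the transpose of a topological isomorphism between such reflexive spaces is again a topological isomorphism for the strong dual topologies, with $(\Op R^\top)^{-1}=(\Op R^{-1})^\top$. Identifying $\Op R^\top$ with $\Op R^\ast$ via \eqref{Eq:RadjDis} and $(\Op R^{-1})^\top$ with $\Op K_{\rm rad}\Op R$ via \eqref{Eq:KRDist} then yields the continuity of both operators and the fact that they invert one another in one stroke. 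The delicate points to verify are that $\Spc S_{\rm Rad}$ carries exactly the subspace topology making $\Op R$ a homeomorphism (granted by Theorem~\ref{Theo:Sinvert}) and that the subspace/quotient duality $(\Spc S_{\rm Rad})'\cong\Spc S'(\R\times\mathbb{S}^{d-1})/\Spc N_{\Op R^\ast}$ is compatible with the strong topologies, so that the transpose statements transfer verbatim to $\Op R^\ast$ and $\Op K_{\rm rad}\Op R$.
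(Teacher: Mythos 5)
Your proposal is correct and takes exactly the route the paper itself indicates: the paper supplies no proof of this theorem, presenting it as a classical result of \citet{Ludwig1966} and remarking only that it is ``the dual of Theorem \ref{Theo:Sinvert}.'' Your argument---the duality manipulations with \eqref{Eq:RadjDis} and \eqref{Eq:KRDist} that reduce both compositions to the identity $\Op R^\ast\Op K_{\rm rad}\Op R=\Identity$ on $\Spc S(\R^d)$ from Theorem \ref{Theo:RadonS0} (with the representative-independence of the pairing on $\Spc S_{\rm Rad}$ correctly justified via $\Spc N_{\Op R^\ast}$), followed by the transpose/quotient-duality argument for bicontinuity---is precisely the fleshed-out version of that dualization.
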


The distributional extension of the Radon transform inherits most of the properties of the ``classical'' operator defined in \eqref{Eq:Radon2}. 
Of special relevance to us is the quasi-commutativity of $\Op R$ with convolution, also known as the {\em intertwining property}. Specifically, let $h,f \in \Spc S'(\R^d)$ be two distributions whose convolution $h \ast f$ is well defined in $\Spc S'(\R^d)$. Then,
\begin{align}
\Op R\{ h \ast f\}=\Op R\{ h\} \ast_t  \Op R\{ f\}\end{align}
where the symbol ``$\ast_t$" denotes a convolution along the radial variable $t$; that is, $(u \ast_t g)(t,\V \xi)=\langle u(\cdot,\V \xi),g(t-\cdot,\V \xi) \rangle$. In particular, when $h=\Lop\{\delta\}$ is the (isotropic) impulse response of an LSI operator whose frequency response $\widehat L(\bw)=\widehat L_{\rm rad}(\|\bw\|)$ is purely radial, we get that
\begin{align}
\label{Eq:CommutRad1}\Op R\{ h \ast f\}=\Op R\Lop\{f\}=\Lop_{\rm rad}\Op R\{f\},
\end{align}
where $\Lop_{\rm rad}$ is the ``$\ast_t$" convolution operator whose 1D frequency response is $\widehat L_{\rm rad}(\omega)$. Likewise, by duality, for $g \in \Spc S'(\R \times   \mathbb{S}^{d-1})$ we have that
\begin{align}
\label{Eq:CommutRad2}
\Lop \Op R^\ast\{g\}=\Op R^\ast \Lop_{\rm rad}\{g\}
\end{align}
under the implicit assumption that $\Lop \{\Op R^\ast g\}$ and $\Lop_{\rm rad}\{g\}$ are well-defined distributions. 
By taking inspiration from Theorem \ref{Theo:RadonS0}, we may then use Relations \eqref{Eq:CommutRad1} and \eqref{Eq:CommutRad2} for $\Lop=\Op K=(\Op R^\ast \Op R)^{-1}$ to show that $\Op R^\ast\Op K_{\rm rad} \Op R\{f\}=\Op R^\ast \Op R\Op K\{f\}=\Op K\Op R^\ast\Op R\{f\}=f$ for a broad class of distributions. While the first form is valid for all $f \in \Spc S'(\R^d)$ (see Theorem \ref{Theo:InvertRadonDist}), there is a slight restriction with the
second (resp., third), which requires that $\Op K\{f\}$ \big(resp., $\Op K\{g\}$ with $g=\Op R^\ast\Op R\{f\} \in \Spc S'(\R^d)$\big) be well-defined in $\Spc S'(\R^d)$. While the latter condition is always met when $d$ is odd, it can fail in even dimensions for distributions (e.g., polynomials) whose Fourier transform is singular at the origin\footnote{For $d=2n$ even, $\widehat K(\bw) \propto \|\bw\|^{2n-1}$ which is $C^\infty$ everywhere except at the origin, where it is only $C^{2n-2}$, meaning that $\Op K$ can only properly handle (and annihilate) polynomials up to degree $(2n-2)$.}.

The Fourier-slice theorem expressed by \eqref{Eq:CentralSliceTheo} also yields a unique (Fourier-based) characterization of $\Op R \{f\}$, which remains valid for $f\in \Spc S'(\R^d)$ \citep{Ramm2020}. 
It is especially helpful 
when the underlying function or distribution is isotropic. 
An isotropic function $\rho_{\rm iso}: \R^d \to \R$ is characterized 
by its radial profile $\rho: \R_{\ge0} \to \R$, with
$\rho_{\rm iso}(\V x)=\rho(\|\bx\|)$. 
%
The frequency-domain counterpart of this characterization is 
$\widehat \rho_{\rm iso}(\bw)=\widehat \rho_{\rm rad}(\|\bw\|)$ where the radial frequency profile 
can be computed as
\begin{align}
\widehat \rho_{\rm rad}(\omega)= \frac{(2\pi)^{d/2}}{|\omega|^{d/2-1} }\int_{0}^{+\infty} \rho(t) t^{d/2-1} J_{d/2-1}(\omega t) t \dint t,
\end{align}
where  $J_{\nu}$ is the Bessel function of the first kind of order $\nu$.
\begin{proposition} [Radon Transform of Isotropic Distributions] 
\label{Prop:IsoRad}
Let $\rho_{\rm iso}$ be an isotropic distribution whose radial frequency profile is $\widehat \rho_{\rm rad}(\omega)$. Then,
\begin{align}
\Op R\{\rho_{\rm iso}(\cdot -\V x_0)\}(t,\V \xi)&=\rho_{\rm rad}(t-\V \xi^\Top \V x_0) \\
\Op K_{\rm rad}\Op R\{ \rho_{\rm iso}(\cdot -\V x_0)\}(t,\V \xi\}&=\tilde \rho_{\rm rad}(t-\V \xi^\Top \V x_0)\\
\Op R\{\partial^{\V m}\rho_{\rm iso}\}(t,\V \xi)&=\V \xi^{\V m} \Op D^{|\V m|}\{\rho_{\rm rad}\}(t)
\label{Eq:RadDeriv}
\end{align} 
with $\rho_{\rm rad}(t)=\Fourier^{-1}\{\widehat \rho_{\rm rad}(\omega)\}(t)$ and 
$\tilde\rho_{\rm rad}(t)=\tfrac{1}{2(2\pi)^{d-1}}\Fourier^{-1}\{|\omega|^{d-1}\widehat\rho_{\rm rad}(\omega)\}(t)$. 

\end{proposition}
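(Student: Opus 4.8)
The plan is to derive all three identities from a single tool, namely the Fourier-slice theorem \eqref{Eq:CentralSliceTheo}, which (as noted in the text) remains valid for $f \in \Spc S'(\R^d)$; the role of isotropy is precisely to make the resulting one-dimensional slices independent of the direction $\V \xi$.

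For the first identity I would start from the slice theorem, which gives $\Op R\{\rho_{\rm iso}\}(t,\V \xi)=\Fourier^{-1}_{\omega \to t}\{\widehat \rho_{\rm iso}(\omega \V \xi)\}(t)$. Since $\widehat \rho_{\rm iso}(\bw)=\widehat \rho_{\rm rad}(\|\bw\|)$ and $\|\V \xi\|=1$, the restriction to the ray $\bw=\omega\V \xi$ reads $\widehat \rho_{\rm iso}(\omega \V \xi)=\widehat \rho_{\rm rad}(|\omega|)$, i.e.\ the even extension of the radial profile, which does not depend on $\V \xi$. Hence $\Op R\{\rho_{\rm iso}\}(t,\V \xi)=\Fourier^{-1}\{\widehat \rho_{\rm rad}\}(t)=\rho_{\rm rad}(t)$. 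For the shift I would invoke the modulation rule $\Fourier\{\rho_{\rm iso}(\cdot-\V x_0)\}(\bw)=\ee^{-\jj\langle\bw,\V x_0\rangle}\widehat \rho_{\rm iso}(\bw)$, so that along the ray the extra factor is $\ee^{-\jj\omega\V \xi^\Top\V x_0}$, which is the frequency signature of a pure one-dimensional translation by $\V \xi^\Top\V x_0$ in $t$; this yields $\rho_{\rm rad}(t-\V \xi^\Top\V x_0)$.

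The second identity follows by applying $\Op K_{\rm rad}$ to the first. By Theorem~\ref{Theo:RadonS0}, $\Op K_{\rm rad}$ is a one-dimensional LSI (convolution) operator along $t$ with frequency response $c_d|\omega|^{d-1}$, $c_d=\tfrac{1}{2(2\pi)^{d-1}}$; being shift-invariant, it commutes with the translation by $\V \xi^\Top\V x_0$, and $\Op K_{\rm rad}\{\rho_{\rm rad}\}(t)=\Fourier^{-1}\{c_d|\omega|^{d-1}\widehat\rho_{\rm rad}(\omega)\}(t)=\tilde\rho_{\rm rad}(t)$ by the very definition of $\tilde\rho_{\rm rad}$ in the statement. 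This gives $\tilde\rho_{\rm rad}(t-\V \xi^\Top\V x_0)$. For the third identity I would again use the slice theorem together with the differentiation rule $\Fourier\{\partial^{\V m}\rho_{\rm iso}\}(\bw)=(\jj\bw)^{\V m}\widehat\rho_{\rm iso}(\bw)$. Restricting to $\bw=\omega\V \xi$ factors as $(\jj\omega\V \xi)^{\V m}=\V \xi^{\V m}(\jj\omega)^{|\V m|}$ multiplying $\widehat\rho_{\rm rad}(|\omega|)$; pulling out the direction-dependent scalar $\V \xi^{\V m}$ and recognizing that multiplication by $(\jj\omega)^{|\V m|}$ in the one-dimensional frequency variable is inverse-Fourier dual to $\Op D^{|\V m|}$, I obtain $\V \xi^{\V m}\Op D^{|\V m|}\{\rho_{\rm rad}\}(t)$, as claimed in \eqref{Eq:RadDeriv}.

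The steps above are elementary once one is allowed to manipulate Fourier transforms freely, so the main obstacle is purely distributional well-posedness rather than computation. The crux is that restricting $\widehat\rho_{\rm iso}$ to a ray through the origin must produce a legitimate tempered distribution on $\R$; this is exactly what the hypothesis ``the radial frequency profile $\widehat\rho_{\rm rad}$ exists'' encodes, so I would state it as a standing assumption. The only genuinely delicate point is the second identity: it further requires that $|\omega|^{d-1}\widehat\rho_{\rm rad}(\omega)$ be a bona fide tempered distribution. In even dimensions $|\omega|^{d-1}$ is only finitely differentiable at the origin, so this can fail when $\widehat\rho_{\rm rad}$ is too singular at $\omega=0$ — the same polynomial obstruction flagged in the earlier footnote on $\Op K$. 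I would therefore record this regularity proviso for $\tilde\rho_{\rm rad}$ and otherwise treat each of the three slice computations as a routine (distributional) Fourier evaluation.
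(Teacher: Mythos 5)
Your proposal is correct and follows essentially the same route as the paper: the paper's proof likewise derives all three identities directly from the Fourier-slice theorem \eqref{Eq:CentralSliceTheo}, working out the derivative case \eqref{Eq:RadDeriv} explicitly just as you do, with the shift and filtering cases following from the modulation rule and the 1D frequency response of $\Op K_{\rm rad}$. Your added remarks on distributional well-posedness (in particular the possible singularity of $|\omega|^{d-1}\widehat\rho_{\rm rad}$ at the origin in even dimensions) are a sensible refinement that the paper only flags elsewhere, in a footnote, but they do not change the argument.
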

\begin{proof} These identities are direct consequences of the
Fourier-slice theorem. For instance, by setting $\bw =\omega \V \xi$ in the Fourier transform of $\partial^{\V m}\rho_{\rm iso}$, we get that
\begin{align}
\widehat{\partial^{\V m}\rho_{\rm iso}}(\omega \V \xi)
=(\jj \omega\V \xi)^{\V m}\widehat \rho_{\rm rad}(\omega)=\V \xi^{\V m} (\jj \omega)^{|\V m|} \widehat \rho_{\rm rad}(\omega)
\end{align}
which, upon taking the inverse 1D Fourier transform, yields \eqref{Eq:RadDeriv}. 

\end{proof}

%
%
Let us note that both $\rho_{\rm rad}$ and  $\tilde\rho_{\rm rad}$, as inverse Fourier transform of a real-valued function, are symmetric, which is consistent with the symmetry of the Radon transform and its filtered version.

\section{Radon-Compatible Banach Spaces}
\label{Sec:back-projection}
The distribution formalism associated with Theorem \ref{Theo:InvertRadonDist}
is extremely powerful. It provides a definition of the (filtered) Radon transform of any tempered distribution $f \in \Spc S'(\R^d)$, along with a way to invert it via the back-projection operator $\Op R^\ast: \Spc S'_{\rm Rad} \to \Spc S'(\R^d)$.
While this is conceptually very satisfying, it is only moderately helpful for our purpose because the members of $\Spc S'_{\rm Rad}$ are ``abstract'' equivalence classes of distributions. In fact, the investigation of functional-optimization problems with Radon-domain regularization requires some Banach counterpart of Theorems \ref{Theo:Sinvert} and \ref{Theo:InvertRadonDist}, where both $\Op R\{f\}$ and $\Op K_{\rm Rad}\Op R\{f\}$ have concrete representations as hyper-spherical functions or measures. In particular, we are interested in identifying specific Radon-domain Banach spaces---for instance, an appropriate subspace of hyper-spherical measures---over which the back-projection operator $\Op R^\ast$ is guaranteed to be invertible.
\label{Sec:ComplementedSpaces}
To that end, we pick a ``parent'' hyper-spherical Banach space $\Spc X=(\Spc X, \|\cdot\|_{\Spc X})$ such that
$\Spc S(\R \times   \mathbb{S}^{d-1}) \embedD \Spc X \embedD \Spc S'(\R \times   \mathbb{S}^{d-1})$.
This dense embedding hypothesis has several implications.
\begin{enumerate}
\item The Banach space $\Spc X$ is the completion of $\Spc S(\R \times   \mathbb{S}^{d-1})$ in the $\|\cdot\|_{\Spc X}$ norm; i.e., 
\begin{align}
\label{Eq:Xspace}
\Spc X=\overline{\big(\Spc S(\R \times   \mathbb{S}^{d-1}), \|\cdot\|_{\Spc X}\big)}.
\end{align}
\item The dual space $\Spc X'\embedC \Spc S'(\R \times   \mathbb{S}^{d-1})$ is equipped with the norm
\begin{align}
\label{Eq:Dualnorm}
\|g\|_{\Spc X'}=\sup_{\phi \in \Spc X:\; \|\phi\|_{\Spc X}\le 1} \langle g, \phi \rangle
=\sup_{\phi \in \Spc S(\R \times   \mathbb{S}^{d-1}):\; \|\phi\|_{\Spc X}\le 1} \langle g, \phi \rangle,
\end{align}
where the restriction of $\phi \in \Spc S(\R \times   \mathbb{S}^{d-1})$ on the rightmost side of \eqref{Eq:Dualnorm} is justified by the denseness of
$\Spc S(\R \times   \mathbb{S}^{d-1})$ in $\Spc X$.
\item The definition of $\|g\|_{\Spc X'}$ found in the rightmost side of \eqref{Eq:Dualnorm} is valid for 
any distribution $g\in \Spc S'(\R \times   \mathbb{S}^{d-1})$ with $\|g\|_{\Spc X'}=\infty$ for $g \notin \Spc X'$. Accordingly, we can specify the topological dual of $\Spc X'$ as
\begin{align}
\label{Eq:DualX}
\Spc X'=\big\{g \in  \Spc S'(\R \times   \mathbb{S}^{d-1}): \|g\|_{\Spc X'}< \infty\big\}.
\end{align}
\end{enumerate}
Prototypical examples where those properties are met are $(\Spc X, \Spc X')=\big(L_p(\R \times   \mathbb{S}^{d-1}),L_q(\R \times   \mathbb{S}^{d-1})\big)$ with $p\in[1,\infty)$ and $q=p/(p-1)$ (conjugate exponent), as well as
$(\Spc X, \Spc X')=\big(C_0(\R \times   \mathbb{S}^{d-1}),\Spc M(\R \times   \mathbb{S}^{d-1})\big)$ for $p=\infty$.

Likewise, by considering the dual pair $(\Spc S_{\rm Rad}, \Spc S'_{\rm Rad})$, we specify our Radon-compatible Banach subspaces
\begin{align}
\Spc X_{\rm Rad}&=\overline{(\Spc S_{\rm Rad},\|\cdot\|_{\Spc X})}
\label{Eq:Xrad}\\
\Spc X'_{\rm Rad}&=\big(\Spc X_{\rm Rad}\big)'=\big\{g \in  \Spc S'_{\rm Rad}: \|g\|_{\Spc X'_{\rm Rad}}< \infty\big\},
\label{Eq:XradP}
\end{align}
where the underlying dual norms have a definition that is analogous to \eqref{Eq:Dualnorm} with
$\Spc S(\R \times   \mathbb{S}^{d-1})$ and $\Spc X$ being instanciated by 
$\Spc S_{\rm Rad}$ and $\Spc X_{\rm Rad}$.
We now show that $\Op R^\ast$ (resp., $\Op K_{\rm rad}\Op R^\ast$) is invertible on $\Spc X'_{\rm Rad}$ (resp., on $\Spc X_{\rm Rad}$), which is the main theoretical contribution of this work.
%

Since $\Op R: \Spc S(\R^d) \to \Spc S_{\rm Rad}$ is a homeomorphism and the $\|\cdot\|_{\Spc X}$-norm  is continuous in the Fréchet topology of $\Spc S_{\rm Rad}$ \big(resp., of $ \Spc S(\R \times \mathbb{S}^{d-1})$\big), we can consider the normed space $(\Spc S(\R^d),\|\cdot\|_{\Spc Y})$ with
$\|\varphi\|_{\Spc Y}\eqdef\|\Op R\{\varphi\}\|_{\Spc X}$ and identify the operator
$\Op R: (\Spc S(\R^d),\|\cdot\|_{\Spc Y}) \to (\Spc S_{\rm Rad},\|\cdot\|_{\Spc X})$
as an isometry. We then invoke the BLT theorem \citep{Reed1980} to specify the unique
extension $\Op R: \Spc Y \to \overline{(\Spc S_{\rm Rad},\|\cdot\|_{\Spc X})}=\Spc X_{\rm Rad}$, where $\Spc Y$ is a Banach space 
isometric to $\Spc X_{\rm Rad}$. More precisely, we have that
\begin{align}
\Spc Y=\overline{(\Spc S(\R^d),\|\cdot\|_{\Spc Y})}=\{\Op R^{-1}\{g\}: g \in \Spc X_{\rm Rad}\}
\end{align}
with $\Op R^{-1}=\Op R^\ast\Op K_{\rm rad}$ on $\Spc S_{\rm Rad}$ and, by extension, on $\Spc X_{\rm Rad}$. This then leads to the following characterization.

\begin{theorem}[Radon-compatible Banach spaces]
\label{Theo:Complementedspaces}
Let $(\Spc X_{\rm Rad},\Spc X'_{\rm Rad})$ be a dual pair of hyper-spherical Banach spaces induced by a norm $\|\cdot\|_{\Spc X}$ and specified by \eqref{Eq:Xrad} and \eqref{Eq:XradP}. Then,
the following properties hold.
\begin{enumerate}
\item The back-projection $\Op R^\ast: \Spc X'_{\rm Rad} \to  \Spc S'(\R^d)$ is injective and
$\Op K_{\rm rad}\Op R\Op R^\ast=\Identity$ on $\Spc X'_{\rm Rad}$.
\item The filtered back-projection $\Op R^\ast\Op K_{\rm rad}: \Spc X_{\rm Rad} \to  \Spc Y$ is an isometric bijection, with $\Op R\Op R^\ast\Op K_{\rm rad}=\Identity$ on $\Spc X_{\rm Rad}$.
\item The corresponding ``range'' spaces $\Spc Y=\Op R^\ast\Op K_{\rm rad}(\Spc X_{\rm Rad})$ and
$\Spc Y'=\Op R^\ast(\Spc X'_{\rm Rad})$ from a dual Banach pair that is isomorphic to $(\Spc X_{\rm Rad},\Spc X'_{\rm Rad} )$.
\end{enumerate}
Moreover, if there exists a complementary Banach space $\Spc X^{\rm c}_{\rm Rad}$ such that $
\Spc X=\Spc X_{\rm Rad}\oplus\Spc X^{\rm c}_{\rm Rad}$, then additional properties hold.
\begin{enumerate}
\item The dual space is decomposable as $\Spc X'=\Spc X'_{\rm Rad}\oplus(\Spc X^{\rm c}_{\rm Rad})'$.
\item The dual complement $(\Spc X^{\rm c}_{\rm Rad})'$ is the null space of $\Op R^\ast: \Spc X' \to  \Spc Y'$.
\item The complement space $\Spc X^{\rm c}_{\rm Rad}$ is the null space of $\Op R^\ast\Op K_{\rm rad}: \Spc X \to  \Spc Y$.

\item The operators $\Op P_{\rm Rad}=\Op R\Op R^\ast\Op K_{\rm rad}: \Spc X \to \Spc X_{\rm Rad}$ and $\Op P^\ast_{\rm Rad}=\Op K_{\rm rad}\Op R\Op R^\ast: \Spc X' \to \Spc X_{\rm Rad}$ form an adjoint pair of continuous projectors
with $\Op P_{\rm Rad}\big(\Spc X\big)=\Spc X_{\rm Rad}$ and $\Op P^\ast_{\rm Rad}\big(\Spc X'\big)=\Spc X_{\rm Rad}'$.
\end{enumerate}

\end{theorem}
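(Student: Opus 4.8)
The plan is to deduce all seven assertions from the one structural fact established just above the statement, namely that $\Op R$ extends (by the BLT theorem) to an isometric isomorphism $\Op R:\Spc Y\to\Spc X_{\rm Rad}$ with inverse $\Op R^\ast\Op K_{\rm rad}$. Item~2 of the first list is then immediate: $\Op R^\ast\Op K_{\rm rad}$ is the inverse of an isometric bijection, hence itself an isometric bijection $\Spc X_{\rm Rad}\to\Spc Y$, and $\Op R\Op R^\ast\Op K_{\rm rad}=\Identity$ on $\Spc X_{\rm Rad}$. I would obtain item~1 by dualising: the transpose $\Op R^\ast:\Spc X'_{\rm Rad}\to\Spc Y'$ of a surjective isometry is again an isometric bijection (hence injective), while $(\Op R^\ast\Op K_{\rm rad})^\ast=\Op K_{\rm rad}\Op R$ (using self-adjointness of $\Op K_{\rm rad}$ and $(\Op R^\ast)^\ast=\Op R$); since the transpose of an inverse is the inverse of the transpose, $\Op K_{\rm rad}\Op R\Op R^\ast=\Identity$ on $\Spc X'_{\rm Rad}$. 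Item~3 is the observation that the pair $(\Op R,\Op R^\ast)$ of isometric isomorphisms carries the canonical duality of $(\Spc X_{\rm Rad},\Spc X'_{\rm Rad})$ onto $(\Spc Y,\Spc Y')$, with $\Spc Y=\Op R^\ast\Op K_{\rm rad}(\Spc X_{\rm Rad})$ and $\Spc Y'=\Op R^\ast(\Spc X'_{\rm Rad})$ by surjectivity of the two maps.

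For the second list the engine is the operator $\Op P_{\rm Rad}=\Op R\Op R^\ast\Op K_{\rm rad}$. I would first check, algebraically, that it is idempotent: using $\Op R^\ast\Op K_{\rm rad}\Op R=\Identity$ (Theorem~\ref{Theo:InvertRadonDist}) gives $\Op P_{\rm Rad}^2=\Op R(\Op R^\ast\Op K_{\rm rad}\Op R)\Op R^\ast\Op K_{\rm rad}=\Op P_{\rm Rad}$; it fixes $\Spc X_{\rm Rad}$ by item~2 and has range inside $\Spc X_{\rm Rad}$, so its range is exactly $\Spc X_{\rm Rad}$. The transpose $\Op P^\ast_{\rm Rad}=\Op K_{\rm rad}\Op R\Op R^\ast$ is the companion idempotent on $\Spc X'$. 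Granting the continuity discussed below, $\ker\Op P_{\rm Rad}$ is a closed complement of $\Spc X_{\rm Rad}$, and since $\Op R$ is injective one has $\ker\Op P_{\rm Rad}=\ker(\Op R^\ast\Op K_{\rm rad})$, which I identify with $\Spc X^{\rm c}_{\rm Rad}$; this yields items~3 and~4. The dual statements follow from the standard duality of complemented subspaces: $\Op P^\ast_{\rm Rad}$ splits $\Spc X'$ into its range $(\Spc X^{\rm c}_{\rm Rad})^\perp\cong\Spc X'_{\rm Rad}$ and its kernel $(\Spc X_{\rm Rad})^\perp\cong(\Spc X^{\rm c}_{\rm Rad})'$ (item~1), and the adjoint relation $\langle\Op R^\ast g,\psi\rangle=\langle g,\Op R\psi\rangle_{\rm Rad}$ shows that $(\Spc X_{\rm Rad})^\perp$ is exactly $\{g\in\Spc X':\Op R^\ast g=0\ \text{in}\ \Spc Y'\}$, giving item~2.

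The main obstacle is the continuity of $\Op P_{\rm Rad}$ as a bounded operator on the full parent space $\Spc X$ — everything above is formal bookkeeping once this is in hand. I would reduce it to the single range inclusion $\Op P_{\rm Rad}(\Spc X)\subseteq\Spc X_{\rm Rad}$: granting it, idempotency gives the algebraic splitting $\Spc X=\Spc X_{\rm Rad}\oplus\ker\Op P_{\rm Rad}$, and boundedness then follows from the closed-graph theorem, because $\Spc X\embedC\Spc S'(\R\times\mathbb{S}^{d-1})$ continuously and $\Op P_{\rm Rad}$ is continuous for the distributional topology, so an $\Spc X$-convergent sequence with $\Spc X$-convergent image must have limit $\Op P_{\rm Rad}$ of the limit. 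This range inclusion is precisely the content of the complementation hypothesis, and I would read the latter as supplying the Radon-induced complement $\Spc X^{\rm c}_{\rm Rad}=\{g\in\Spc X:\Op R^\ast\Op K_{\rm rad}g=0\}$. The step is genuinely analytical rather than formal because $\Op K_{\rm rad}$ and $\Op R^\ast$ are each unbounded on generic elements of $\Spc X$, so the cancellation that keeps $\Op P_{\rm Rad}g$ inside $\Spc X_{\rm Rad}$ reflects the complemented structure and cannot be extracted from the two factors in isolation.
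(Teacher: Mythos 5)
Your handling of the first three items is sound and is essentially the paper's own argument: both hinge on the BLT-extended isometry $\Op R:\Spc Y\to\Spc X_{\rm Rad}$, and your shortcut of taking Banach transposes (so that $\Op R^\ast$, as the transpose of an isometric isomorphism, is itself one, with $(\Op R^\ast\Op K_{\rm rad})^\ast=\Op K_{\rm rad}\Op R$) is a legitimate compression of the paper's more hands-on verification via the intermediate space $\Spc V=\Op R^\ast(\Spc X'_{\rm Rad})$.

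The second half, however, has a genuine gap. The hypothesis hands you an \emph{arbitrary} Banach complement $\Spc X^{\rm c}_{\rm Rad}$ with $\Spc X=\Spc X_{\rm Rad}\oplus\Spc X^{\rm c}_{\rm Rad}$, and Items 2' and 3' are \emph{conclusions} about that given complement. You instead propose to ``read the hypothesis as supplying'' $\Spc X^{\rm c}_{\rm Rad}=\{g\in\Spc X:\Op R^\ast\Op K_{\rm rad}\,g=0\}$, i.e., to define the complement as a kernel. This turns Item 3' into a tautology and, worse, is circular: as you yourself note, $\Op K_{\rm rad}$ and $\Op R^\ast$ have no meaning on generic elements of $\Spc X$, so the composite $\Op R^\ast\Op K_{\rm rad}$ (equivalently $\Op P_{\rm Rad}$) carries no complement-independent definition on the parent space. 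Concretely, the only way to give $\Op R^\ast\Op K_{\rm rad}\{x\}$ a sense for $x\in\Spc X\setminus\Spc X_{\rm Rad}$ is through the pairings $\langle\Op K_{\rm rad}\Op R\{v\},x\rangle_{\rm Rad}$, $v\in\Spc Y'$; but $\Op K_{\rm rad}\Op R\{v\}$ lives in $\Spc X'_{\rm Rad}=(\Spc X_{\rm Rad})'$, whose elements are functionals on $\Spc X_{\rm Rad}$ only, and extending them to functionals on all of $\Spc X$ is precisely what the direct-sum structure (extension by zero on $\Spc X^{\rm c}_{\rm Rad}$) provides. So the kernel you want to use as a definition exists only after a complement has been chosen. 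The same defect undermines your closed-graph reduction: it requires $\Op P_{\rm Rad}$ to be defined and continuous for the distributional topology, but only the dual-side composition $\Op K_{\rm rad}\Op R\Op R^\ast$ is canonically defined on $\Spc S'(\R\times\mathbb{S}^{d-1})$ (via Theorem \ref{Theo:InvertRadonDist} and \eqref{Eq:RadjDis}); the predual composition $\Op R\Op R^\ast\Op K_{\rm rad}$ is not.

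The paper escapes this circle by working on the dual side first, where everything is canonical. Item 1' is the generic fact $(\Spc X_1\oplus\Spc X_2)'=\Spc X_1'\oplus\Spc X_2'$; under this identification, $(\Spc X^{\rm c}_{\rm Rad})'$ is the annihilator of $\Spc X_{\rm Rad}$ in $\Spc X'$, which by density of $\Spc S_{\rm Rad}$ in $\Spc X_{\rm Rad}$ equals $\{g\in\Spc X':\langle g,\phi\rangle_{\rm Rad}=0,\ \forall\phi\in\Spc S_{\rm Rad}\}$ and therefore sits inside $\Spc N_{\Op R^\ast}$; this is meaningful because $\Op R^\ast$ \emph{is} canonically defined on $\Spc X'\subset\Spc S'(\R\times\mathbb{S}^{d-1})$, and together with the injectivity of Item 1 it gives Item 2'. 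Item 3' then follows by duality, and the continuity of $\Op P_{\rm Rad}$ and $\Op P^\ast_{\rm Rad}$ is not obtained from a closed-graph argument at all: it is inherited from the canonical projectors attached to any complemented pair of Banach spaces (Appendix \ref{Sec:DirectSums}), with which these operators are identified through the identities of Items 1--2 (identity on $\Spc X_{\rm Rad}$, resp.\ $\Spc X'_{\rm Rad}$) and the annihilation properties of Items 2'--3'. To repair your proof, you would need to carry out this identification for the \emph{given} complement rather than redefine $\Spc X^{\rm c}_{\rm Rad}$.
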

\begin{proof}\\ First part: On the one hand, the existence of an extended isometry $\Op R: \Spc Y \to \Spc X_{\rm Rad}$ implies
the continuity of $\Op R^\ast:\Spc X'_{\rm Rad} \to \Spc Y'$ with $\|\Op R\|=\|\Op R^\ast\|=1$ and $\Op R^\ast(\Spc X'_{\rm Rad})\subseteq\Spc Y'$. On the other hand, Theorem \ref{Theo:InvertRadonDist} implies that the back-projection operator 
$\Op R^\ast: \Spc S'_{\rm Rad} \to  \Spc S'(\R^d)$ defined by \eqref{Eq:RadjDis} is injective with a continuous inverse on $\Spc S'_{\rm Rad}$ that is given by $(\Op R^\ast)^{-1}=\Op K_{\rm rad}\Op R$. Let $\Spc V=\Op R^\ast\big(\Spc X'_{\rm Rad}\big)$ be the range of the restricted operator $\Op R^\ast\vert_{\Spc X'_{\rm Rad}}$ with $\Spc X'_{\rm Rad} \embedC \Spc S_{\rm Rad}'$ (by construction).
Then, $\Op R^\ast: \Spc X'_{\rm Rad} \to \Spc V$ is injective and invertible with
$(\Op R^\ast)^{-1}=\Op K_{\rm rad} \Op R: \Spc V \to \Spc X'_{\rm Rad}$. This implies that $\Spc V=\Op R^\ast\big(\Spc X'_{\rm Rad}\big)$ is a Banach space that is isomorphic to 
$\Spc X'_{\rm Rad}$ (see \citep[Proposition 1]{Unser2022}).

Injectivity of $\Op R^\ast\Op K_{\rm rad}$ on $\Spc X_{\rm Rad}$: The continuity of $\Op K_{\rm rad} \Op R: \Spc V \to \Spc X'_{\rm Rad}$, together with the isometric embedding of a Banach space in its bidual, implies the continuity of
$\Op R^\ast\Op K_{\rm rad}: \Spc X_{\rm Rad} \to \Spc V'$ (isometry). The condition $\Op R^\ast\Op K_{\rm rad}\{\phi\}=0$
can then be restated as $\forall v \in \Spc V: \langle v, \Op R^\ast\Op K_{\rm rad}\{\phi\}\rangle=\langle \Op K_{\rm rad}\Op R\{v\}, \phi\rangle_{\rm Rad}=0$. Since $\Spc X'_{\rm Rad}=\Op K_{\rm rad}\Op R\big(\Spc V\big)$, this is equivalent to
say that $\langle g, \phi\rangle_{\rm Rad}=0$ for all $g \in \Spc X'_{\rm Rad}$, which leads to $\phi=0$ and proves that 
$\Op R^\ast\Op K_{\rm rad}: \Spc X_{\rm Rad} \to  \Spc V'$ is injective. 
Consequently, $\widetilde{\Spc Y}=\Op R^\ast\Op K_{\rm rad}(\Spc X_{\rm Rad}) \embedIso  \Spc V'$ is a Banach space that is isomorphic to
$\Spc X_{\rm Rad}$, while there exists an inverse operator
$(\Op R^\ast\Op K_{\rm rad})^{-1}$ such that $\Spc X_{\rm Rad}=(\Op R^\ast\Op K_{\rm rad})^{-1}\big(\widetilde{\Spc Y}\big)$. By invoking the continuity of
$\Op R: \Spc V' \to \Spc X''_{\rm Rad}$, we then manipulate the duality product as
\begin{align}
\langle g, \phi \rangle_{\rm Rad}=\langle w, \tilde y\rangle=\langle \Op R^\ast\{g\},\tilde y \rangle&= \langle g, \Op R\{\tilde y\} \rangle_{\Spc X'_{\rm Rad}\times \Spc X''_{\rm Rad}} \nonumber\\
&=\langle g,\Op R\Op R^\ast\Op K_{\rm rad}\{\phi\}\rangle_{\rm Rad},\end{align}
for all $(g,\phi) \in \Spc X'_{\rm Rad} \times \Spc X_{\rm Rad}$ with $w=\Op R^\ast\{g\}\in \Spc Y'$ and $\tilde y=\Op R^\ast\Op K_{\rm rad}\{\phi\}$.
This identity proves that $\Op R\Op R^\ast\Op K_{\rm rad}=\Identity$ on $\Spc X_{\rm Rad}$.
Thus, $\Op R^\ast\Op K_{\rm rad}: \Spc X_{\rm Rad} \to \widetilde{\Spc Y}$ is an isometric bijection with $\Spc X_{\rm Rad}=\Op R(\widetilde{\Spc Y})=\Op R(\Spc Y)$, which also leads to the identification $\widetilde{\Spc Y}=\Spc Y$.

Since $(\Spc X_{\rm Rad},\Spc Y)$ 
 and  $(\Spc X'_{\rm Rad},\Spc V)$  
are homeomorphic pairs of Banach spaces with $\Spc V \subseteq \Spc Y'$, we readily deduce that $\Spc V=\Spc Y'$, with the dual pair $(\Spc Y,\Spc Y')$ ultimately being isometrically isomorphic to $(\Spc X_{\rm Rad},\Spc X'_{\rm Rad})$. The corresponding functional picture can then be summarized as follows.

{\em The dual spaces} $(\Spc X'_{\rm Rad},\Spc Y')$: For every $w\in \Spc Y'$, there exists a unique $g=\Op K_{\rm rad}\Op R\{w\}\in \Spc X'_{\rm Rad}$ such that
$w=\Op R^\ast\{g\}$ with $\|w\|_{\Spc Y'}=\|g\|_{\Spc X'_{\rm Rad}}$, and vice versa (i.e., $\Spc X'_{\rm Rad}=\Op K_{\rm rad}\Op R(\Spc Y')$). Another way to put it is that
$\Op K_{\rm rad}\Op R\Op R^\ast=\Identity$ on $\Spc X'_{\rm Rad}$, which is the statement in Item 1. Likewise, we have that $\Op R^\ast\Op K_{\rm rad}\Op R=\Identity$ on $\Spc Y'$.

{\em The predual spaces} $(\Spc X_{\rm Rad},\Spc Y)$: For every $y\in \Spc Y$, there exists a unique $\phi=\Op R\{y\}\in \Spc X_{\rm Rad}$ such that
$y=\Op R^\ast \Op K_{\rm rad}\{\phi\}$ with $\|y\|_{\Spc Y}=\|\phi\|_{\Spc X_{\rm Rad}}$, and vice versa (i.e., $\Spc X_{\rm Rad}=\Op R(\Spc Y)$). This also implies that
$\Op R\Op R^\ast\Op K_{\rm rad}=\Identity$ on $\Spc X_{\rm Rad}$, as stated in Item 2. Likewise, we have that $\Op R^\ast\Op K_{\rm rad}\Op R=\Identity$ on $\Spc Y$.

Second part:
Item 1' follows from the generic property that $(\Spc X_1\oplus\Spc X_2)'=\Spc X'_1\oplus\Spc X'_2$,
where the Banach spaces $\Spc X_1$ and $\Spc X_2$ can be arbitrary. In particular, Item 1' tells us that
$\Spc X'_{\rm Rad}, (\Spc X^{\rm c}_{\rm Rad})' \subseteq \Spc X'$ with the embedding being continuous. Moreover, it implies that $\Spc X_{\rm Rad}$ is the  annihilator of $\Spc X_{\rm Rad}'$ in $\Spc X'$, with
\begin{align}
(\Spc X^{\rm c}_{\rm Rad})' &=\{g \in \Spc X': \langle g, \phi\rangle_{\rm Rad}=0, \forall \phi \in \Spc X_{\rm Rad} \}\nonumber\\
&=\{g \in \Spc X': \langle g, \phi\rangle_{\rm Rad}=0, \forall \phi \in \Spc S_{\rm Rad} \},
\label{Eq:Xcdense}
\end{align}
where the substitution of $\Spc X_{\rm Rad}$ with $\Spc S_{\rm Rad}$ in \eqref{Eq:Xcdense} is legitimate because $\Spc S_{\rm Rad}$ is a dense subspace of $\Spc X_{\rm Rad}$.
Since $\Spc X'\subset \Spc S'(\R \times   \mathbb{S}^{d-1})$, this shows that
$(\Spc X^{\rm c}_{\rm Rad})'\subseteq \Spc N_{\Op R^\ast}$ (Item 2'). It also yields Item 3' by duality. The null-space property implies that $\Op P^\ast_{\rm Rad}\big((\Spc X^{\rm c}_{\rm Rad})'\big)=\{0\}$ and $\Op P_{\rm Rad}\big(\Spc X^{\rm c}_{\rm Rad}\big)=\{0\}$. The final element is the identity in Item 1 (resp., Item 2), which 
ensures that $\Op P^\ast_{\rm Rad}: \Spc X'\to \Spc X'_{\rm Rad}$ (resp., $\Op P_{\rm Rad}: \Spc X\to \Spc X_{\rm Rad}$) is the canonical projector on $\Spc X'_{\rm Rad}$ (resp.,
on $\Spc X_{\rm Rad}$). The existence (unicity) and continuity of the latter is guaranteed for any pair of complemented Banach spaces (see Appendix A).

\end{proof}

The dual direct-sum decomposition in Theorem \ref{Theo:Complementedspaces} has the following corollary:  
The Banach complement $\Spc X^{\rm c}_{\rm Rad}$ (resp., $\Spc X_{\rm Rad}$) is the annihilator of
$\Spc X'_{\rm Rad}$ in $\Spc X$ (resp., of $(\Spc X^{\rm c}_{\rm Rad})'$ in $\Spc X'$), and vice versa. In particular,
$(\Spc X^{\rm c}_{\rm Rad})'\embedC \Spc S'(\R \times \mathbb{S}^{d-1})$
is specified by \eqref{Eq:Xcdense} (as a set), which clearly shows that $(\Spc X^{\rm c}_{\rm Rad})'\subseteq \Spc N_{\Op R^\ast}$ (see \eqref{Eq:NullSpace}).

The existence of the projection operator $\Op P^\ast_{\rm Rad}$ in the second part of Theorem \ref{Theo:Complementedspaces} is very handy because it enables us to convert any ``formal'' filtered projection $\tilde g=\Op K_{\rm rad}\Op R\{f\} \in \Spc X' \embedC \Spc S'(\R \times \mathbb{S}^{d-1})$ (see \eqref{Eq:KRDist} in Definition \ref{Def:GeneralizedRadon}) into a concrete representer $\Op P^\ast_{\rm Rad}\{\tilde{g}\} \in \Spc X'_{\rm Rad}$, which has a unique, unambiguous interpretation.

Since the members of $\Spc S_{\rm Rad}$ must be even (see Theorem \ref{Theo:RangeRadon}), we readily deduce that 
$\Spc X_{\rm Rad}\subseteq \Spc X_{\rm even}=\overline{\big(\Spc S_{\rm even}(\R \times   \mathbb{S}^{d-1}), \|\cdot\|_{\Spc X}\big)}$. In particular, when the inclusion is a set equality  and when $\Spc X=\Spc X_{\rm even}\oplus\Spc X_{\rm odd}$, then $\Spc X'_{\rm Rad}=\Spc X'_{\rm even}$ and $\Op P^\ast_{\rm Rad}=\Op P_{\rm Rad}=\Op P_{\rm even}$, which is the self-adjoint projector that extracts the even part of a function.

\section{Radon-Domain Sampling Functionals}
\label{Sec:RadonDirac}
The canonical sampling functional that acts on continuous functions expressed in hyper-spherical coordinates  is the shifted hyper-spherical Dirac distribution $\delta_{\V z_0}\in \Spc M(\R \times   \mathbb{S}^{d-1})$ with $\V z_0=(t_0,\V \xi_0) \in \R \times   \mathbb{S}^{d-1}$ and $\|\delta_{\V z_0}\|_{\Spc M}=1$, which also happens to be a ``formal'' filtered projection of $\delta(\V \xi_0^\Top\V x- t_0)$. 

We now show how we can describe $[\delta_{\V z_0}] \in \Spc S'_{\rm Rad}$ by its unique representer $e_{\V z_0}: g \mapsto g(\V z_0)$ in $\Spc M_{\rm Rad}=(C_{0,{\rm Rad}})'$.  
We start with a theoretical investigation where $e_{\V z_0}$ is characterized indirectly through its functional properties. We then provide an explicit construction that allows us to identity 
$e_{\V z_0}$ 
as the limit of a normalized Radon-compatible distribution whose unit mass get concentrated at $\V z=\pm \V z_0$.

%

\subsection{Abstract Characterization of Radon-Compatible Diracs}
Since $\Spc S_{\rm Rad} \subseteq \Spc S_{\rm even}(\R \times   \mathbb{S}^{d-1})$ (see Theorem \ref{Theo:RangeRadon}),
we have that $C_{0,\rm Rad}\subseteq C_{0,\rm even}(\R \times   \mathbb{S}^{d-1})$,
which implies that all functions $g\in C_{0,\rm Rad}$ are continuous and even.
This ensures that the evaluation functional
$e_{\V z_0}: g \mapsto \langle e_{\V z_0}, f  \rangle=g(\V z_0)$ is well-defined for any $\V z_0 \in \R \times   \mathbb{S}^{d-1}$.
We now prove that  $e_{\V z_0}$ is a continuous linear functional on 
$C_{0,\rm Rad}$---that is, $e_{\V z_0} \in (C_{0,\rm Rad})'=\Spc M_{\rm Rad}$ (the space of Radon-compatible measures)---and establish its basic properties, which are compabible with those of the Dirac distribution. 
By the same token, we get a characterization of the extreme points of the unit ball in $\Spc M_{\rm Rad}$.

\begin{theorem}[Properties of $e_{\V z_0}$]
\label{Theo:Extreme0}
The Radon-domain functionals $e_{\V z_0}: C_{0,{\rm Rad}} \to \R$
 with $\V z_0=(t_0,\V \xi_0) \in \R \times   \mathbb{S}^{d-1}$ have the following properties.

\begin{enumerate}
\item Definition (sampling at $\V z_0$)
\begin{align}
\label{Eq:RadonDirac1}
\forall\phi \in C_{0, \rm Rad}
:\quad \langle  e_{(t_0, \V \xi_0)},\phi\rangle_{\rm Rad}=\phi(t_0,\V \xi_0).
\end{align}
\item Symmetry: $e_{\V z_0}=e_{-\V z_0}$.
\item  Continuity:
$\displaystyle \| e_{\V z_0}\|=\sup_{\phi \in C_{0, {\rm Rad}}:\  \|\phi\|_{L_\infty}\le 1} \langle e_{\V z_0}, \phi \rangle=1$, which implies that $e_{\V z_0}\in \Spc M_{\rm Rad}.$ 
\item Let $\{\V z_k\}$ be any finite set of distinct points. Then, $\|\sum_{k} a_ke_{\V z_k}\|_{\Spc M_{\rm Rad}}=\sum_{k} |a_k|$.
\item $\Op R^\ast\{e_{(t_0,\V \xi_0)}\}(\V x)=\delta(\V \xi_0^\Top \V x -t_0) \  \Leftrightarrow \  e_{(t_0,\V \xi_0)}=\Op K_{\rm rad}\Op R\{\delta(\V \xi_0^\Top \V x -t_0)\}$ in $\Spc M_{\rm Rad}$.
\item If $e\in {\rm Ext}B_{\Spc M_{\rm Rad}}$, then $e=\pm e_{\V z_k}$ for some $\V z_k \in \R \times   \mathbb{S}^{d-1}$.
\end{enumerate}
\end{theorem}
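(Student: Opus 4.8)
The plan is to treat items 1--3 and 5 as short verifications and to concentrate the effort on the $\ell_1$-isometry of item 4 and the extreme-point characterization of item 6. Items 1 and 2 are immediate: every $\phi\in C_{0,\rm Rad}=\overline{(\Spc S_{\rm Rad},\|\cdot\|_{L_\infty})}$ is a uniform limit of elements of $\Spc S_{\rm Rad}\subseteq\Spc S_{\rm even}(\R\times\mathbb{S}^{d-1})$, hence is continuous and even, so $\phi\mapsto\phi(\V z_0)$ is well-defined and satisfies $\phi(t_0,\V\xi_0)=\phi(-t_0,-\V\xi_0)$, i.e. $e_{\V z_0}=e_{-\V z_0}$. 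For item 3 the inequality $\|e_{\V z_0}\|\le1$ is trivial from $|\phi(\V z_0)|\le\|\phi\|_{L_\infty}$, and for the reverse I would display an explicit norming element: taking the normalized isotropic Gaussian $\varphi=c\,\ee^{-\|\cdot-t_0\V\xi_0\|^2/2}\in\Spc S(\R^d)$ and applying Proposition \ref{Prop:IsoRad} gives $\phi_0=\Op R\{\varphi\}$, which equals $(t,\V\xi)\mapsto\rho_{\rm rad}(t-t_0\,\V\xi^\Top\V\xi_0)$ with $\rho_{\rm rad}$ a Gaussian normalized to peak value $1$. Then $\|\phi_0\|_{L_\infty}=1$ and $\phi_0(t_0,\V\xi_0)=\rho_{\rm rad}(0)=1$, so $\|e_{\V z_0}\|=1$ and $e_{\V z_0}\in\Spc M_{\rm Rad}$.

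Item 5 is a one-line computation from the definition \eqref{Eq:RadjDis}: for $\varphi\in\Spc S(\R^d)$ one has $\langle\Op R^\ast\{e_{\V z_0}\},\varphi\rangle=\langle e_{\V z_0},\Op R\{\varphi\}\rangle_{\rm Rad}=\Op R\{\varphi\}(t_0,\V\xi_0)=\int_{\R^d}\delta(t_0-\V\xi_0^\Top\V x)\varphi(\V x)\dint\V x$, which identifies $\Op R^\ast\{e_{\V z_0}\}=\delta(\V\xi_0^\Top\cdot-t_0)$; the stated equivalence follows by applying $\Op K_{\rm rad}\Op R\Op R^\ast=\Identity$ on $\Spc M_{\rm Rad}$, which is item 1 of Theorem \ref{Theo:Complementedspaces} for $(\Spc X,\Spc X')=(C_0,\Spc M)$. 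For item 4 (where ``distinct'' must be read modulo the antipodal identification $\V z\sim-\V z$, since $e_{\V z}=e_{-\V z}$), the bound $\|\sum_k a_k e_{\V z_k}\|\le\sum_k|a_k|$ is the triangle inequality together with item 3. For the matching lower bound I would pass to the quotient $q:\Spc M\to\Spc M_{\rm Rad}$: any lift $\nu\in\Spc M$ of $\sum_k a_k e_{\V z_k}$ has $\|\nu\|_{\Spc M}\ge\|\nu_{\rm even}\|_{\Spc M}$ (antipodal symmetrization is a contraction) and $q(\nu_{\rm even})=q(\nu)$; the density $\overline{(\Spc S_{\rm Rad},\|\cdot\|_{L_\infty})}=C_{0,\rm even}$ then forces the even part $\nu_{\rm even}$ to be the unique even representer $\sum_k\tfrac{a_k}{2}(\delta_{\V z_k}+\delta_{-\V z_k})$, whose total variation equals $\sum_k|a_k|$ because the $2n$ atoms $\pm\V z_k$ are distinct. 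Hence every lift has norm $\ge\sum_k|a_k|$, and the quotient norm is exactly $\sum_k|a_k|$.

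Item 6 I would prove entirely in the quotient picture, independently of item 4. Since $C_{0,\rm Rad}$ is a closed subspace of $C_0(\R\times\mathbb{S}^{d-1})$, the restriction map $q:\Spc M(\R\times\mathbb{S}^{d-1})\to\Spc M_{\rm Rad}$ dual to the inclusion is a weak-$\ast$-continuous metric quotient with $q(\delta_{\V z})=e_{\V z}$, and Hahn--Banach gives $q(B_{\Spc M})=B_{\Spc M_{\rm Rad}}$. Fixing $e\in{\rm Ext}\,B_{\Spc M_{\rm Rad}}$, the set $F=q^{-1}(\{e\})\cap B_{\Spc M}$ is nonempty, convex and (by Banach--Alaoglu) weak-$\ast$-compact, and it is a face of $B_{\Spc M}$: if a point of $F$ is a midpoint of two elements of $B_{\Spc M}$, their $q$-images are both equal to $e$ by the extremality of $e$, so they lie in $F$. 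By Krein--Milman $F$ has an extreme point $\mu^\ast$, which---being extreme in a face---is extreme in $B_{\Spc M}$; the classical description of the extreme points of the unit ball of the measures on a locally compact Hausdorff space yields $\mu^\ast=\pm\delta_{\V z_0}$, whence $e=q(\mu^\ast)=\pm e_{\V z_0}$, as claimed.

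I expect the genuine difficulty to be the density input $\overline{(\Spc S_{\rm Rad},\|\cdot\|_{L_\infty})}=C_{0,\rm even}$ that item 4 relies on. The subtlety is real: a nonnegative hyper-spherical bump concentrated near a single pair $\pm\V z_0$ cannot belong to $\Spc S_{\rm Rad}$ itself, because the range conditions force its zeroth moment $\int_\R\phi(t,\V\xi)\dint t$ to be independent of $\V\xi$; the separating functions therefore exist only in the uniform closure and must be realized as limits of sign-changing Radon transforms. I would secure this density via the range characterization of Theorem \ref{Theo:RangeRadon} together with the observation that no nonzero \emph{even} bounded measure annihilates $\Spc S_{\rm Rad}$ (equivalently, that $\Spc M\cap\ker\Op R^\ast$ consists only of odd measures). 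Item 6, by contrast, is insensitive to this point, since the quotient/Krein--Milman argument uses only the surjectivity of $q$ onto the ball and the classical extreme-point structure of $\Spc M$.
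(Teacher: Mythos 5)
Your items 1, 2, 3, and 5 are correct and essentially coincide with the paper's own proof: the paper also obtains $\|e_{\V z_0}\|=1$ by exhibiting a Radon-transformed Gaussian $g_{\V x_0}(t,\V \xi)=\ee^{-\frac12 (t-\V \xi^\Top \V x_0)^2}\in \Spc S_{\rm Rad}$ with $\V \xi_0^\Top \V x_0=t_0$ (your choice $\V x_0=t_0\V \xi_0$ is a special case), and it proves item 5 by the same duality manipulation combined with Theorem \ref{Theo:Complementedspaces} instantiated at $\Spc X=C_0(\R\times\mathbb{S}^{d-1})$.

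For items 4 and 6 you take a genuinely different route. The paper proves item 4 by an explicit construction \emph{inside} $\Spc S_{\rm Rad}$: a signed sum $f^\ast=\sum_k {\rm sign}(a_k)\,\phi_{\epsilon,\V x_k}$ of localized atoms built from the Radon transforms $d_\epsilon$ of elliptical Gaussians \eqref{Eq:gepsiRad}, with $\epsilon\to 0$ concentrating the mass at $\pm\V z_k$; and it proves item 6 by invoking Lemma \ref{Theo:ExtremeSubspace} (the Dunford--Schwartz result for closed subspaces of $C_0(\Spc Z)$). Your quotient picture has real merits: your item 6 (metric surjection $q$ by Hahn--Banach, the preimage $F=q^{-1}(\{e\})\cap B_{\Spc M}$ as a weak* compact face, Krein--Milman, then the classical form of ${\rm Ext}\,B_{\Spc M}$) is in effect a correct, self-contained proof of the very lemma the paper cites, and it indeed needs no density input. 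Your antipodal-distinctness caveat in item 4 is also a fair catch; it is implicitly required in the paper's construction as well, since the atoms there are even in $(t,\V\xi)$.

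The gap is in your item 4. The lower bound rests on the identification $C_{0,\rm Rad}=C_{0,\rm even}$, i.e., the uniform density of $\Spc S_{\rm Rad}$ in $C_{0,{\rm even}}$, which you use to conclude that the even part of any lift is forced to equal $\sum_k \tfrac{a_k}{2}(\delta_{\V z_k}+\delta_{-\V z_k})$. You correctly flag this as the crux, but the justification you sketch is circular: by Hahn--Banach, the statement ``no nonzero even bounded measure annihilates $\Spc S_{\rm Rad}$'' \emph{is} the density statement (a subspace of $C_{0,{\rm even}}$ is dense iff its annihilator in $\Spc M_{\rm even}$ is trivial), so it cannot serve as the independent observation that secures it. This injectivity of $\Op R^\ast$ on even measures is a nontrivial result; the paper itself uses it only later, as \eqref{Eq:ClosureC0}, with an external citation \citep[Lemma 1]{Neumayer2022}, and its proof of item 4 deliberately sidesteps the issue by producing the norming functions explicitly within $\Spc S_{\rm Rad}$. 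Your argument is therefore repairable in two ways: cite the density result outright, or replace the quotient step for item 4 by the paper's elliptical-Gaussian construction.
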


\begin{proof}
Item 1 is a paraphrasing of the definition of the evaluation functional, while Item 2 follows from the property that $\phi(\V z_0)=\phi(-\V z_0)$ for all $\phi \in C_{0,{\rm Rad}}$.

Continuity: From the definition of the $\Spc M$-norm and the denseness of $\Spc S_{{\rm Rad}}$ in $C _{0,{\rm Rad}}$, we immediately deduce that
\begin{align*}
\|e_{\V z_0}\|_{\Spc M_{\rm Rad}} &=\sup_{\phi \in \Spc S_{{\rm Rad}}:\  \|\phi\|_{L_\infty}\le 1} \phi(\V z_0)
= \sup_{\phi \in \Spc S_{{\rm Rad}}:\   \|\phi\|_{L_\infty}\le 1} \langle \delta_{\V z_0},\phi\rangle 
 \le \|\delta_{\V z_0}\|_{\Spc M}=1
\end{align*}
for all $\V z_0 \in \R \times   \mathbb{S}^{d-1}$. To prove that $\|e_{\V z_0}\|_{\Spc M_{\rm Rad}}=1$, it therefore suffices to exhibit a function $g_{\V x_0} \in \Spc S_{{\rm Rad}}$
with $\|g_{\V x_0}\|_{L_\infty}=1$ such that $\langle g_{\V x_0}, \delta_{\V z_0}\rangle=1$. We take $g_{\V x_0}(t,\V \xi)=\ee^{-\tfrac{1}{2} (t–\V \xi^\Top \V x_0)^2}$ 
with any choice of  $\V x_0\in\R^d$ such that $\V \xi_0^\Top \V x_0=t_0$: With the help of
Proposition \ref{Prop:IsoRad}, we easily verify that $g_{\V x_0}$ is the Radon transform of a standardized Gaussian centered on $\V x_0$, which ensures that $g_{\V x_0}\in \Spc S_{\rm Rad}$.

Now, let $f=\sum_{k=1}^K a_k e_{\V z_k}$, which is such that
$\|f\|_{\Spc M_{\rm Rad}}\le \sum_{k=1}^K |a_k|=\|\V a\|_{\ell_1}$, by the triangle inequality. 
In addition, since the $\V z_k$ are distinct, there exits some $\epsilon_0>0$ such that
$\|\V z_k-\V z_{k'}\|< \epsilon_0$ for all $k\ne k'$.
Again, we shall prove that $\|f\|_{\Spc M_{\rm Rad}}=\|\V a\|_{\ell_1}$ by constructing a conjugate function $f^\ast \in \Spc S_{\rm rad}$ 
with $\|f^\ast\|_{L_\infty}=1$.
This requires the use of more sophisticaded atoms whose localization is adjustable; namely, the functions
$$\phi_{\epsilon, \V x_k}= \frac{1}{ d_{\epsilon}(0,\V e_1)}d_{\epsilon}(t-\V \xi^\Top \V x_k,\M U_{k}\V \xi) \in \Spc S_{\rm rad}$$ 
where $d_{\epsilon}(t,\V \xi)$ is specified by \eqref{Eq:gepsiRad} and where $\V x_k\in \R^d$
and the rotation matrix $\M U_{k}\in \R^{d\times d}$ are chosen such that $\V \xi_k^\Top\V x_k=t_k$ and $\V e_1=\M U_{k}\V \xi_k$. The function
$\phi_{\epsilon, \V x_k}$ with $\epsilon\in(0,1)$ is symmetric, non-negative and bounded by $1$; it achieves its maximum 
at $\V z=\V z_k$ and is decreasing towards zero as $\V z$ moves away from $\pm\V z_k$ with the speed of decay becoming arbitrarily fast as $\epsilon\to 0$ (see Figure \ref{Fig:RadonDirac}). Consequently, one can always find some $\epsilon>0$ such that
$|\phi_{\epsilon, \V x_k}(\V z)|< 1/K$ for all $\V z
\in \R \times   \mathbb{S}^{d-1}$ with $\|\V z\pm\V z_k\|>\epsilon_0$. Then,
$f^\ast=\sum _{k=1}^K {\rm sign}(a_k)\phi_{\epsilon, \V x_k} \in \Spc S_{\rm Rad}$ is such that $\|f^\ast\|_{L_\infty}=1$ and $\langle f^\ast,f\rangle=\sum_{k=1} |a_k|$. The latter implies that $\|f\|_{\Spc M_{\rm Rad}}\ge \|\V a\|_{\ell_1}$, which proves the claim.

Filtered Radon transform: Theorem \ref{Theo:Complementedspaces} with $\Spc X=C_0(\R \times   \mathbb{S}^{d-1})$ ensures that the adjoint pair of operators
$\Op K_{\rm rad}\Op R^\ast: C_{0,\rm Rad} \to \Spc Y$ and
$\Op K_{\rm rad}\Op R: \Spc Y' \to \Spc M_{\rm Rad}$ are continuous (isometries). 
This Banach setting also allows us to specify the corresponding back-projection operator $\Op R^\ast=(\Op K_{\rm rad}\Op R)^{-1}: \Spc M_{\rm Rad}\to \Spc Y'$ by extending the scope of Definition \eqref{Eq:RadjDis} for $\varphi \in 
\Spc Y=\Op R^\ast\Op K_{\rm rad}(C_{0, \rm Rad})$.
We then use the same manipulations as in \eqref{Eq:DiracRad} with $\phi \in C_{0, \rm Rad}$ (resp., $\varphi \in \Spc Y$)
to prove that: (i) $\delta(\V \xi_0^\Top \V x -t_0)=\Op R^\ast\{e_{t_0,\V \xi_0}\}\in \Spc Y'$,
and (ii) $\Op K_{\rm rad}\Op R\{\delta(\V \xi_0^\Top \V x -t_0)\}=e_{t_0,\V \xi_0} \in \Spc M_{\rm Rad}$. 

The abstract interpretation of Items 1 and 2 is that the evaluation functionals on $C_{0,\rm Rad}$ are spanned (with a double covering) by $e_{\V z}$ with $\V z \in \Spc Z\eqdef\R \times   \mathbb{S}^{d-1}$.
Since $C_{0,\rm Rad}$ is a closed subspace of $C_{0}(\Spc Z)$, we can invoke Lemma \ref{Theo:ExtremeSubspace} in the Appendix, which tells us that the extreme points of the unit ball in
$\Spc M_{\rm Rad}=\big(C_{0,\rm Rad}\big)'$ are all of the form 
$\pm e_{\V z}$ for some $\V z \in \Spc Z$.
\end{proof}

\subsection{Constructive Description of Radon-Compatible Dirac}
In complement to the abstract characterization of $e_{\V z_0}$ in Theorem \ref{Theo:Extreme0},
we now describe the underlying distribution concretely.
We consider the $d$-dimensional Gaussian density function
\begin{align}
g_{\epsilon}(\V x)&=\frac{\epsilon^{d-1}}{(2 \pi)^{d/2}}\exp\left(-\frac{1}{2}\big(\frac{x^2_1}{\epsilon^2}+\epsilon^2(x_2^2 + \cdots+x_d^2 )\big)\right) \in \Spc S(\R^d)
\end{align}
whose
 Fourier transform is
\begin{align}
\label{Eq:GaussFourier}
\hat g_{\epsilon}(\V \bw)=\exp\left(-\frac{1}{2}\big(\epsilon^2\omega_1^2+\frac{\omega_2^2 + \cdots+\omega_d^2}{\epsilon^2}\big)\right).
\end{align}
The parameter $\epsilon<1$ controls the degree of ellipticity. 
When $\epsilon$ is small, $g_{\epsilon}(\V x)$ 
gets narrow along the $x_1$ axis, while it spreads out along the other directions. Setting $\bw=\omega\V \xi$, we rewrite \eqref{Eq:GaussFourier} as 
\begin{align}
\label{Eq:depsilon}
\hat g_{\epsilon}(\omega \V \xi)
&=\exp\left(-\frac{\omega^2}{2}\sigma_\epsilon^2(\V \xi)\right) \mbox{ with } \sigma_\epsilon^2(\V \xi)=\epsilon^2\xi_1^2+\frac{\xi_2^2 + \cdots+\xi_d^2}{\epsilon^2}.
\end{align}
From \eqref{Eq:CentralSliceTheo}, we obtain the Radon transform of $g_\epsilon$ as
\begin{align}
\label{Eq:gepsiRad}
d_{\epsilon}(t,\V \xi)= \Op R\{g_{\epsilon}\}(t,\V \xi)&=\frac{1}{\sqrt{2 \pi \sigma_\epsilon^2(\V \xi)} } \exp\left(-\frac{t^2}{2\sigma_\epsilon^2(\V \xi)}\right) \in \Spc S_{\rm Rad},
\end{align}
which is a radial Gaussian with a spherical dependence on the variance. For $0<\epsilon<1$, $d_{\epsilon}(t,\V \xi)$ attains its maximum at $(t,\V \xi)=(0,\pm\V e_1)$.
As $\epsilon$ gets smaller, the maximum increases while the distribution becomes peakier and more and more localized around  $\V z=(0,\V e_1)$. However, the integral of the function is preserved since $ \int_{\mathbb{S}^{d-1}} \int_{\R} d_{\epsilon}(t,\V \xi)\dint t\dint \V \xi= \int_{\mathbb{S}^{d-1}} \dint \V \xi=\frac{2 \pi^{d/2}}{\Gamma(d/2)}$ for any $\epsilon>0$.
This allows us to identify our Radon-compatible sampling functional as
\begin{align}
\label{Eq:ApproxDirac}
e_{(t_0,\V \xi_0)}(t,\V \xi)=
\lim_{\epsilon\to 0+} d_{\epsilon}(t-\V \xi^\Top \V x_0,\M U_{0}\V \xi),
\end{align}
where $\M U_{0}\inR^{d \times d}$ is a rotation matrix such that $\V e_1=\M U_{0}\V \xi_0$
and $\V x_0 \in \R^d$ is such that $\V \xi^\Top \V x_0=t_0$. Examples of such functions for $d=2$, $\V \xi_0=\V e_1=(1,0)$, and $\V x_0=(2,2)$ are shown in Figure \ref{Fig:RadonDirac}. \begin{figure}

\includegraphics[width=12.5cm]{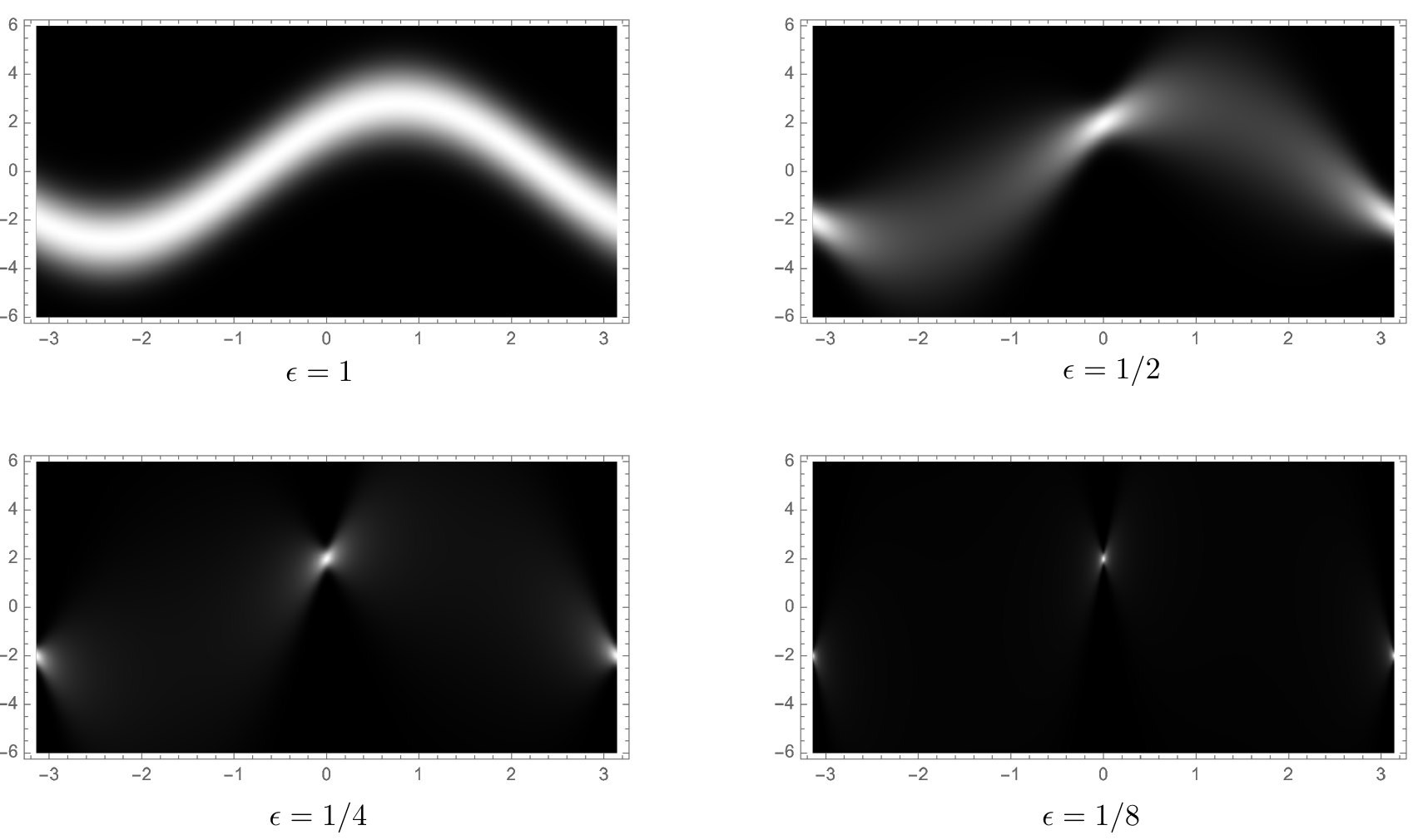}
\caption{Localization effect of the parameter $\epsilon$ 
for the approximation of $e_{(2,\V e_1)}$ for $d=2$, displayed as the sinogram 
of the functions $d_{\epsilon}(t-\V \xi^\Top\V x_0,\theta)$ with $\V \xi=(\cos \theta,\sin \theta)$, $\V x_0=(2,2)$, and
$\epsilon=1,\tfrac{1}{2},\tfrac{1}{4},\tfrac{1}{8}$. 
}
\label{Fig:RadonDirac}

\end{figure}

While this construction reminds us of the description of a Dirac as the limit of a Gaussian distribution whose standard deviation tends to zero, there is one important difference: unlike a conventional Gaussian,
the functions $d_\epsilon$ on the left hand side of \eqref{Eq:ApproxDirac} all satisfy the range conditions of the Radon transform, which are stated in Theorem \ref{Theo:RangeRadon} below.


\section{Ridges 
Revisited}
\label{Sec:Ridges}
A 1D profile along the direction $\V e_1=(1,0,\dots,0)$ is a generalized function of the form
$r_{\V e_1}(\V x)=r(x_1) \times 1$ with $r \in \Spc S'(\R)$. Since the latter is separable, its generalized Fourier transform
is 
\begin{align}
\Fourier\{r_{\V e_1}\}(\bw)=\hat r(\omega_1)\prod_{k=2}^d 2 \pi \delta(\omega_k)=\hat r(\omega_1)(2 \pi)^{d-1}\delta(\omega_2,\dots,\omega_d),
\label{Eq:Ridge0Fourier}
\end{align}
which is a weighted Dirac mass 
localized along the $\omega_1$ axis.
An equivalent formulation of \eqref{Eq:Ridge0Fourier} that involves test functions is
\begin{align}
\label{Eq:Ridge0Fourier2}
\forall \varphi \in \Spc S(\R^d): \langle r_{\V e_1}, \varphi \rangle=\frac{1}{2 \pi} \int_\R \hat r(\omega) \hat \varphi(\omega \V e_1) \dint \omega
\end{align}
where $\hat \varphi \in \Spc S(\R^d)$ is the $d$-dimensional Fourier transform of $\varphi$.
 The argument remains valid when we rotate the coordinate system, which allows us to consider more general ridges of the form
$r_{\V \xi_0}(\V x)\eqdef r(\V \xi_0^\Top \V x)$; that is, 1D profiles along the direction $\V \xi_0 \in \mathbb{S}{d-1}$.
\subsection{Generalized Ridges}
By identifying $\hat \varphi(\omega\V e_1)$ in \eqref{Eq:Ridge0Fourier2} as the 1D Fourier transform of $\Op R \{\varphi(\cdot,\V e_1)\}$ and by substituting $\V e_1$ by $\V \xi_0$, we obtain  the general signal-domain relation:
\begin{align}
\label{Eq:Ridges}
\forall \varphi \in \Spc S(\R^d): \quad \langle r_{\V \xi_0}, \varphi\rangle=\langle r, \Op R\{\varphi\} (\cdot,\V \xi_0)\rangle,\end{align}
which will be referred to as the {\em ridge identity}. Under the assumption that $r$ is a locally integrable function,
we can establish \eqref{Eq:Ridges} by making the change of coordinates $\V y=\M U \V x$, where
$\M U\in \R^{d \times d}$ is any rotation matrix such that $y_1=\V \xi_0^\Top\V x$. We then rewrite the integral explicitly as
\begin{align*}
\int_{\R^d} \varphi(\V x)r_{\V \xi_0}(\V x)\dint \V x&=
\int_{\R^d} \varphi(\V y)r(y_1)\dint y_1 \dots \dint y_d\\
&=
\int_{\R}\underbrace{\left(\int_{\R^{d-1}} \varphi(\V y) \dint y_2 \dots \dint y_d\right)}_{\Op R\varphi(y_1,\V \xi_0)}
 r(y_1)\dint y_1.
\end{align*}
Otherwise, when $r \in\Spc S'(\R)$ has no pointwise interpretation, we simply use \eqref{Eq:Ridges} as definition for the ridge distribution $r_{\V \xi_0}\in \Spc S'(\R^d)$, which is legitimate since $\Op R\{\varphi\} (\cdot,\V \xi_0) \in \Spc S(\R)$.

Note that the special case of \eqref{Eq:Ridges} with $r(t)=\ee^{ - \jj \omega t}$  and
$r_\V \xi(\V x)=\ee^{ - \jj \omega\V \xi^\Top \bx}=\ee^{ - \jj \bw^\Top \bx}$ yields the Fourier-slice theorem \eqref{Eq:CentralSliceTheo}.

The ridge identity \eqref{Eq:Ridges} also helps us delineate the 
range of the Radon transform. Take $r(t)=t^k$ and define
$$\Phi_k(\V \xi)\eqdef\int_{\R} \Op R\{\varphi\}(t,\V \xi)\; t^k \dint t= \int_{\R^d} \varphi(\V x) (\V \xi^\Top\V x)^k \dint \V x
=\sum_{|\bk|=k} a_{\bk} \V \xi^\bk$$ with $a_{\bk}=k!\int_{\R^d} \frac{\V x^\bk}{\bk!}  \varphi(\V x)\dint \V x$.
\begin{theorem}[\citet{Gelfand1966,Helgason2011,Ludwig1966}]
\label{Theo:RangeRadon} A hyper-spherical test function $\phi \in \Spc S(\R \times   \mathbb{S}^{d-1})$ is a valid Radon transform in the sense that $\phi\in \Spc S_{\rm Rad}=\{\Op R\{ \varphi\}: \varphi \in \Spc S(\R^d)\}$ if and only if
\begin{enumerate}
\item Evenness: $\phi(t,\V \xi)=\phi(-t,-\V \xi)$.
\item $\Phi_k(\V \xi)=\int_\R \phi(t,\V \xi) t^k \dint t$ is a homogeneous polynomial in $\V \xi \in \mathbb{S}^{d-1}$ for any $k\in \N$.
\end{enumerate}
\end{theorem}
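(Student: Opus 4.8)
The plan is to prove this characterization of the range $\Spc S_{\rm Rad}$ in two directions, leaning heavily on the Fourier-slice theorem \eqref{Eq:CentralSliceTheo} and the ridge identity \eqref{Eq:Ridges}. For the necessity direction (that any $\phi=\Op R\{\varphi\}$ with $\varphi \in \Spc S(\R^d)$ satisfies the two conditions), evenness is immediate from Definition \eqref{Eq:Radon2}: since $\delta(t - \V \xi^\Top \V x)=\delta(-t - (-\V \xi)^\Top \V x)$, we have $\Op R\{\varphi\}(t,\V \xi)=\Op R\{\varphi\}(-t,-\V \xi)$. For the second (moment) condition, I would invoke the computation already set up just before the theorem statement: applying the ridge identity with $r(t)=t^k$ gives
\begin{align*}
\Phi_k(\V \xi)=\int_\R \Op R\{\varphi\}(t,\V \xi)\, t^k \dint t = \int_{\R^d}\varphi(\V x)(\V \xi^\Top \V x)^k \dint \V x = \sum_{|\bk|=k} a_{\bk}\, \V \xi^{\bk},
\end{align*}
which is manifestly a homogeneous polynomial of degree $k$ in the components of $\V \xi$. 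This half is essentially a bookkeeping consequence of the identities the paper has already established.

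The substantive direction is sufficiency: given $\phi \in \Spc S(\R \times \mathbb{S}^{d-1})$ satisfying (1) and (2), I must produce $\varphi \in \Spc S(\R^d)$ with $\Op R\{\varphi\}=\phi$. The natural candidate is to build $\varphi$ through the Fourier domain. Define $\hat \phi(\omega,\V \xi)=\Fourier_{t \to \omega}\{\phi(\cdot,\V \xi)\}(\omega)$ (the $1$D Fourier transform in the radial variable) and set, via the Fourier-slice prescription, $\hat\varphi(\bw)=\hat\phi(\|\bw\|, \bw/\|\bw\|)$ for $\bw\neq \V 0$. The evenness condition (1) guarantees that this definition is consistent under $\bw \mapsto -\bw$ and thus single-valued on rays. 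I would then define $\varphi=\Fourier^{-1}\{\hat\varphi\}$ and verify $\Op R\{\varphi\}=\phi$ by reversing the Fourier-slice theorem. The delicate point is smoothness and rapid decay: $\hat\varphi$ as built above is automatically smooth and rapidly decaying away from the origin, so the entire difficulty concentrates at $\bw=\V 0$, where passing to polar coordinates $(\omega,\V \xi)$ with $\omega=\|\bw\|$ introduces the usual coordinate singularity. This is exactly where the moment condition (2) earns its keep: the Taylor coefficients of $\hat\varphi$ at the origin are governed by the $\omega$-derivatives of $\hat\phi(\omega,\V \xi)$ at $\omega=0$, and $\partial_\omega^k \hat\phi(0,\V \xi)$ is (up to constants) $\int_\R \phi(t,\V \xi)t^k\dint t=\Phi_k(\V \xi)$. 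Requiring each $\Phi_k$ to be a \emph{homogeneous polynomial of degree $k$} in $\V \xi$ is precisely the condition that these angular profiles extend to genuine homogeneous polynomials in $\bw$, which is what makes $\hat\varphi$ extend to a $C^\infty$ (indeed Schwartz) function across the origin.

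I expect the main obstacle to be this regularity-at-the-origin analysis: showing that conditions (1)–(2) are not merely necessary but exactly sufficient to guarantee $\hat\varphi \in \Spc S(\R^d)$. The heart of the matter is a lemma (classical, attributable to the cited works of Gelfand, Helgason, and Ludwig) stating that a function on $\mathbb{S}^{d-1}$ extends to a homogeneous polynomial of degree $k$ on $\R^d$ if and only if the corresponding term $\omega^k \Phi_k(\V \xi)$ assembles into a polynomial in $\bw=\omega\V \xi$; matching the formal Taylor expansion of $\hat\varphi$ at the origin to an honest Schwartz germ then requires a Borel-type / Whitney-extension argument controlling all mixed derivatives. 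Since the statement is attributed to \citet{Gelfand1966,Helgason2011,Ludwig1966}, I would not reprove this classical moment lemma from scratch but rather cite it and focus the argument on how the Fourier-slice construction transports conditions (1)–(2) into the hypotheses of that lemma, thereby closing the loop. The remaining verifications—that the constructed $\varphi$ is real-valued, Schwartz, and indeed inverts to $\phi$—are routine once the origin is handled.
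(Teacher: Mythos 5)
The paper does not prove this theorem at all: it is imported verbatim from the cited classical works (Gelfand--Vilenkin, Helgason, Ludwig), and the only proof-related content in the text is the computation displayed just before the statement, which establishes the necessity of the moment condition via the ridge identity with $r(t)=t^k$. Your proposal is consistent with and goes somewhat beyond this treatment: your necessity argument reproduces exactly that computation (plus the evenness observation from \eqref{Eq:Radon2}), and your sufficiency outline is the correct skeleton of the classical proof --- define $\hat\varphi(\bw)=\hat\phi(\|\bw\|,\bw/\|\bw\|)$ via the Fourier-slice theorem, use evenness for single-valuedness, and use the moment conditions to control regularity at $\bw=\V 0$ --- with the crux deferred to the same references the paper cites, so no circularity or mismatch arises. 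One imprecision worth flagging: the regularity step is not a ``Borel-type / Whitney-extension argument.'' The candidate $\hat\varphi$ is already defined away from the origin; what the classical proof must show is that this \emph{given} function is smooth (indeed Schwartz) across the origin, which is done by Taylor-expanding $\hat\phi(\omega,\V\xi)$ in $\omega$, noting that condition (2) makes each coefficient $\partial_\omega^k\hat\phi(0,\V\xi)\propto\Phi_k(\V\xi)$ the restriction of a homogeneous degree-$k$ polynomial (hence a genuine polynomial in $\bw$), and estimating the remainder --- not by constructing a smooth function with prescribed jets. Since you cite that lemma rather than execute it, this is a presentational slip rather than a gap, but if you ever filled in the details the Borel/Whitney route would be the wrong tool.
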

In particular, for $k=0$, we must have that $\int_{\R} \phi(t,\V \xi) \dint t= a_{\V 0}={\rm Constant}$ for all
$\V \xi \in  \mathbb{S}^{d-1}$.



The most basic version of a ridge is $\delta(\V \xi_0^\Top \V x-t_0)$ with $r=\delta(\cdot-t_0)$, which is a Dirac ridge along $\V \xi_0$  with offset $t_0$.
Since the Fourier transform of such ridges is entirely localized along the ray $\{\bw=\omega\V \xi_0: \omega \in\R\}$, we can expect their Radon transform to vanish 
away from $\pm \V \xi_0$. 
The latter can be readily identified as follows, where the square bracket notation $[g]$ with 
$ g \in \Spc S'(\R \times \mathbb{S}^{d-1})$ reminds us that the members of
$\Spc S'_{\rm Rad}$ (resp., of $\Op K_{\rm rad}\Op R\big(\Spc S(\R^d)\big)'$) 
are equivalence classes of distributions.
\begin{proposition}[Radon transform of ridge distributions]
\label{Theo:Rad1DprofilesNew}
Let $(t_0,\V \xi_0)=\V z_0 \in \R \times \mathbb{S}^{d-1}$ and $r \in \Spc S'(\R)$.
Then,
\begin{align*}
\Op K_{\rm rad}\Op R \{ \delta(\V \xi_0^\Top\V x- t_0) \}(t,\V \xi)&=
[\delta(\cdot - t_0)\delta(\cdot-\V \xi_0)]\quad  \in \Spc S'_{\rm Rad}\\
\Op R\{ \delta(\V \xi_0^\Top\V x) \}(t,\V \xi)
&=
[q_d(t)\delta(\V \xi-\V \xi_0)]\quad  \in \Op K_{\rm rad}\Op R\big(\Spc S(\R^d)\big)'\\
\Op K_{\rm rad}\Op R\{ r(\V \xi_0^\Top\V x) \}(t,\V \xi)
&=
[ r(t)\delta(\V \xi-\V \xi_0)]\quad  \in \Spc S'_{\rm Rad}\\
\Op R \{ r(\V \xi_0^\Top\V x) \}(t,\V \xi)
&= [(q_d \ast r)(t)\delta(\V \xi-\V \xi_0)]\quad  \in \Op K_{\rm rad}\Op R\big(\Spc S(\R^d)\big)'\end{align*}
where $q_d(t)=2(2\pi)^{d-1}\Fourier^{-1}\{ 1 / |\omega|^{d-1}\}(t)$ is the 1D impulse response of the Radon-domain inverse filtering operator $\Op K_{\rm rad}^{-1}$.
\end{proposition}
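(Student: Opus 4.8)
The plan is to derive all four identities from the single general relation for the filtered projection (the third identity), treating the first as a specialization and the last two as what one obtains after stripping off the radial filter. The only analytic inputs needed are the inversion formula $\Op R^\ast\Op K_{\rm rad}\Op R=\Identity$ of Theorem~\ref{Theo:RadonS0}, the ridge identity~\eqref{Eq:Ridges}, and the duality definitions~\eqref{Eq:RDist} and~\eqref{Eq:KRDist}. For the third identity, I fix $r\in\Spc S'(\R)$, take an arbitrary test function $\phi=\Op R\{\varphi\}\in\Spc S_{\rm Rad}$ (so that $\varphi=\Op R^\ast\Op K_{\rm rad}\{\phi\}$), and chain the definition of the formal filtered projection with the ridge identity:
\begin{align*}
\langle \Op K_{\rm rad}\Op R\{r(\V \xi_0^\Top\V x)\},\phi\rangle_{\rm Rad}
=\langle r(\V \xi_0^\Top\V x),\Op R^\ast\Op K_{\rm rad}\{\phi\}\rangle
=\langle r(\V \xi_0^\Top\V x),\varphi\rangle
=\langle r,\phi(\cdot,\V \xi_0)\rangle .
\end{align*}
Since $\phi\in\Spc S_{\rm Rad}=\Op R\big(\Spc S(\R^d)\big)$, the restriction $\phi(\cdot,\V \xi_0)$ is genuinely in $\Spc S(\R)$, so the rightmost pairing is well defined; and it is exactly the action on $\phi$ of the representer $g_0(t,\V \xi)=r(t)\delta(\V \xi-\V \xi_0)$. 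This identifies $\Op K_{\rm rad}\Op R\{r(\V \xi_0^\Top\V x)\}=[\,r(t)\delta(\V \xi-\V \xi_0)\,]\in\Spc S'_{\rm Rad}$.

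The first identity is then the special case $r=\delta(\cdot-t_0)$, for which $r(t)\delta(\V \xi-\V \xi_0)=\delta(t-t_0)\delta(\V \xi-\V \xi_0)=\delta_{\V z_0}$; this also reconciles with the direct computation already recorded in~\eqref{Eq:DiracRad}. For the unfiltered transforms I would peel off the radial filter. Writing $\Op R=\Op K_{\rm rad}^{-1}\big(\Op K_{\rm rad}\Op R\big)$ and noting that $\Op K_{\rm rad}^{-1}$ acts along $t$ alone as convolution with its impulse response $q_d$, the fourth identity follows from the third by the substitution $r\mapsto q_d\ast r$, giving $\Op R\{r(\V \xi_0^\Top\V x)\}=[\,(q_d\ast r)(t)\delta(\V \xi-\V \xi_0)\,]$. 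The same conclusion can be reached directly through~\eqref{Eq:RDist}: for $\phi=\Op K_{\rm rad}\Op R\{\varphi\}\in\Op K_{\rm rad}\Op R\big(\Spc S(\R^d)\big)$ one has $\Op R^\ast\{\phi\}=\varphi$, hence $\langle\Op R\{r(\V \xi_0^\Top\V x)\},\phi\rangle_{\rm Rad}=\langle r,\Op R\{\varphi\}(\cdot,\V \xi_0)\rangle$; using $\phi(\cdot,\V \xi_0)=\Op K_{\rm rad}\{\Op R\{\varphi\}(\cdot,\V \xi_0)\}$, i.e.\ $\Op R\{\varphi\}(\cdot,\V \xi_0)=q_d\ast\phi(\cdot,\V \xi_0)$, and the evenness of $q_d$ to move the kernel onto $r$, this equals $\langle q_d\ast r,\phi(\cdot,\V \xi_0)\rangle$, which is the representer $(q_d\ast r)(t)\delta(\V \xi-\V \xi_0)$ tested against $\phi$. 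The second identity is finally the case $r=\delta$, where $q_d\ast\delta=q_d$.

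The delicate point throughout is the bookkeeping of the equivalence classes together with the singular nature of $\Op K_{\rm rad}^{-1}$. Its symbol $1/|\omega|^{d-1}$ is singular at $\omega=0$, so $q_d\ast r$ is determined only modulo the polynomials that $\Op K_{\rm rad}$ annihilates; this is precisely why the outputs must be read as cosets in $\Spc S'_{\rm Rad}$ (respectively in $\big(\Op K_{\rm rad}\Op R(\Spc S)\big)'$) rather than as honest distributions, and it is what the bracket notation $[\cdot]$ encodes. I expect the main obstacle to be verifying, at each step, that the restricted function against which one pairs is legitimate and that the candidate representer is tested only against the admissible class ($\Spc S_{\rm Rad}$ for the filtered identities, $\Op K_{\rm rad}\Op R(\Spc S)$ for the unfiltered ones). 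The filtered identities are clean because $\phi(\cdot,\V \xi_0)\in\Spc S(\R)$ outright; for the unfiltered ones the intermediate object $\Op R\{\varphi\}(\cdot,\V \xi_0)$ is still Schwartz, but the factor $q_d$ should only be transferred onto $r$ with the understanding that the result lives modulo $\Spc N_{\Op R^\ast}$, so that no spurious uniqueness is claimed—an ambiguity that is genuinely present in even dimensions.
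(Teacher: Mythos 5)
Your proposal is correct and follows essentially the same route as the paper: both establish the third identity by chaining the ridge identity \eqref{Eq:Ridges} with the duality definition \eqref{Eq:KRDist} (your chain is the paper's computation read in reverse), and both obtain the remaining identities by the substitution $r \mapsto q_d \ast r$, exploiting that $\Op K_{\rm rad}^{-1}$ acts as a radial convolution with $q_d$ — the paper phrases this in the signal domain as $r(\V \xi_0^\Top \V x)=\Op K\{(q_d\ast r)(\V \xi_0^\Top \V x)\}$ verified in Fourier, while you phrase it in the Radon domain. Your explicit duality verification of the fourth identity via \eqref{Eq:RDist}, and your remark on the coset interpretation absorbing the singularity of $1/|\omega|^{d-1}$, are if anything slightly more careful than the paper's "readily verified in the Fourier domain."
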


\begin{proof}
The fact that $
\delta_{\V z_0}=\delta(t-t_0)\delta(\V \xi-\V \xi_0)$ is a formal filtered projection of $\delta(\V \xi_0^\Top\V x- t_0)$ has already been mentioned in the text---it is a direct consequence of Definition \eqref{Eq:Radon2}.

To derive the third identity, we observe that, for all $\varphi \in \Spc S(\R^d)$, one has that
\begin{align*}
\langle r(\cdot)\delta(\cdot-\V \xi_0),\Op R\{\varphi\}\rangle_{\rm Rad}&=\int_{\R} r(t)\Op R\{\varphi\}(t,\V \xi_0)\dint t=\langle r, \Op R\{\varphi\}(\cdot,\V \xi_0) \rangle\\
&=\int_{\R^d} r(\V \xi_0^\Top \V x) \varphi(\V x) \dint \V x=\langle r(\V \xi_0^\Top \cdot),  \Op R^\ast \Op K_{\rm rad}\{\Op R\varphi\}\rangle,
\end{align*}
where we have made use of \eqref{Eq:Ridges}.
By setting $\phi=\Op R\{ \varphi\}\in \Spc S_{\rm rad}$, we then refer to \eqref{Eq:KRDist} to deduce that
$r(t)\delta(\V \xi-\V \xi_0)$ is a formal filtered projection of $r(\V \xi_0^\Top\V x)$.
This then also yields the second and fourth identities by substituting $r$ with
$q_d(\cdot-t_0)$ and $q_d \ast r$, respectively.
The additional element there is $r(\V \xi_0^\Top\V x)=\Op K \Op K^{-1}\{r(\V \xi_0^\Top\V x)\}(\V x)=
\Op K\{\tilde r(\V \xi_0^\Top\V x)\}(\V x)$ with $\tilde r(t)=(q_d \ast r)(t)$, which is readily verified in the Fourier domain.

\end{proof}


An equivalent form of the first identity in Proposition \ref{Theo:Rad1DprofilesNew} is
\begin{align}
\delta(\V \xi_0^\Top\V x- t_0) &=\Op R^\ast\{\delta_{(t_0,\V \xi_0)}\}(\V x),
\label{Eq:RastDelta}\end{align}
which results from 
$\Op R^\ast\Op K_{\rm rad}\Op R=\Identity$ on $\Spc S'(\R^d)$. Likewise, when the back-projection in \eqref{Eq:RastDelta}  is followed by an isotropic operator $\Lop$ whose radial frequency response is $\widehat L_{\rm rad}: \R \to \R$, we use the intertwining property  to show that
\begin{align}
\label{Eq:ridgebackprop}
\Lop\Op R^\ast\{\delta_{(t_0,\V \xi_0)}\}(\V x)=\Op R^\ast\{\Lop_{\rm rad}\delta_{(t_0,\V \xi_0)}\}(\V x)=
r(\V \xi_0^\Top\V x- t_0)
\end{align}
where $r(t)=\Fourier^{-1}\{ \widehat L_{\rm rad}\}(t)$.

\subsection{Connection with Prior Works}
Most authors who use the Radon transform in connection with neural networks do not distinguish ``formal'' from ``range-compatible'' Radon transforms of distributions \citep{Candes1999_ridgelets,Sonoda2017,Ongie2020b,Parhi2021b}. They bypass the difficulty by focusing their analysis on some appropriate subspace of
$\Spc S'_{\rm Rad}$ over which the backpropagation operator $\Op R^\ast$ is known to be invertible, for instance $\Spc M_{\rm even}(\R \times \mathbb{S}^{d-1})=\Spc M(\mathbb{P}^d)$ and/or $\Spc S'_{\rm Liz, even}$.
To specify their norm for Radon-domain measures, \citet{Ongie2020b} consider  the subspace of test functions
$\Spc S_{\rm even}=\{\phi \in \Spc S(\R \times \mathbb{S}^d): \phi(t,\V \xi)=\phi(-t,-\V \xi)\}$ for which the validity of the inversion formula $\Op K_{\rm rad}\Op R\Op R^\ast=\Identity$
has been established by 
\citet{Solmon1987}. The caveat is that the functions $f \in \Op R^\ast(\Spc S_{\rm even})$ that are in the range of $\Op R^\ast$ can decay as badly as $O(1/\|\V x\|)$ \citep[Corollary 3.1.1, p. 73]{Ramm2020}, meaning that their ``classical'' Radon transform specified by \eqref{Eq:Radon2}  can be ill-defined.
\citet{Parhi2021b} follow a different path and identify $\Spc M_{\rm even}(\R \times \mathbb{S}^{d-1})$ as a subspace of the space of even Lizorkin distributions $\Spc S'_{\rm Liz, even}$, which is the topological dual of $\Spc S_{\rm Liz, even}=\{\phi \in \Spc S_{\rm even}: \int_{\R} \phi(t,\V \xi)t^k=0, \forall k\in \N, \V \xi  \in \mathbb{S}^{d-1}\}$.
Implicit in the calculation of Example 1 in \citep{Ongie2020b}  is the property that
\begin{align}
\label{Eq:RadonDirac}
\Op K_{\rm rad}\Op R \{ \delta(\V \xi_0^\Top\V x- t_0) \}(t,\V \xi)
=\frac{1}{2} \Big(\delta(t-t_0)\delta(\V \xi-\V \xi_0)  + \delta(t+t_0)\delta(\V \xi+\V \xi_0)\Big),
\end{align}
which needs to be related to the ``abstract'' description of $e_{\V z_0}$ 
in Theorem \ref{Theo:Extreme0}. 
%
As it turns out, 
the two forms are equivalent. The abstract version, combined with the properties in Theorem \ref{Theo:Extreme0}, 
conveys the same information as \eqref{Eq:RadonDirac}. In complement is the concrete representation of $e_{(t_0,\V \xi_0)}(t,\V \xi)$ given by \eqref{Eq:ApproxDirac}, which gives us a sense of how and why the Radon-domain mass concentrates around the points
$\pm\V z_0$ as the Gaussian ``blob'' $g_\epsilon$ gets thinner along the primary axis and elongates in the perpendicular directions.

Even though the test functions used in the listed works are different from ours
with $\Spc S_{\rm Liz}\subset\Spc S_{\rm Rad}\subset \Spc S_{\rm even}$, the approaches are reconciled by invoking the property that
\begin{align}
C_{0, {\rm even}}=\overline{(\Spc S_{\rm even},\|\cdot\|_{L_\infty})}=\overline{(\Spc S_{\rm Liz},\|\cdot\|_{L_\infty})}=\overline{(\Spc S_{\rm Rad},\|\cdot\|_{L_\infty})}=C_{0,{\rm Rad}}
\label{Eq:ClosureC0}
\end{align}
where the domain of all underlying spaces is $(\R \times \mathbb{S}^{d-1})$  \citep[Lemma 1]{Neumayer2022}. While Proposition \ref{Theo:Rad1DprofilesNew} gives the filtered Radon transform of ridges with the greatest possible level of generality, the caveat is that $\Op K_{\rm rad}\Op R \{r_{\V \xi_0} \}\in \Spc S'_{\rm Rad}$ is an abstract equivalence class.
As complement, we are providing the ``concrete'' version of the main result for the case where the profile is a measure.

\begin{corollary}[Filtered Radon Transform of Ridge Measures]
\label{Theo:Rad1DprofilesContinuous}
Let $r_{\V \xi_0}=r(\V \xi_0^\Top\V x)$ be the ridge with profile $r\in \Spc S'(\R)$ and direction $\V \xi_0 \in \mathbb{S}^{d-1}$. If $r \in \Spc M(\R)$, then the equality
\begin{align}
\label{Eq:RidgeContinuous}
\Op K_{\rm rad}\Op R \{r_{\V \xi_0} \}(t,\V \xi)&=\frac{1}{2} \big(r(t)\delta(\V \xi-\V \xi_0) + r(-t)\delta(\V \xi+\V \xi_0)\big)
\end{align}
holds in $\Spc M_{\rm Rad}=\Spc M_{\rm even}(\R \times \mathbb{S}^{d-1})$. 
\end{corollary}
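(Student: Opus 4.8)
The plan is to upgrade the abstract equivalence-class identity of Proposition \ref{Theo:Rad1DprofilesNew} to a concrete statement in $\Spc M_{\rm Rad}$ by applying the canonical projector supplied by Theorem \ref{Theo:Complementedspaces}. The third identity of Proposition \ref{Theo:Rad1DprofilesNew} already tells us that $g_0 \eqdef r(t)\delta(\V \xi - \V \xi_0)$ is a formal filtered projection of $r_{\V \xi_0}$, so that $\Op K_{\rm rad}\Op R\{r_{\V \xi_0}\} = [g_0] \in \Spc S'_{\rm Rad}$. My first step is to observe that, under the hypothesis $r \in \Spc M(\R)$, this $g_0$ is not merely a tempered distribution but a genuine bounded measure on $\R \times \mathbb{S}^{d-1}$: being the product of the finite measure $r$ on $\R$ with the unit Dirac mass $\delta(\cdot - \V \xi_0)$ on the sphere, it satisfies $\|g_0\|_{\Spc M} = \|r\|_{\Spc M} < \infty$, hence $g_0 \in \Spc M(\R \times \mathbb{S}^{d-1}) = \Spc X'$ with $\Spc X = C_0(\R \times \mathbb{S}^{d-1})$.

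Next I instantiate Theorem \ref{Theo:Complementedspaces} with $\Spc X = C_0(\R \times \mathbb{S}^{d-1})$, for which $\Spc X' = \Spc M(\R \times \mathbb{S}^{d-1})$ and $\Spc X'_{\rm Rad} = \Spc M_{\rm Rad}$. The projector $\Op P^\ast_{\rm Rad} : \Spc X' \to \Spc X'_{\rm Rad}$ converts the formal projection $g_0$ into its unique concrete representer $\Op P^\ast_{\rm Rad}\{g_0\} \in \Spc M_{\rm Rad}$. To evaluate this projector explicitly I invoke \eqref{Eq:ClosureC0}, which gives $C_{0,\rm Rad} = C_{0,\rm even}$; together with the even/odd splitting $C_0 = C_{0,\rm even}\oplus C_{0,\rm odd}$ this places us in the regime of the remark following Theorem \ref{Theo:Complementedspaces}, where $\Spc M_{\rm Rad} = \Spc M_{\rm even}$ and $\Op P^\ast_{\rm Rad} = \Op P_{\rm even}$ is simply the self-adjoint operator extracting the even part with respect to $(t,\V \xi)\mapsto(-t,-\V \xi)$.

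It then remains to compute the even part of $g_0$. Writing $\Op P_{\rm even}\{g_0\}(t,\V \xi) = \tfrac{1}{2}\big(g_0(t,\V \xi) + g_0(-t,-\V \xi)\big)$ and using $g_0(-t,-\V \xi) = r(-t)\delta(-\V \xi - \V \xi_0) = r(-t)\delta(\V \xi + \V \xi_0)$ (the spherical Dirac being symmetric), I obtain exactly the right-hand side of \eqref{Eq:RidgeContinuous}; since both $r$ and its reflection lie in $\Spc M(\R)$, the result is a bounded even measure, and uniqueness of the representer is guaranteed by Theorem \ref{Theo:Complementedspaces}. As an independent consistency check one can verify \eqref{Eq:KRDist} directly: testing the proposed $\tilde g$ against an arbitrary $\phi = \Op R\{\varphi\} \in \Spc S_{\rm Rad}$, the right-hand side collapses to $\langle r_{\V \xi_0}, \varphi\rangle = \langle r, \phi(\cdot,\V \xi_0)\rangle$ by the ridge identity \eqref{Eq:Ridges} and $\Op R^\ast\Op K_{\rm rad}\Op R = \Identity$, while each of the two terms of $\tilde g$ contributes $\tfrac{1}{2}\langle r,\phi(\cdot,\V \xi_0)\rangle$ once the evenness $\phi(-t,-\V \xi)=\phi(t,\V \xi)$ is applied to the second term.

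The main obstacle, such as it is, is one of bookkeeping rather than of analysis: one must ensure that the symmetrization producing the factor $\tfrac{1}{2}$ and the antipodal pair of Dirac masses is precisely the action of $\Op P^\ast_{\rm Rad}$, and not merely one arbitrary member of the coset $[g_0]$. This is exactly why the identification $\Op P^\ast_{\rm Rad} = \Op P_{\rm even}$ via \eqref{Eq:ClosureC0} is the crux of the argument — it is what singles out the unique even representer among the infinitely many formal filtered projections that differ by elements of $\Spc N_{\Op R^\ast}$.
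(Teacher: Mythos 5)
Your proposal is correct and follows essentially the same route as the paper: it starts from the formal filtered projection $r(t)\delta(\V \xi-\V \xi_0)$ of Proposition \ref{Theo:Rad1DprofilesNew}, notes that $r \in \Spc M(\R)$ places it in $\Spc M(\R \times \mathbb{S}^{d-1})$, instantiates Theorem \ref{Theo:Complementedspaces} with $\Spc X=C_0(\R \times \mathbb{S}^{d-1})$ together with \eqref{Eq:ClosureC0} to identify $\Op P^\ast_{\rm Rad}=\Op P_{\rm even}$, and extracts the even part to obtain \eqref{Eq:RidgeContinuous}. Your added consistency check against test functions via \eqref{Eq:KRDist} is a harmless supplement, not a different argument.
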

Indeed, we know that $r(t)\delta(\V \xi-\V \xi_0)$ is a formal filtered Radon transform of $r_{\V \xi_0}$ and that it is included in $\Spc M(\R \times \mathbb{S}^{d-1})$ if $r \in \Spc M(\R)$.
Moreover, Theorem \ref{Theo:Complementedspaces} with $\Spc X=C_0(\R \times \mathbb{S}^{d-1})$, together with \eqref{Eq:ClosureC0}, implies that $\Spc M_{\rm Rad}=(C_{0,{\rm even}})'=\Spc M_{\rm even}$, whose Banach complement in $\Spc M(\R \times \mathbb{S}^{d-1})$ is  $\Spc M_{\rm odd}$. This ensures the validity of the second part of Theorem \ref{Theo:Complementedspaces} with $\Op P^\ast_{\rm Rad}=\Op P_{\rm even}$, where $\Op P_{\rm even}\{g\}(t,\V \xi)=\tfrac{1}{2} \big(g(t,\V \xi)+g(-t,-\V \xi)\big)$.
Accordingly, we can identify $\Op P_{\rm even}\{r(t)\delta(\V \xi-\V \xi_0)\}$ 
as the unique representer in $\Spc M_{\rm Rad}$ of $\Op K_{\rm rad}\Op R \{r_{\V \xi_0} \}=[r(t)\delta(\V \xi-\V \xi_0)]\in \Spc S_{\rm Rad}'$.


The argument also suggests that \eqref{Eq:RidgeContinuous} is likely to be extendable to broader families of distributions. The condition for its validity is that $r(t)\delta(\V \xi-\V \xi_0)$ 
be included in a space $\Spc X'=\big(\Spc X_{\rm Rad} \oplus \Spc X^{\rm c}_{\rm Rad}\big)'$ 
with $\Spc X_{\rm Rad}=\overline{(\Spc S_{\rm Rad},\|\cdot\|_{\Spc X} )}=\overline{(\Spc S_{\rm even},\|\cdot\|_{\Spc X} )}$, so that the underlying projector can be readily identified as $\Op P^\ast_{\rm Rad}
=\Op P_{\rm even}: \Spc X' \to  \Spc X'_{\rm Rad}$.

\section{Variational Optimality of ReLU Networks}
\label{Sec:RadonRegul}

As application of the proposed formalism, we shall now link ReLU neural networks with functional optimization, revisiting the  energy-minimization property uncovered in \citep{Ongie2020b} as well as the general variational-learning problem investigated in \citep{Parhi2021b}. 
\subsection{Learning with Radon-Domain Regularization}
In order to state the relevant optimization problem, we consider the regularization operator 
\begin{align}
\Delta_{\rm R}=\Op K_{\rm rad}\Op R  \Delta: \Spc M_{\Op \Delta_{\rm R}}(\R^d) \to \Spc M(\R \times \mathbb{S}^{d-1})
\end{align}
that was first proposed by 
\citet[Lemma 2 p. 6]{Ongie2020b}, where $\Delta$ is the $d$-dimensional Laplace operator. The $\Spc M$-norm (a.k.a.\ total variation)
of this ``Radonized'' Laplacian is then used as regularization. The corresponding native space $\Spc M_{\Op \Delta_{\rm R}}(\R^d)$ is the largest Banach space over which this seminorm is well-defined under the constraint that the null space of $\Delta_{\rm R}$ be limited to polynomials of degree $1$.

\begin{theorem} [Optimality of Shallow ReLU Networks
]
\label{Theo:ReLUSplines}
Let $E: \R \times \R \to \R$ be a strictly convex loss function,
 $(\V x_m,y_m)\in \R^d \times \R$ with $m=1,\dots,M$ a given set of distinct data points, and $\lambda>0$ some fixed regularization parameter.
Then, for $M>d+1$, the solution set \begin{align}
\label{Eq:ReLUEnergy}
S=\left\{\arg \min_{f \in \Spc M_{\Delta_{\rm R}}(\R^d) } \left( \sum_{m=1}^M E(y_m ,f(\V x_m)) +  
\lambda \|\Delta_{\rm R} f\|_{\Spc M}
\right)\right\},
\end{align}
of the
functional optimization problem
is nonempty and weak* compact. It is the weak* closure of the convex hull of its extreme points, which all take the form
\begin{align}
f_{\rm ridge}(\V x)= b+ \V b^\top \V x + \sum_{k=1}^{K_0} a_k 
{\rm ReLU}(\V \xi_k^\Top\V x- \tau_k)
\label{Eq:ExtremesplineReLU}
\end{align}
with $(b, \V b)\in \R\times \R^d$, and a number $K_0<M$ of adaptive ridges with weight, direction, and offset parameters $(a_k,\V \xi_k,\tau_k) \in \R \times \mathbb{S}^{d-1}\times \R$. 
The corresponding regularization cost, which is common to all solutions, is $\|\Delta_{\rm R} f_{\rm ridge}\|_{\Spc M}=\sum_{k=1}^{K_0}|a_k|$.
\end{theorem}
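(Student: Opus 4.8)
The plan is to recognize \eqref{Eq:ReLUEnergy} as an instance of a generalized–total-variation (gTV) regularized learning problem and to apply the abstract representer theorem for such problems, the role of the regularization operator being played by $\Delta_{\rm R}=\Op K_{\rm rad}\Op R\Delta$. The prerequisites are threefold: (i) a direct-sum decomposition $\Spc M_{\Delta_{\rm R}}(\R^d)=\Spc P_1\oplus\Spc M_{\Delta_{\rm R},0}$ of the native space into the finite-dimensional null space $\Spc P_1$ (affine functions, $\dim=d+1$) and a complement on which $\Delta_{\rm R}$ is a bicontinuous bijection onto the measure space $\Spc M_{\rm Rad}$; (ii) a stable right inverse of $\Delta_{\rm R}$; and (iii) weak$^*$-continuity of the sampling functionals $\nu_m:f\mapsto f(\V x_m)$ on $\Spc M_{\Delta_{\rm R}}(\R^d)$. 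The decomposition and the bijection follow from Theorem \ref{Theo:Complementedspaces} applied with $\Spc X=C_0(\R\times\mathbb{S}^{d-1})$, which identifies $\Spc M_{\rm Rad}=\Spc M_{\rm even}$ and supplies the projectors that factor out the null space; I would build the right inverse by composing $\Op R^\ast$ with the radial double integration that inverts $\Delta$ along each ridge direction.

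Granting these prerequisites, the abstract representer theorem \citep{Unser2017,Unser_2020} yields at once that $S$ is nonempty, convex and weak$^*$-compact, that it equals the weak$^*$-closed convex hull of its extreme points, and that every extreme point is of the sparse form $f=p_1+\sum_{k=1}^{K_0}a_k\psi_{\V z_k}$ with $p_1\in\Spc P_1$, $K_0\le M-\dim\Spc P_1=M-(d+1)<M$, and where each atom $\psi_{\V z_k}$ is the preimage under $\Delta_{\rm R}$ of an extreme point of the unit ball $B_{\Spc M_{\rm Rad}}$. By Item 6 of Theorem \ref{Theo:Extreme0}, those extreme points are exactly the Radon-compatible Diracs $\pm e_{\V z_k}$, so it remains only to solve $\Delta_{\rm R}\psi_{\V z_0}=e_{\V z_0}$ in order to pin down the atoms.

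To identify the atoms concretely, I would apply $\Op R^\ast$ to $\Op K_{\rm rad}\Op R\Delta\psi_{\V z_0}=e_{\V z_0}$ and use $\Op R^\ast\Op K_{\rm rad}\Op R=\Identity$ on $\Spc S'(\R^d)$ together with Item 5 of Theorem \ref{Theo:Extreme0} (namely $\Op R^\ast\{e_{\V z_0}\}=\delta(\V \xi_0^\Top\V x-t_0)$) to reduce the equation to $\Delta\psi_{\V z_0}=\delta(\V \xi_0^\Top\V x-t_0)$. Since the Laplacian acts on a ridge $g(\V \xi_0^\Top\V x)$ as the one-dimensional second derivative $g''(\V \xi_0^\Top\V x)$ (because $\|\V \xi_0\|=1$) and ${\rm ReLU}''=\delta$ in the distributional sense, the choice $\psi_{\V z_0}(\V x)={\rm ReLU}(\V \xi_0^\Top\V x-t_0)$ solves this equation modulo $\Spc P_1$, which establishes the form \eqref{Eq:ExtremesplineReLU}. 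Finally, the common regularization cost is read off from Item 4 of Theorem \ref{Theo:Extreme0}: $\|\Delta_{\rm R}f_{\rm ridge}\|_{\Spc M}=\big\|\sum_{k=1}^{K_0}a_k e_{\V z_k}\big\|_{\Spc M_{\rm Rad}}=\sum_{k=1}^{K_0}|a_k|$, the distinctness of the $\V z_k$ guaranteeing the $\ell_1$ identity.

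I expect the main obstacle to lie in prerequisite (iii): establishing that the point-evaluation functionals are weak$^*$-continuous on $\Spc M_{\Delta_{\rm R}}(\R^d)$, hence admissible measurements. This is delicate precisely because functions in the range of the back-projection can decay as slowly as $O(1/\|\V x\|)$, so one must verify that the elements of the native space are nevertheless continuous with uniformly bounded point samples---equivalently, that each $\nu_m$ belongs to the predual induced by the $\Delta_{\rm R}$-based norm. The role of the hypothesis $M>d+1$ is to guarantee that the affine component $p_1$ is uniquely recoverable from the data (unisolvence for $\Spc P_1$), which is what makes the null-space part of the representation well-posed and the measurement operator injective on $\Spc P_1$.
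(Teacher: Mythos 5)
Your proposal follows essentially the same route as the paper: decompose $\Spc M_{\Delta_{\rm R}}(\R^d)\cong\Spc M_{\rm Rad}\times\Spc P_1$ via Theorem \ref{Theo:Complementedspaces} with $\Spc X=C_0(\R\times\mathbb{S}^{d-1})$, invoke the abstract representer theorem for direct sums, identify the extreme points of the unit ball of $\Spc M_{\rm Rad}$ via Theorem \ref{Theo:Extreme0}, and pull them back to ReLU ridges---your direct resolution of $\Delta\psi_{\V z_0}=\delta(\V \xi_0^\Top\V x-t_0)$ is the same computation as the paper's evaluation of $\Delta_{\rm R}^{\dagger}\{e_{(t_k,\V \xi_k)}\}=\pm\tfrac{1}{2}|\V \xi_k^\Top\cdot-t_k|$ modulo $\Spc P_1$ via the intertwining property \eqref{Eq:ridgebackprop}. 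The weak*-continuity of the point evaluations that you flag as the main obstacle is indeed the one missing ingredient, and the paper settles it immediately after the proof by exhibiting the kernel $h(\V x;t,\V \xi)$ of $\Delta_{\rm R}^{\dagger}$ in \eqref{Eq:Radkernel2} together with the stability bound $\big|\Delta_{\rm R}^{\dagger}\{w\}(\V x)\big|\le C(1+\|\V x\|)\|w\|_{\Spc M}$, which places $\delta(\cdot-\V x_0)$ in the predual $C_{0,\Delta_{\rm R}}(\R^d)$.
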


The key here is that the search space $\Spc M_{\Delta_{\rm R}}(\R^d)$ is isometrically isomorphic to $\Spc M_{\rm Rad} \times \Spc P_1$, where 
$\Spc M_{\rm Rad}=\big(C_{0, \rm Rad}\big)'$
(see Section \ref{Sec:ComplementedSpaces} with $\|\cdot\|_{\Spc X}=\|\cdot\|_{L_\infty}$
and  $\Spc X_{\rm Rad}=C_{0, {\rm Rad}}$) 
and 
$\Spc P_1$ is the space of polynomials of degree $1$ specified by
\eqref{Eq:PolNullspace} with $n_0=1$. An equivalent representation of the latter space of affine functions on $\R^d$ is $\Spc P_1=\{b + \V b^\Top \V x\}$ with $b=b_\V 0$ and $\V b=(b_{\V e_i})_{i=1}^d$, which matches the leading term in \eqref{Eq:ExtremesplineReLU}.

The crucial element for this construction is the pseudoinverse operator
\begin{align}
\Delta_{\rm R}^{\dagger}\eqdef(\Identity -\Proj_{\Spc P_1}) \Delta^{-1}\Op R^\ast: \Spc M_{\rm Rad} \to 
\Spc M_{\Delta_{\rm R}}(\R^d),
\end{align}
where $\Proj_{\Spc P_1}: \Spc S'(\R^d) \to \Spc P_1$ is the projector defined by \eqref{Eq:ProjP}
with $n_0=1$.
The native space is then given by
\begin{align}
\Spc M_{\Op \Delta_{\rm R}}(\R^d)&=\Delta_{\rm R}^{\dagger}\big(\Spc M_{\rm Rad}
\big) \oplus \Spc P_1 \nonumber\\
&=\{\Delta_{\rm R}^{\dagger} \{w\} + p_0: (w,p_0) \in \Spc M_{\rm Rad}
 \times \Spc P_1\} \label{Eq:NativeLaplace}
\end{align}
equipped with the composite norm induced by $\Spc M_{\rm Rad} \times \Spc P_1$. 
Equivalently, given the generic form of $f$ in \eqref{Eq:NativeLaplace}, it is possible to retrieve the components $(w,p_0)$ with the help of suitable linear maps. Specifically, letting $f=\Delta_{\rm R}^{\dagger} \{w\} + p_0$, one verifies that
 \begin{align*}
\Delta_{\rm R} \{f\}&=\Delta_{\rm R}\{ \Delta_{\rm R}^{\dagger} w\} + \Delta_{\rm R}\{ p_0\} \\
&=\Op K_{\rm rad}\Op R  \Delta(\Identity -\Proj_{\Spc P_1}) \Delta^{-1}\Op R^\ast \{w\} + 0\\
&=\Op K_{\rm rad}\Op R  \Delta\Delta^{-1}\Op R^\ast \{w\} - \Op K_{\rm rad}\Op R  \underbrace{\Delta \{\Proj_{\Spc P_1} \Delta^{-1}\Op R^\ast w\}}_{0}\\
&=\Op K_{\rm rad}\Op R \Op R^\ast \{w\}=\Op P^\ast_{\rm Rad} \{w\}=w \in \Spc M_{\rm Rad}
\end{align*}
and
\begin{align*}
\Proj_{\Spc P_1}\{f\}&=\Proj_{\Spc P_1} \{\Delta_{\rm R}^{\dagger} w\} + \Proj_{\Spc P_1} \{p_0\}\\
&=\Proj_{\Spc P_1}(\Identity -\Proj_{\Spc P_1}) \Delta^{-1}\Op R^\ast \{w\}  + p_0 
=0 + p_0 \in \Spc P_1,\end{align*}
where the annihilation of the $w$-component follows from the
idempotence of the projector. 

\vspace*{1ex}
\begin{proof}[Proof of Theorem \ref{Theo:ReLUSplines}]
Theorem  \ref{Theo:Complementedspaces} with $\Spc X=C_0(\R \times  \mathbb{S}^{d-1})$
ensures that the back-projection operator $\Op R^\ast$ is invertible on $\Spc X'_{\rm Rad}=\Spc M_{\rm Rad}=(C_{0,\rm Rad})'$.
This is the fundamental ingredient that makes
$\Spc M_{\Delta_{\rm R}}(\R^d)$ isometrically isomorphic to $\Spc M_{\rm Rad} \times \Spc P_1$ via the reversible mapping $w=\Delta_{\rm R}\{f\}\in  \Spc M_{\rm Rad}$ and $p_0=\Proj_{\Spc P_1}\{f\}\in \Spc P_1$.
This equivalent representation of $f$ enables us to derive the result as a corollary of the third case of the abstract representer theorem for direct sums in 
\citep[Theorem 3]{Unser2022}. This representer theorem gives the generic form of the extreme points of the solution set $S$ as $f_{\rm extreme}=p_0 + \sum_{k=1}^{K_0} a_k e_k$, where $p_0 \in \Spc P_1$ is a null-space component and where the $e_k$ are extreme points of the unit ball $B_{\Spc U'}$ of the primary-component space $\Spc U'=\Delta_{\rm R}^{\dagger}\big(\Spc M_{\rm Rad}\big)$.
Based on the form of the extreme points of $\Spc M_{\rm Rad}$ given by Theorem \ref{Theo:Extreme0} and the property that ${\rm Ext}B_{\Spc U'}=\Delta_{\rm R}^{\dagger}\big({\rm Ext}B_{\Spc M_{\rm Rad}}\big)$ (since $\Delta_{\rm R}^{\dagger}$ is an isometry), we deduce that any $e_k \in {\rm Ext}B_{\Spc U'}$ can be written as
\begin{align*}
e_k&=\pm \Delta_{\rm R}^{\dagger}\{e_{(t_k,\V \xi_k)}\}=(\Identity -\Proj_{\Spc P_1}) \Delta^{-1}\Op R^\ast\{e_{(t_k,\V \xi_k)}\}\\
&= \pm (\Identity -\Proj_{\Spc P_1})\{\tfrac{1}{2}|\V \xi_k^\Top\cdot-t_k|\} \\
&=\pm \tfrac{1}{2}|\V \xi_k^\Top\cdot -t_k| \pm p_k\quad\mbox{ with } \quad p_k\in \Spc P_1,
\end{align*}
where we have used \eqref{Eq:ridgebackprop} with $\Lop=\Delta^{-1}$ and $\widehat L_{\rm rad}(\omega)=1/\omega^2$ to evaluate the back-projection.
Since $\tfrac{1}{2}|t|=(t)_+-\tfrac{1}{2}t$, we can also write $e_k$ as
\begin{align*}
e_k&=\pm (\V \xi_k^\top \cdot - t_k)_+  \pm \tilde p_k,
\end{align*}
where $\tilde p_k=p_k-\tfrac{1}{2}  \left(\V \xi_k^\top \cdot + t_k\right) \in \Spc P_1$. We then obtain \eqref{Eq:ExtremesplineReLU} by forming
the linear combination of extreme points and by grouping all polynomial components
in a single term $b + \V b^\Top \V x$. Since $\Delta_{\rm R}\{f_{\rm ridge}\}=\sum_{k=1}^{K_0} a_k e_{(t_k,\V \xi_k)}$, we also deduce that
$\|\Delta_{\rm R}\{f_{\rm ridge}\}\|_{\Spc M}=\sum_{k=1}^{K_0} |a_k|$  by invoking Theorem \ref{Theo:Extreme0}.
\end{proof}

\subsection{Discussion}

Our formulation of Theorem \ref{Theo:ReLUSplines} 
owes a lot to the pioneering works of 
\citet{Ongie2020b} and 
\citet{Parhi2021b} (PN). Our is merely a refinement of the results
published by these authors together with a clarification of the underlying mathematics. 
The interesting outcome is that the solution of the functional-optimization problem in \eqref{Eq:ReLUEnergy} can be implemented by a 2-layer ReLU network. 

In their work which pulls together ideas from  \citet{Unser2017} and \citet{Ongie2020b}, PN restrict the domain of the test functions to the so-called Lizorkin functions 
\begin{align}
\Spc S_{\rm Liz}(\R^d)=\{\varphi \in \Spc S(\R^d): \int_{\R^d} \V x^{\V m} \varphi(\V x) \dint \V x=0, \forall \V m\in \N^d\},
\end{align} which are orthogonal to the polynomials. This choice is motivated by the property that 
the Radon transform is a homeomorphism
$\Op R: \Spc S_{\rm Liz}(\R^d) \to \Spc S_{\rm Liz, even}$, where
$\Spc S_{\rm Liz, even}$ denotes the even part of the Radon-domain Lizorkin space $\Spc S_{\rm Liz}(\R \times    \mathbb{S}^{d-1})$ with $\Op R^\ast \Op R \Op K_{\rm rad}=\Identity$ on $\Spc S_{\rm Liz, even}$, as well as on
$\Spc S'_{\rm Liz, even}$, by duality \citep{Helgason2011,Kostadinova2014}. 

While the adoption of this formalism  leads to a well-defined functional-optimization problem, PN's derivation/interpretation of Lemmas 17, 18, and 21  is flawed because they implicitly assume that there is a systematic, one-to-one association between
a ``concrete'' spline ridge  $\rho_{m}(\V \xi_0^\Top \V x-t_0)$  with $\rho_m(t)=\tfrac{1}{2}{\rm sign}(t)\frac{t^{m-1}}{m!}$ and some abstract Lizorkin distribution $\rho_{m}(\V \xi_0^\Top \V x-t_0)+\Spc P \in \Spc S_{\rm Liz}'(\R^d)$, which is unlikely to be the case for the reasons exposed in the introduction. We have recent evidence that such an association can be made, but that it requires a specific polynomial correction that depends on the shift $t_0$ \citep{Neumayer2022}.
%
That said, it remains that the main results and conclusions reported by 
\citet{Parhi2021b} 
are 
qualitatively correct 
and in agreement with Theorem \ref{Theo:ReLUSplines} (up to the mentioned technicalities). Also, the mathematical arguments proposed by these authors can easily be corrected/upgraded by extending their space of test functions to $\Spc S_{\rm Rad}$ and by using our results in  Theorems \ref{Theo:Complementedspaces}
and \ref{Theo:Extreme0}. 
The same holds true for PN's higher-order generalizations (ridge splines). In fact, PN's definition of the native space of the $m$th-order ridge splines is equivalent to that of $\Spc M_{\Op \Delta_{\rm R}}(\R^d)$ for $m=2$.


Now, the one aspect where Theorem \ref{Theo:ReLUSplines} improves upon 
\citet[Theorem 1]{Ongie2020b} and \citet[Theorem 1 with $m=1$]{Parhi2021b} is that it contains the characterization of the full solution set. The theorem tells us that {\em all extreme points} of the optimization problem in \eqref{Eq:ReLUEnergy} have the same parametric form \eqref{Eq:ExtremesplineReLU}, which is a much stronger statement than the existence of one such neural-network-like solution. Ideally, one would like to identify the sparsest solution within the solution set, in other words, the one with the fewest neurons. While there is a direct algorithm that will find the sparsest solution for $d=1$ \citep{Debarre2020}, it is not known yet if this can be generalized to a larger number of dimensions.

We also like to point out that the functional-learning problem in \eqref{Eq:ReLUEnergy} is invariant to similarity transformations of the data points $\V x_m$. This is to say that any such transformation of the data characterized by a scale $s$ can be accounted for via a proper rescaling of the regularization parameter $\lambda \to s \lambda$.

\begin{proposition}
\label{Prop:Similarity}
The regularization functional in \eqref{Eq:ReLUEnergy} is translation-, scale- and rotation-invariant in the sense that
\begin{align}
\|\Delta_{\rm R}\{f(s\M U \cdot -\M b)\}\|_{\Spc M}=s \|\Delta_{\rm R}\{f\}\|_{\Spc M}
\end{align}
for any $f \in \Spc M_{\Delta_{\rm R}}(\R^d)$, and any scaling factor $s \in \R^+$, offset $\M b \in \R^d$, and rotation matrix
$\M U \in \R^{d \times d}$ with $\M U^\Top\M U=\M I$.
\end{proposition}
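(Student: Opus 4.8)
The plan is to establish the invariance by tracking how the composite operator $\Delta_{\rm R} = \Op K_{\rm rad}\Op R\,\Delta$ interacts with a similarity transformation of the signal-domain argument, and then to verify that the resulting change of variables in the Radon domain leaves the $\Spc M$-norm unchanged up to the single scale factor $s$. I would first reduce to the three elementary transformations separately, since a general similarity $\V x \mapsto s\M U\V x - \M b$ factors as a translation, a rotation, and a dilation, and the $\Spc M$-norm identity is multiplicative/additive in the obvious way under composition.

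First I would handle translation and rotation, which I expect to contribute no scale factor. For translation $f \mapsto f(\cdot - \M b)$, I would use the commutation of $\Delta$ with shifts together with the intertwining property \eqref{Eq:CommutRad1}: since $\Delta$ is LSI, $\Delta\{f(\cdot-\M b)\} = (\Delta f)(\cdot - \M b)$, and applying $\Op R$ shifts the Radon variable, $\Op R\{g(\cdot-\M b)\}(t,\V\xi) = \Op R\{g\}(t - \V\xi^\Top\M b, \V\xi)$. Because $\Op K_{\rm rad}$ acts by convolution along $t$ it commutes with this $t$-shift, so $\Delta_{\rm R}\{f(\cdot-\M b)\}$ is merely a $\V\xi$-dependent translate of $\Delta_{\rm R} f$ in the $t$ variable, whose total-variation mass is unchanged. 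For rotation, isotropy of $\Delta$ (hence of $\Op K$) is the key: $\Delta$ commutes with $\M U$, and $\Op R\{f(\M U\cdot)\}(t,\V\xi) = \Op R\{f\}(t,\M U^\Top\V\xi)$, so the transformation acts as a measure-preserving relabeling of the sphere $\mathbb S^{d-1}$, again leaving $\|\cdot\|_{\Spc M}$ invariant.

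The scale factor therefore comes entirely from the dilation $f\mapsto f(s\,\cdot)$, and this is the computation I would do carefully. Under $\bx\mapsto s\bx$ one has $\Delta\{f(s\cdot)\} = s^2 (\Delta f)(s\cdot)$, and the Radon transform scales as $\Op R\{g(s\cdot)\}(t,\V\xi) = s^{-(d-1)}\,\Op R\{g\}(st,\V\xi)$ from \eqref{Eq:Radon2}. The filtering operator $\Op K_{\rm rad}$, having frequency response $c_d|\omega|^{d-1}$, is homogeneous of degree $(d-1)$: a $t$-dilation by $s$ multiplies $\Op K_{\rm rad}\{h(s\cdot)\}$ by $s^{d-1}$ relative to $(\Op K_{\rm rad}h)(s\cdot)$. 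Combining the three factors $s^2$, $s^{-(d-1)}$, and $s^{d-1}$ should leave a single leftover power, and the $\Spc M$-norm of a $t$-dilate $h(s\cdot)$ contributes an additional $s^{-1}$ from the change of variables in the integral; the bookkeeping must collapse to exactly one net factor of $s$, matching the claimed $\|\Delta_{\rm R}\{f(s\cdot)\}\|_{\Spc M} = s\|\Delta_{\rm R}f\|_{\Spc M}$.

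\emph{The main obstacle} I anticipate is tracking the homogeneity degrees consistently, because three separate scaling laws ($s^2$ from the Laplacian, $s^{-(d-1)}$ from $\Op R$, $s^{d-1}$ from $\Op K_{\rm rad}$) together with the $s^{-1}$ Jacobian of the $\Spc M$-norm must cancel to a clean $s^1$, and a sign error in any exponent would spoil the dimension-independence of the result. The cleanest way to avoid this is to work entirely in the Fourier domain via Proposition \ref{Prop:IsoRad} and the Fourier-slice theorem \eqref{Eq:CentralSliceTheo}, where each operator acts diagonally and the dilation becomes an inverse dilation of frequency, so that the net homogeneity degree is read off directly. Once the three elementary cases are verified on the dense test space and the $\Spc M$-norm is seen to transform as stated, the general similarity result follows by composing the three factors, with only the dilation contributing the scale $s$.
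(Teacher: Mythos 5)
The paper states Proposition \ref{Prop:Similarity} without proof, so there is no reference argument to compare yours against; judged on its own merits, your proof strategy is correct and the part you were worried about---the homogeneity bookkeeping---does close. The factors you identify are all right: $s^2$ from $\Delta$, $s^{-(d-1)}$ from $\Op R$, $s^{d-1}$ from $\Op K_{\rm rad}$ acting along $t$, and $s^{-1}$ from the $\Spc M$-norm under a $t$-dilation, giving $s^{2-(d-1)+(d-1)-1}=s$ independently of $d$. Carrying the three transformations through the composite operator at once gives the cleaner distributional identity
\begin{align*}
\Delta_{\rm R}\{f(s\M U\cdot-\M b)\}(t,\V \xi)
= s^{2}\,(\Delta_{\rm R}f)\big(st-(\M U\V \xi)^\Top\M b,\ \M U\V \xi\big),
\end{align*}
and since the map $T:(t,\V \xi)\mapsto\big(st-(\M U\V \xi)^\Top\M b,\ \M U\V \xi\big)$ is a bijection of $\R\times\mathbb{S}^{d-1}$ that commutes with the antipodal symmetry $(t,\V \xi)\mapsto(-t,-\V \xi)$ (so it maps $\Spc M_{\rm Rad}=\Spc M_{\rm even}$ to itself, avoiding any equivalence-class ambiguity), precomposition with $T^{-1}$ is a bijection of the unit ball of $C_{0}(\R\times\mathbb{S}^{d-1})$; the norm identity then follows by duality, with the Jacobian of $T$ contributing the single factor $s^{-1}$.

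Two steps need repair, both fixable. First, your closing argument---``verify on the dense test space, then extend''---fails as stated: smooth functions are \emph{not} norm-dense in $\Spc M_{\Delta_{\rm R}}(\R^d)$, for the same reason that $\Spc S$ is not $\|\cdot\|_{\Spc M}$-dense in $\Spc M$ (its closure is only $L_1$; a Dirac is at total-variation distance $2$ from every absolutely continuous measure), so no density-plus-continuity argument can reach a general $f$ in the native space. The fix is to avoid density altogether: the commutation and scaling relations you use hold directly for tempered distributions via the duality definitions of Section \ref{Sec:RadonDistributions}, so the displayed identity is valid for every $f\in\Spc M_{\Delta_{\rm R}}(\R^d)$ and no limiting argument is needed. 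Second, and relatedly, $\Delta_{\rm R}f$ is in general a measure, not a function, so the ``change of variables in the integral'' that produces the $s^{-1}$ must be phrased by duality (invariance of the sup in \eqref{Eq:Dualnorm} under precomposition of test functions with $T$), not as a pointwise Jacobian computation; the factor is the same, but the integral manipulation as written is not meaningful for a general element of $\Spc M(\R\times\mathbb{S}^{d-1})$.
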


There has been concern 
about the well-posedness of the generative model used in \citep{Parhi2021b} and summarized by $f=\Delta_{\rm R}^{\dagger}\{w\}$ with $w \in \Spc M_{\rm Rad}=\Spc M_{\rm even}$ in the present formulation.
To make the link with \citep{Bartolucci2021}, it is instructive to describe our inverse operator $\Delta_{\rm R}^{\dagger}=(\Identity -\Proj_{\Spc P_1}) \Delta^{-1}\Op R^\ast: \Spc M_{\rm Rad} \to 
\Spc M_{\Delta_{\rm R}}(\R^d)$ explicitly in terms of the generic integral equation
\begin{align}
\label{Eq:IntOp}
\Delta_{\rm R}^{\dagger}\{w\}(\V x)=\int_{\R} \int_{\mathbb{S}^{d-1}} h(\V x; t,\V \xi) w(t, \V \xi) \dint \V \xi \dint t,
\end{align}
with a ``kernel'' $h: \R^d \times (\R \times \mathbb{S}^{d-1}) \to \R$ that is given by
\begin{align}
\label{Eq:Radkernel}
h(\V x; t,\V \xi)&= \tfrac{1}{2}|\V \xi^\Top\V x-t| -\sum_{|\V k|\le1} \frac{\V x^\V k}{\V k!}q_\V k(t,\V \xi),
\end{align}
where $q_\V k(t,\V \xi)=\langle  \tfrac{1}{2}|\V \xi^\Top\cdot -t|,m_\V k^\ast \rangle$ and where the dual basis $(m^\ast_\V k)_{|\V k|\le 1}$ is specified by 
\eqref{Eq:Dualbasis}. 
This form is compatible with \citep[Lemma 21]{Parhi2021b} and has been interpreted as an infinite-width neural network. By evaluating the underlying duality products explicitly and by regrouping the first-order correction terms, we obtain the simplified formula
\begin{align}
\label{Eq:Radkernel2}
h(\V x; t,\V \xi)&=\tfrac{1}{2}|t-\V \xi^\Top\V x| - (\kappa_{\rm rad}\ast \tfrac{1}{2}|\cdot|)(t)  + (\V \xi^\Top\V x)(\kappa_{\rm rad}\ast \tfrac{1}{2}{\rm sign})(t),
\end{align}
where the radial convolution with $\kappa_{\rm rad}$ has a mollifying effect on the two correction terms. 
If we fix $t,\V \xi$ in \eqref{Eq:Radkernel2}, then $\V x \mapsto h(\V x; t, \V \xi)$ grows like $O(|\V \xi^\Top \V x|)$ with the correction contributing a first-order polynomial. The effect of the correction is more remarkable along the radial variable $t$, in that it neutralizes the growth of the leading term $\tfrac{1}{2}|t-\V \xi^\Top\V x|=O(|t|)$: This is because the profile $t \mapsto h(\V x; t, \V \xi)$ is bounded with a maximum proportional to $|\V \xi^\Top  \V x|$ around the origin, continuous (due to the convolution of ${\rm sign}$ with $\kappa_{\rm rad}$), and vanishing towards $0$ as $t\to\pm \infty$.
This can be translated into the following 
properties, which are the mathematical bases for the present construction.
\begin{enumerate}
\item For any $\V x_0 \in \R^d$, $h(\V x_0; \cdot,\cdot)=\Delta_{\rm R}^{\dagger\ast}\{\delta(\cdot-\V x_0)\} \in C_0(\R \times \mathbb{S}^{d-1})$, which is equivalent 
to the weak* continuity of the sampling functionals $\delta(\cdot-\V x_0): f \mapsto f(\V x_0)$; that is,
$\delta(\cdot-\V x_0)\in C_{0,\Delta_{\rm R}}(\R^d)=\Delta_{\rm R}^{\ast}\big(C_{0,\rm Rad}
\big)\oplus \Spc P_1' $ with $\Spc P'_1={\rm span}\{m^\ast_\V k\}_{|\V k|\le 1}$, where $C_{0,\Delta_{\rm R}}$ is the predual of $\Spc M_{\Delta_{\rm R}}$. 
\item Stability: the integral operator \eqref{Eq:IntOp} satisfies the bound
\begin{align}
\big|\Delta_{\rm R}^{\dagger}\{w\}(\V x)\big|\le  \|h(\V x; \cdot,\cdot)\|_{L_\infty} \; \|w\|_{\Spc M} \le C (1 + \|\V x\|) \; \|w\|_{\Spc M(\R \times \mathbb{S}^{d-1})},
\end{align}
which ensures that $\Delta_{\rm R}^{\dagger}\{w\}(\V x)$ is well-defined (and continuous) for any $\V x \inR^d$ and 
$w \in \Spc M_{\rm even}(\R \times \mathbb{S}^{d-1})$. 
\end{enumerate}
Note that one can also take $w \in \Spc M(\R \times \mathbb{S}^{d-1})$ in \eqref{Eq:IntOp} since the null space of $\Delta_{\rm R}^{\dagger}: \Spc M(\R \times \mathbb{S}^{d-1}) \to \Spc M_{\Delta_{\rm R}}(\R^d)$ \big(and of $\Op R^\ast: \Spc M(\R \times \mathbb{S}^{d-1}) \to \Spc S'(\R^d)$\big) is precisely the complementary space $\Spc M_{\rm Rad^\perp}=\Spc M_{\rm odd}(\R \times \mathbb{S}^{d-1})$ (see Theorem \ref{Theo:Complementedspaces}, Item 6).

An alternative to \eqref{Eq:Radkernel2} is to consider a ``tempered'' kernel of the form
$h_{\beta}(\V x; t,\V \xi)= \frac{1}{2}|\V \xi^\Top\V x-t|\, \beta(t)$, where $\beta(t)>0$ is a weighting function (e.g., $\beta(t)= \frac{1}{1 + |t|^{2+\epsilon}})$ that compensates the linear growth of the first factor\footnote{The first factor can also be  replaced by $(\V \xi^\Top\V x-t)_+$ modulo some adjustments in $\beta$, as shown by the authors. 
}\citep{Bartolucci2021}. In effect, this mechanism, whose stability is intrinsically guaranteed, reduces the size of the native space. Interestingly, the corresponding optimization problem admits the same form of solution---a neural network with one hidden ReLU layer  \citep{Bartolucci2021}---with the caveat that the underlying regularization is no longer translation-invariant. In this modified scenario, the optimal cost is $\|\Delta_{\rm R} f_{\rm ridge}\|_{\Spc M_{1/\beta}}=\sum_{n=1}^{K_0}|a_k| \frac{1}{|\beta(\tau_k)|}$, which tends to favor smaller biases $\tau_k$. This also means that one then looses the invariance to similarity transformations of the data (Proposition \ref{Prop:Similarity}).

\appendix
\section{Direct-Sum Topologies}
\label{Sec:DirectSums}
There are two standard ways to define direct sums: explicitly, via the use of projectors; or abstractly, via the use of quotient spaces. The two methods are equivalent whenever one can explicitly identify the underlying quotient space as a (concrete) complemented subspace of the original space.
\subsection{Projectors}
Let $\Spc X$ be a topological vector space.
A continuous linear operator $\Op P : \Spc X \to \Spc X$ with the property that $\Op P =\Op P \circ \Op P=\Op P^2$ (idempotence) on $\Spc X$ is called a projection operator \citep[p 480]{Dunford1988}. In particular, when $\Spc X$ is a Banach space or a Fréchet space, the range $\Spc U=\Op P(\Spc X)$ of $\Op P$ is necessarily a closed subspace of $\Spc X$.
In that case, $\Op P=\Proj_{\Spc U}$ is a projector from $\Spc X$ onto $\Spc U$, and $\Spc X=\Spc U\oplus \Spc V$ where $\Spc V={\rm ker}\Op P$ is the null space of $\Op P$ or, equivalently, the range of the complementary projector $\Proj_\Spc V=(\Identity-\Op P)$.

More generally, when $\Spc X$ is a topological vector space, the space $\Op P(\Spc X)$ equipped with the topology induced by $\Spc X$ is a topological space as well, with the same properties as the original space $\Spc X$ (e.g., completeness). Likewise, if $(\Spc X, \Spc X')$ is a dual pair of topological spaces, then so is $\Big(\Op P(\Spc X), \Op P^\ast(\Spc X')\big)$,
where $\Op P^\ast: \Spc X' \to \Spc X'$ is the dual projection operator.

\subsection{Direct Sums}

{1) Direct-sum decomposition of a vector space $\Spc X$}: Let  $\Spc U$ and $\Spc V$ be two (complementary) closed subspaces of $\Spc X$. 
The notation $\Spc X= \Spc U \oplus \Spc V$  indicates that every element $x\in \Spc X$ has a unique decomposition as $x=u+v$
with $(u,v) \in \Spc U \times \Spc V$.
The underlying projection operators are
\begin{align*}
x=u+v &\mapsto \Proj_{\Spc U}\{x\}=u\\
x&\mapsto \Proj_{\Spc V}\{x\}=(\Identity- \Proj_{\Spc U})\{x\}=v.
\end{align*}
To summarize, given a (closed) subspace $\Spc U$ of a normed vector space $\Spc X$, the search of a complement $\Spc V$ for $\Spc U$ in $\Spc X$ is equivalent to the search of a (continuous) projection operator $\Op P$ on $\Spc X$ (with $\Op P^2=\Op P$) whose range is $\Spc U$. Then, $\Spc V=\Proj_{\Spc V}(\Spc X)$ with $\Proj_{\Spc V}=(\Identity-\Op P)$.
\\[2ex]
2) Annihilator: Let $\Spc U$ be a closed subset of $\Spc X$. One then defines $\Spc U^\perp$ as the annihilator of $\Spc U$ in $\Spc X'$, which is the subset
$$
\Spc U^\perp=\{f \in \Spc X': \langle f, u\rangle=0 \mbox{ for all } u \in \Spc U\} \subseteq \Spc X'.
$$
3) Dual space: 
The dual of the direct sum $\Spc X= \Spc U \oplus \Spc V$ is $\Spc X'= \Spc U' \oplus \Spc V'$, where $\Spc U'=\Op P^\ast(\Spc X')=\Spc V^\perp$ and
$\Spc U'=\Spc U^\perp$.
\\[2ex]
4) Quotient space: 
Under the assumption that $\Spc V$ is a closed subset of $\Spc X$, one defines the quotient space $\Spc X/\Spc V$ whose
 elements are equivalence classes (or cosets) denoted by
$[x]=x + \Spc V$.
The corresponding quotient map $q: \Spc X \to \Spc X/\Spc V$
is linear and its kernel (null space) is $\Spc V$. 
When $\Spc X$ is a Banach space, the quotient norm is
$$
\|[x]\|_{\Spc X/\Spc V}= \inf_{v\in \Spc V}\|x+v\|_{\Spc X},
$$
which measures the distance from $x$ to $\Spc V$. The quotient space $\Spc X/\Spc V$ equipped with the quotient norm is a Banach space as well. Moreover, there is a natural isomorphism between $(\Spc X/\Spc V)'$ (the dual of the quotient of $\Spc X$ by $\Spc V$) and $\Spc V^\perp$ (the annihilator of $\Spc V$ in $\Spc X'$), so that $(\Spc X/\Spc V)'\embedC \Spc X'$.

Also of relevance is the property that the bounded operators on $\Spc X$ that annihilate the elements of $\Spc V$ ``factor through" $\Spc X/\Spc V$.
Let $\Op T: \Spc X \to \Spc Y$ with $\Spc V \subseteq {\rm ker}\Op T$. Then, there exists a unique linear operator $\Op T_{q}: \Spc X/\Spc V \to \Spc Y$ such that
$\Op T_{q}q(x)=\Op T(x)$ and $\|\Op T_{q}\|=\|\Op T\|$.

The kernel of any bounded operator $\Op T: \Spc X \to \Spc Y$ is a closed subspace of $\Spc X$ \citep[Proposition 4.2, p. 172]{Markin2020}. Hence,
the quotient space $\Spc X/{\rm ker}(\Op T)$ is a vector space that is isomorphic to $\Op T(\Spc X)$.

\section{Extreme Points}
\label{Sec:ExtremePoints}
\begin{definition}[Extreme Points]
Let $C$ be a convex set of a Banach space $\Spc X$. The extreme points of $C$ are the points 
$x\in C$ such that, if there exist $x_1,x_2 \in C$ and $\theta \in (0,1)$ such that $x= \theta x_1 + (1- \theta)x_2$, then it necessarily holds that $x_1 = x = x_2$. The set of these extreme points is denoted by ${\rm Ext}(C)$.
\end{definition}

We now present a classical 
result that gives the explicit form of the extreme points of the dual $\Spc X'$
of any closed subspace $\Spc X \subseteq C(\Spc Z)$, where $C(\Spc Z)$ is the space of continuous functions $z\mapsto f(z)$ on some compact Hausdorff space $\Spc Z$ equipped with the norm $\|f\|=\sup_{z\in \Spc Z}  |f(z)|$.

\begin{lemma}[{\citep[p. 441]{Dunford1988}}]
\label{Theo:ExtremeSubspace}
Let $\Spc X$ be a closed linear manifold of the Banach space $C(\Spc Z)$ of all real continuous functions on the compact Hausdorff space $\Spc Z$. For each $z \in \Spc Z$, let the evaluation functional $e_z \in \Spc X'$ be defined by
\begin{align}
\langle e_z, f\rangle=f(z),\quad  f\in \Spc X.
\end{align}
Then, every extreme point of the closed unit ball in $\Spc X'$,
\begin{align}
B_{\Spc X'}=\{x^\ast \in \Spc X': 
\|x^\ast\|_{\Spc X'}= \sup_{f \in \Spc X: |f(z)|\le 1} \langle x^\ast, f \rangle \le 1\},
\end{align} 
is of the form $\pm e_z$ with $z \in \Spc Z$. If $\Spc X=C(\Spc Z)$, then the converse is true as well; that is,
${\rm Ext} B_{\Spc M(\Spc Z)}=\{\pm e_z: z \in \Spc Z\}$ with $\Spc M(\Spc Z)=\big(C(\Spc Z)\big)'$.
\end{lemma}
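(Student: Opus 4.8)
The plan is to identify the unit ball $B_{\Spc X'}$ as the weak* closed convex hull of the symmetric set $E=\{\pm e_z: z\in\Spc Z\}$ and then extract the extreme points via Milman's partial converse to the Krein--Milman theorem. First I would observe that, by the Banach--Alaoglu theorem, $B_{\Spc X'}$ is weak* compact and convex, so Krein--Milman guarantees that it has extreme points and coincides with the weak* closed convex hull of ${\rm Ext}\,B_{\Spc X'}$. Next I would note that the map $\Spc Z\to\Spc X'$, $z\mapsto e_z$ is weak* continuous, since for every fixed $f\in\Spc X$ the scalar map $z\mapsto\langle e_z,f\rangle=f(z)$ is continuous on $\Spc Z$ by the very definition of $C(\Spc Z)$; as $\Spc Z$ is compact, both $\{e_z\}$ and $\{-e_z\}$, hence $E$, are weak* compact. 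Each $e_z$ satisfies $\|e_z\|_{\Spc X'}=\sup_{\|f\|\le1}|f(z)|\le1$, so $E\subseteq B_{\Spc X'}$ and therefore $\overline{\rm co}^{\,w*}(E)\subseteq B_{\Spc X'}$.

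The crux is the reverse inclusion $B_{\Spc X'}\subseteq\overline{\rm co}^{\,w*}(E)$, which I would establish by Hahn--Banach separation in the weak* topology. Supposing some $x^*\in B_{\Spc X'}$ lies outside the weak* closed convex set $\overline{\rm co}^{\,w*}(E)$, the separating functional may be taken in the predual $\Spc X$ (the continuous dual of $\Spc X'$ under the weak* topology is precisely $\Spc X$), so there exist an $f\in\Spc X$ and a real $\alpha$ with $\langle x^*,f\rangle>\alpha\ge\langle g,f\rangle$ for all $g\in E$. Taking $g=\pm e_z$ yields $\pm f(z)\le\alpha$ for every $z$, that is $\|f\|_\infty\le\alpha$; but then $\langle x^*,f\rangle\le\|x^*\|_{\Spc X'}\|f\|_\infty\le\alpha$, a contradiction. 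Hence $B_{\Spc X'}=\overline{\rm co}^{\,w*}(E)$, and since $E$ is already weak* compact, Milman's theorem gives ${\rm Ext}\,B_{\Spc X'}\subseteq E=\{\pm e_z: z\in\Spc Z\}$, which is the first assertion.

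For the converse when $\Spc X=C(\Spc Z)$ (so that $\Spc X'=\Spc M(\Spc Z)$ and $e_z=\delta_z$), I would verify directly that each $\pm\delta_z$ is extreme. Writing $\delta_z=\theta\mu_1+(1-\theta)\mu_2$ with $\mu_1,\mu_2\in B_{\Spc M(\Spc Z)}$ and $\theta\in(0,1)$, a comparison of total masses, $1=\delta_z(\Spc Z)=\theta\,\mu_1(\Spc Z)+(1-\theta)\,\mu_2(\Spc Z)\le\theta\|\mu_1\|+(1-\theta)\|\mu_2\|\le1$, forces equality throughout; via the Jordan decomposition this shows each $\mu_i$ is a probability measure. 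Then for any open $U\subseteq\Spc Z\setminus\{z\}$ the relation $0=\delta_z(U)=\theta\,\mu_1(U)+(1-\theta)\mu_2(U)$ with nonnegative summands gives $\mu_1(U)=\mu_2(U)=0$, so both $\mu_i$ are supported on $\{z\}$ and thus equal $\delta_z$. The case $-\delta_z$ is identical, yielding ${\rm Ext}\,B_{\Spc M(\Spc Z)}=\{\pm\delta_z\}$.

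The step I expect to be the main obstacle is the separation argument identifying $B_{\Spc X'}$ with $\overline{\rm co}^{\,w*}(E)$: it requires care that the separating functional genuinely lives in the predual $\Spc X$ rather than in $\Spc X''$, which is exactly what lets the bound $\|f\|_\infty\le\alpha$ translate into $\langle x^*,f\rangle\le\alpha$ through $\|x^*\|\le1$. Everything else reduces to standard compactness (Banach--Alaoglu), the Krein--Milman and Milman machinery, and the elementary measure-support computation.
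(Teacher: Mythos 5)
Your proof is correct. The first half is essentially the paper's own argument: you identify $B_{\Spc X'}$ with the weak* closed convex hull of $E=\{\pm e_z: z\in\Spc Z\}$ by Hahn--Banach separation, with the crucial observation (shared by the paper) that the separating functional can be taken in the predual $\Spc X$, so that the bound $\|f\|_\infty\le\alpha$ collides with $\|x^\ast\|_{\Spc X'}\le 1$; the paper phrases the same estimate contrapositively as $\|x^\ast\|_{\Spc X'}>1$ for $x^\ast\notin{\rm cch}\,E$. Both proofs then finish with Milman's theorem applied to the weak*-compact set $E$ (you justify this compactness via the weak* continuity of $z\mapsto e_z$, which the paper leaves implicit). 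Where you genuinely diverge is the converse for $\Spc X=C(\Spc Z)$: you verify \emph{directly} that each $\pm\delta_z$ is extreme, by showing that any decomposition $\delta_z=\theta\mu_1+(1-\theta)\mu_2$ forces the $\mu_i$ to be probability measures (total-mass/Jordan argument) supported on $\{z\}$, hence equal to $\delta_z$. The paper instead shows that a unit-norm measure whose support contains two distinct points splits as $t\lambda+(1-t)\nu$ by restriction to disjoint neighborhoods and so is not extreme; but that argument establishes (again) the inclusion ${\rm Ext}\,B_{\Spc M(\Spc Z)}\subseteq\{\pm\delta_z\}$, i.e., the contrapositive of the forward direction, whereas the claimed equality also requires $\{\pm\delta_z\}\subseteq{\rm Ext}\,B_{\Spc M(\Spc Z)}$. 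Your direct verification supplies exactly that inclusion, so on the converse your route is not merely different but more complete relative to the stated claim.
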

Lemma \ref{Theo:ExtremeSubspace} generalizes to $C_0(\Spc Z)$, where $\Spc Z$ is a locally compact Hausdorff space, which covers the case that is of interest to us: 
$\Spc Z=\R \times \mathbb{S}^{d-1}$.

\begin{proof}
Let $E$ be the set of all points in $\Spc X'$ of the form $\pm e_z$ with $z \in \Spc Z$.
The space $\Spc X'$ is equipped with its weak$\ast$ (or $\Spc X$) topology for the Krein-Milman theorem to apply.
As $\|e_z\|_{\Spc X'}\le 1$, $E \subseteq B_{\Spc X'}$. Since $B_{\Spc X'}$ is convex,  weak*-compact and, hence, weak*-closed, the inclusion also holds for the closed convex hull, with ${\rm cch}E \subseteq B_{\Spc X'}$. 
Next, we invoke a variant of the Hahn-Banach theorem \citep[Theorem 3.5, p. 59]{Rudin1991}.
For any $x^\ast \notin {\rm cch}E$, there exists a linear functional $f \in \Spc X$ that separates $x^\ast \in \Spc X'$ from the closed convex set ${\rm cch}E$. 
This means that there are a constant $c>0$ and some $\epsilon>0$ such that
$$
\pm f(z) 
 \le c -\epsilon < c \le \langle x^\ast,f\rangle  $$ 
for all $z \in \Spc Z$. 
Hence, $\|f\|\le (c - \epsilon)$ which, when combined with $\|x^\ast\|_{\Spc X'} \|f\| \ge c$, gives that  $\|x^\ast\|_{\Spc X'}>1$. Thus, ${\rm cch}E\supseteq B_{\Spc X'}$, from which we conclude that
${\rm cch}E=B_{\Spc X'}$. Finally, since $E$ is compact, the extreme points of ${\rm cch}E$ necessarily lie in $E$ (see \citep[Milman's theorem, p. 76]{Rudin1991}).

For the converse implication, we
invoke the Riesz-representation theorem, which allows us to represent any unit-norm functional on 
$C_0(\Spc Z)$ by a real-valued measure $\mu\in \Spc M(\Spc Z)$ of total variation $1$. If the support of $\mu$ 
consists of one point, it is a signed multiple of a Dirac mass. Otherwise, ${\rm supp}\mu$ contains two distinct points $z_1\ne z_2$. Let $U,V\subset \Spc Z$
 be disjoint neighborhoods of $z_1$ and $z_2$.
By the definition of the support, $|\mu|(U)>0$ and $|\mu|(V)>0$. Define
$t=|\mu|(U)$, which is such that $0<t<1$. Now let $\lambda=t^{-1} 
\mu\vert_{U}$
and $\nu=(1-t)^{-1}\mu\vert_{U^c}$.
Then, both $\lambda$ and $\nu$ are unit-norm functionals and
$\mu=t \lambda+(1-t)\nu$, which proves that $\mu$ is not extreme.
\end{proof}

\subsection*{Acknowlegdments}
The research was partially supported by the Swiss National Science
Foundation under Grant 200020-184646.
The author is thankful to Sebastian Neumayer and Shayan Aziznejad for very helpful discussions.

%
%
%
\bibliography{/Users/unser/MyDrive/Bibliography/Bibtex_files/Unser.bib}


\end{document}